\newtheorem{theorem}{Theorem}[section]
\newtheorem{lemma}[theorem]{Lemma}
\newtheorem{observation}[theorem]{Observation}
\newtheorem{proposition}[theorem]{Proposition}
\newtheorem{corollary}[theorem]{Corollary}
\theoremstyle{definition}
\newtheorem{definition}[theorem]{Definition}
\newcommand{\ZnZ}{\mathbb{Z}/n\mathbb{Z}}
\newcommand{\dR}{\mathbb{R}}
\newcommand{\dN}{\mathbb{N}}
\newcommand{\OptVal}{\operatorname{OptVal}}
\newcommand\DoubleLine[7][3pt]{%
	\path(#2)--(#3)coordinate[at start](h1)coordinate[at end](h2);
	\draw[#4]($(h1)!#1!90:(h2)$)-- node [auto=left] {#5} ($(h2)!#1!-90:(h1)$); 
	\draw[#6]($(h1)!#1!-90:(h2)$)-- node [auto=right] {#7} ($(h2)!#1!90:(h1)$);
}
\definecolor{darkred}{rgb}{0.70,0.00,0.12}
\definecolor{lightgrey}{rgb}{0.8,0.8,0.8}
\definecolor{lightblue}{rgb}{0.8,0.8,1}
\definecolor{darkblue}{rgb}{0.00,0.12,0.70}
\definecolor{darkgreen}{rgb}{0.00,0.30,0.05}
\definecolor{lightgreen}{rgb}{0.23,0.7,0.0}
\definecolor{lightpurple}{rgb}{1.0,0.1,1.0}
\definecolor{forestgreen}{rgb}{0.08,0.50,0.00}
\definecolor{limegreen}{rgb}{0.71,0.90,0.11}
\title{The Price of Symmetric Line Plans in the Parametric City}
\author{%
    \begin{tabular}{ccc}
        Berenike Masing\footnote{Zuse Institute Berlin, Takustr.\ 7, 14195 Berlin, Germany} \footnote{Funded by the Deutsche Forschungsgemeinschaft (DFG, German Research Foundation) under Germany's Excellence Strategy – The Berlin Mathematics Research Center MATH+ (EXC-2046/1, project ID: 390685689).} &
        Niels Lindner\footnotemark[1] &
        Ralf Borndörfer\footnotemark[1] \\
        \texttt{masing@zib.de} &
        \texttt{lindner@zib.de} &
        \texttt{borndoerfer@zib.de} 
    \end{tabular}
}
\date{\today}
\begin{document}

\maketitle

\begin{abstract}
    \noindent We consider the line planning problem in public transport in the Parametric City, an idealized model 
    that captures 
    typical scenarios by a (small) number of parameters. The Parametric City is rotation symmetric, but 
    optimal line plans are not always symmetric. This raises the question to quantify the symmetry gap between the best symmetric and the overall best solution. For our analysis, we formulate the line planning problem as a mixed integer linear program, that can be solved in polynomial time if the solutions are forced to be symmetric. The symmetry gap is provably small when a specific Parametric City parameter is fixed, and we give an approximation algorithm for line planning in the Parametric City in this case. While the symmetry gap can be arbitrarily large in general, we show that symmetric line plans are a good choice in most practical situations.
\end{abstract}

\section{Introduction}

The overall goal of public transport planning is to satisfy the demand providing a user-friendly service, while limiting the operator's cost,
see \citet{CEDER1986}, \citet{Assad}, and \citet{Bussieck}. 
An overview tailored to line planning in particular is given by \citet{Schoebel:overview}. \citet{Karbstein:Thesis} as well as \citet{Schmidt} consider passenger-oriented models, which focus on reducing transfers. 
The basic transit network design problem, which can include line planning, is surveyed by \citet{KepaptsoglouKarlaftis} and  \citet{Lopez}. Most recent approaches focus on representing a specific city as realistically as possible by tailor made ``irregular networks''. This produces the best possible results for the moment of consideration, but it makes generalization, benchmarking, and extrapolation hard, as network structures do not easily carry over to other cities, and not even to the same city twenty years in the future. An alternative approach is to consider \emph{generic city structures}, that reflect the most prevalent economical and spatial aspects, but remain simple enough to be analyzed and understood. Well organized prototypcial (or even standardized) transportation networks could then be used in similar cities. Two examples are Manhattan type grids that were considered by  \citet{Holroyd}, 
and ring-radial models as studied by \citet{BYRNE197597} and more recently by \citet{Badia}. There have also been hybrid approaches, e.g., \citet{Daganzo} combines a grid-like model at the center with a hub-and-spoke structure surrounding it.
A versatile model, and the one that we consider in this paper, is the so-called Parametric City by \citet{Fielbaum:ModelIntro}. It provides an idealized representation of a city that can be adapted to different situations by a flexible choice of parameters.


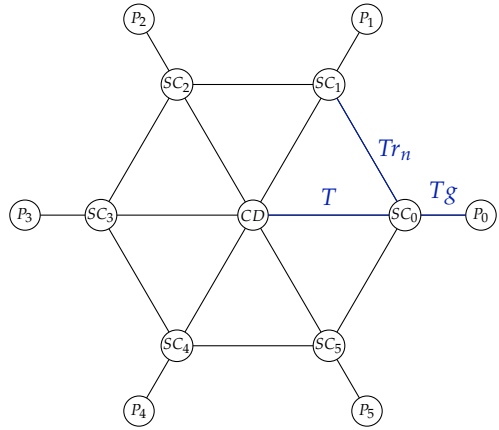
\begin{figure}[ht]
    \centering
\begin{tikzpicture}[myn/.style={draw,circle,fill=none, font=\footnotesize, outer sep=0pt, inner sep=0pt,minimum size=0.4cm}]
\tikzmath{\n = 6; \nm1 = \n-1; \nm2 = \n-2; \T = 2; \gT = \T*(1+0.5);}
			\node[myn] (CD) at (0,0) {\tiny{$CD$}};
			\foreach \x in {0,...,\nm1}{
				\node[myn] (SC\x) at ({\x*360/\n}: \T cm) {\tiny{$SC_{\x}$}};
			}
			
			\foreach \x in {0,...,\nm1}{
				\node[myn] (P\x) at ({\x*360/\n}: \gT cm) {\tiny{$P_{\x}$}};
			}
			\foreach \x in {0,...,\nm1}{%
			
					\draw(SC\x) -- (CD);
					\draw(SC\x) -- (P\x);
				
				}
				
			\foreach \x in {0,...,\nm1}{%
			    \pgfmathtruncatemacro {\y}{mod(round (1+\x),\n)}
			    \draw(SC\x) -- (SC\y);
				}
				
		\draw[darkblue] (CD) -- node[midway, above, font = \footnotesize] {$T$} (SC0);
		\draw[darkblue] (SC0) -- node[midway, right, font = \footnotesize] {$Tr_n$} (SC1);
		\draw[darkblue] (SC0) -- node[midway, above, font = \footnotesize] {$Tg$} (P0);

\end{tikzpicture}
\caption{The infrastructure graph of the Parametric City for $n = 6$ }
\label{fig:exampleParametricCity}
\end{figure}

\begin{figure}[ht]
\begin{subfigure}{.5\textwidth} 
\begin{tikzpicture}[myn/.style={draw,circle,fill=none, font=\footnotesize, outer sep=2pt, inner sep=0pt,minimum size=0.4cm},
dot/.style={circle, draw, fill=black, inner sep=0pt, minimum width=3pt, outer sep=5pt},]
\tikzmath{\n = 8; \nm1 = \n-1; \nm2 = \n-2; \T = 2; \gT = \T*(1+1/2);}
			\node[myn]
			(CD) at (0,0) {\tiny{$CD$}};
			\foreach \x in {0,...,\nm1}{
				\node[myn]
				(SC\x) at ({\x*360/\n}: \T cm) {\tiny{$SC_{\x}$}};
			}
			
			\foreach \x in {0,...,\nm1}{%
				\node[myn]
				(P\x) at ({\x*360/\n}: \gT cm) {\tiny{$P_{\x}$}};
			}
			\foreach \x in {0,...,\nm1}{%
					
				\DoubleLine{SC\x}{P\x}{<-,blue}{}{-> ,blue}{};
				
				}
				
			\foreach \x in {1,...,\nm1}{%
			    \pgfmathtruncatemacro {\y}{mod(round (1+\x),\n)}
			    	\draw[cyan,->] (SC\x) -- (SC\y);
			    	}

			\draw[cyan,->] (SC0) -- (CD); 
			\draw[cyan,->] (CD) to (SC1);
        \node[align=center, font=\footnotesize,rectangle,draw] (legend) at (0 , -\gT -1) {
        Frequencies: \\$\textcolor{blue}{\rightarrow,} \textcolor{cyan}{\rightarrow} 1$  };
\end{tikzpicture}
    \label{fig:ExtremeAsymmetricLinePlan}
\end{subfigure}%
\begin{subfigure}{.5\textwidth} 
\begin{tikzpicture}[myn/.style={draw,circle,fill=none, font=\footnotesize, outer sep=2pt, inner sep=0pt, minimum size=0.4cm},
dot/.style={circle, draw, fill=black, inner sep=0pt, minimum width=3pt, outer sep=5pt},]
\tikzmath{\n = 8; \nm1 = \n-1; \nm2 = \n-2; \T = 2; \gT = \T*(1+1/2);}
			\node[myn]
			(CD) at (0,0) {\tiny{$CD$}};
			\foreach \x in {0,...,\nm1}{
				\node[myn]
				(SC\x) at ({\x*360/\n}: \T cm) {\tiny{$SC_{\x}$}};
			}
			
			\foreach \x in {0,...,\nm1}{%
				\node[myn]
				(P\x) at ({\x*360/\n}: \gT cm) {\tiny{$P_{\x}$}};
			}
			\foreach \x in {0,...,\nm1}{%
					
				\DoubleLine{SC\x}{P\x}{<-,blue}{}{-> ,blue}{};
				
				}
				
			\foreach \x in {0,...,\nm1}{%
			    \pgfmathtruncatemacro {\y}{mod(round (1+\x),\n)}
			    \DoubleLine{SC\x}{SC\y}{<-,lightpurple}{}{->,dashed ,lightpurple}{};
				}
				
			\foreach \x in {2,5}{
			\DoubleLine{SC\x}{CD}{<-,darkgreen}{}{->, darkgreen}{};
				};
			\foreach \x in {4,7}{
			\DoubleLine{SC\x}{CD}{<-,darkgreen, dashed}{}{->, darkgreen, dashed}{};
				};
			\foreach \x in {1,6}{
			\DoubleLine{SC\x}{CD}{<-,orange}{}{->, orange}{};
				};
			\foreach \x in {0,3}{
			\DoubleLine{SC\x}{CD}{<-, dashed,orange}{}{->, dashed,orange}{};
				};
				
			\draw[cyan,->] (SC1) -- (SC0);
			\draw[cyan,->] (SC0) -- (CD); 
			\draw[cyan,->] (CD) to (SC1);
        \node[align=center, font=\footnotesize,rectangle,draw] (legend) at (0 , -\gT -1) {
        Frequencies: \\$\textcolor{blue}{\rightarrow} 24$ \quad $\textcolor{darkgreen}{\rightarrow, \dashrightarrow} 7$  \quad $\textcolor{orange}{\rightarrow,\dashrightarrow} 6$ \quad $\textcolor{lightpurple}{\rightarrow, \dashrightarrow,} \textcolor{cyan}{\rightarrow} 1$  };

\end{tikzpicture}
\end{subfigure}
\caption{Examplary Asymmetric Optimal Line Plans}
\label{fig:AsymmetricLinePlans}
\end{figure}
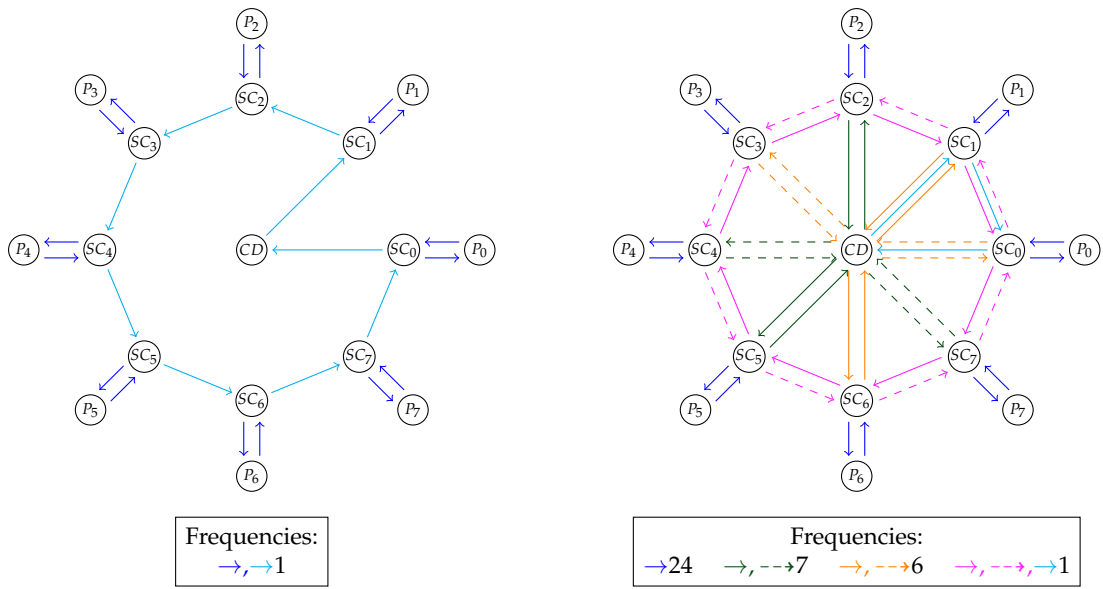

The main degree of freedom in the Parametric City is the design of the line system, i.e., the selection of the routes and their frequencies of operation, and one usually aims at combining the minimization of operator costs and the maximization of user comfort (e.g., short travel times, few changeovers, low waiting times). As the Parametric City is rotation symmetric in both its geometry and demand pattern, see an exemplary graph in Figure~\ref{fig:exampleParametricCity}, one would expect that an optimal public transport system is equally symmetric, and, in particular, that the line system is symmetric. This is desirable because a symmetric system has a simple structure that makes it easy to operate and memorize. While, as it will turn out, this is true in most cases, there are instances in which it is possible to find asymmetric line plans that have better objective values than any symmetric line plan. Indeed, \Cref{fig:AsymmetricLinePlans} presents two exemplary optimal line plans for two instances of the Parametric City. Each line is depicted as a sequence of colored arrows, indicating its direction and a corresponding frequency. The left instance is highly artificial: Here we set the capacity of the vehicles to the total patronage, which means that all passengers fit into a single vehicle. Thus, we just need to ensure that there is a line visiting all stations in order to pick up all passengers, which results in the illustrated -- clearly asymmetric -- optimal line plan. However, there are also examples with more realistic parameter choices which also result in asymmetric line plans, such as the one depicted on the right: Here, there are two orange lines of frequency $6,$ two green lines of frequency $7$ and even a circular line traveling clockwise between $SC_1, SC_0$ and $CD$ with frequency $1.$ Instead of, for example, using only central lines of frequency $7,$ it is beneficial to force a few of the passengers from $SC_3$ with destination $CD$ to take a detour via the neighboring subcenters $SC_2$ or $SC_4.$ By that, we can set the frequency to $6$ on the orange line (which then runs at full capacity), while the green lines with frequency $7$ can still fit the surplus of passengers from their neighbors. 
As neither the passenger paths, nor the line plans are rotated copies of each other, this line plan is asymmetric. 

We investigate in this paper the properties of symmetric line plans in comparison to asymmetric ones within the Parametric City. When do symmetric input data result in optimal symmetric line plans? Under which circumstances are symmetric line plans a particularly bad choice? Our aim is to provide planners with a guideline: When are they justified in assuming symmetry? When can they use a symmetric solution as a reasonable approximation? In which cases is it detrimental to assume symmetry? 
To answer these questions, we model the line planning problem in the Parametric City as a mixed integer program, taking all possible lines into account (including unidirectional ring lines). It turns out that the properties of the Parametric City allow to reformulate this model in an arc-based way, which is computationally efficient.
With this model, we can quantify the \emph{Symmetry Gap} between optimal symmetric and asymmetric line plans. It turns out to be small in numerical experiments, and it is also possible to derive analytic bounds. In particular, if one of the parameters of the Parametric City is fixed, namely, the distance factor between the subcenters and their peripheries $g$, a $\left(1+\frac{1+\sqrt{2}}{g} \right)$-factor approximation algorithm (with respect to the symmetry gap) can be derived. These results justify the use of symmetric line plans in practice. If the parameter $g$ is not restricted, instances with arbitrary symmetry gaps exist.

\section{Line Planning in the Parametric City}
\label{sec:parametricCity}

The Parametric City is described by three topological parameters, the number $n\in\dN$ of radial connections, a radius $T>0$, and an offset factor $g>0$, and five  demand parameters, the patronage $Y>0$, and shares $0<a,\alpha,\gamma,\tilde{\alpha},\tilde{\gamma}<1$. Line planning involves three further parameters, an objective weighing factor $\mu \in [0,1]$, a vehicle capacity $K>0$, and a frequency bound $\Lambda>0$.

\subsection{The Parametric City}


The Parametric City \citep{Fielbaum:ModelIntro} consists of an infrastructure graph and an associated origin-destination matrix. The graph is a planar embedding of a \emph{helm graph} \citep{helm} $\mathcal G = (V, A)$\label{ass:G}\label{ass:V}\label{ass:A} with $2n+1$ vertices and $3n$ pairs of anti-parallel arcs, where $n \geq 4$\label{ass:n} is some natural number (see \Cref{fig:exampleParametricCity} for $n = 6$).
The central node $CD$\label{ass:CD} at the origin is the \emph{central business district}. It is connected to a cycle of $n$ surrounding nodes $SC_0, SC_1, \dots SC_{n-1}$\label{ass:S}, called \emph{subcenters}, each of which is connected to a \emph{periphery} node $P_0, P_1, \dots P_{n-1}$\label{ass:Pi}. The subcenters and peripheries are placed around $CD$ at distances $T >0$\label{ass:T} and $T(1+g),g>0,$\label{ass:g} at coordinates
\begin{align*}
SC_j &= T [\sin(j \, 2\pi/n), \cos(j \, 2\pi/n)]^\top \quad &\text{ for } j \in \ZnZ, \\
P_j &= T(1+g) [\sin(j \, 2\pi/n), \cos(j \, 2\pi/n)]^\top \quad &\text{ for } j\in \ZnZ.
\end{align*}
The choice of the parameters $n, T$, and $g$ completely determines the size and shape of the graph underlying the Parametric City. In particular, periphery $P_i$ is placed on the continuation of the central axis through $CD$ and $SC_i$, and the distance between two adjacent subcenters is $ r_n T = 2 \sin(\pi/n) T$\label{ass:r}. In general, we denote by $\tau_{a}$\label{ass:tau} the length of an arc $a\in A.$  


\begin{table}[ht]
	\centering
	\begin{tabular}{|l||c | c | c |c |c |}
		\hline
	$s,t$&$P_i$ & $P_{j}, {j\neq i}$ &$SC_i$ & $SC_{j},  {j\neq i}$ &$CD$\\

		\hline 
		$P_i$ & $0$ & $0$ & $\frac{aY}{n} \beta $ & $\frac{aY}{n (n-1)} \gamma $ & $\frac{aY}{n} \alpha$\\   
		$SC_i$ & 0 & 0 & 0 & $\frac{(1-a)Y}{n(n-1)}\tilde{\gamma}$ & $\frac{(1-a)Y}{n}\tilde{\alpha}$\\
		$CD$ & 0 & 0 & 0 & 0 & 0\\
		\hline
	\end{tabular}
	\caption{Demand $d_{s,t}$ of the Parametric City}
	\label{tab:OD}
\end{table}

The demand pattern is described by the origin-destination matrix $OD = \left(d_{s,t}\right)_{(s,t) \in V\times V}$ according to \Cref{tab:OD} for $i,j \in \ZnZ.$\label{ass:d} It is supposed to model the morning rush hour: The peripheries are  considered to be pure trip generators, $CD$ is a pure trip attractor, and the subcenters are mixed districts, both generating and attracting trips. Parameter $Y>0$\label{ass:Y} is the patronage. Parameter $a \in ]0,1[$\label{ass:a} controls the share of travelers originating from the peripheries, so $a\, Y$ passengers come from a periphery, while $(1-a)Y $ start in a subcenter and none start at $CD.$ 
Parameters $\alpha, \beta, \gamma \in ]0,1[$\label{ass:alphaBetaGamma} are the percentages of trips from the peripheries to  $CD,$ their own subcenter, and the other subcenters, respectively. Similarly, $\tilde{\alpha} \in ]0,1[$\label{ass:tildeAlpha} are the percentages of trips from the subcenters to $CD$ and $\tilde{\gamma}\in ]0,1[$\label{ass:tildeGamma} the percentage of trips from the subcenters to other subcenters. It holds $\frac{\alpha}{\tilde{\alpha}} = \frac{\gamma}{\tilde{\gamma}}$ or equivalently $\tilde{\alpha}  = \frac{\alpha}{\alpha+\gamma}, \tilde{\gamma} , = \frac{\gamma}{\alpha + \gamma}$ since $\alpha + \beta + \gamma = 1$ and $\tilde{\alpha}+\tilde{\gamma} = 1$ because it is assumed that $CD$ is as attractive from a periphery as from a subcenter. 
Note that -- while the $OD$-matrix is not symmetric in the classical sense, i.e., we have $d_{P_0, CD} \neq d_{CD,P_0}$ -- the demand itself is rotation symmetric: For example, the demand from a periphery to the central business district is the same as that of any other periphery to $CD.$ 

A city like Berlin, which has no dominating center, but many (roughly) equally important subcenters, can be represented by choosing relatively large values for $n, T, g$ to mimic the geometry, and a fairly large value for $\gamma$ to model the dispersion of trips within the city.

\subsection{The Line Planning Problem}
\label{sec:linePlanning}
The line planning problem $(LPP)$ in the Parametric City can be formulated as a mixed integer program using two types of variables: $y_p \in \dR$\label{ass:y} for the passenger flow on path $p \in P$, and $f_l \in \dN$\label{ass:f} for the frequency of line $l\in L$, see, e.g., \citet{Borndorfer:Column}. Here, $P$\label{ass:P} is the set of all simple paths, while $L$\label{ass:L} is the line pool consisting of all simple directed cycles in $\mathcal{G}$ (a typical line will either be an unidirectional ``ring line'' or a path that is traversed forth and back). We refer to $f =(f_l)_{l \in L}$ as the line plan and say that a line $l$ is part of the line plan if and only if $f_l> 0$; the same holds for the passenger flow. We define the sets $P_a$\label{ass:Pa} and $L_a$~\label{ass:La} as the set of paths and lines which use arc $a\in A$, respectively. Further, $P_{s\to t}$\label{ass:Pst} is the set of all $s$-$t$-paths.
The resulting model is as follows.

\begin{definition}[Line Planning Problem]
	\label{MILP} 
	\begin{mini!}|s|[2]
		{f,y}{\mu \sum_{l\in L} \tau_l \textcolor{black}{f_l} + (1-\mu) \sum_{p \in P} \tau_p \textcolor{black}{y_p} =: cost(f,y)}
		{}
		{\textcolor{black} {(LPP)(K,\Lambda,\mu)\quad} \nonumber} 
		\addConstraint{\sum_{p \in P_{s \to t}} \textcolor{black}{y_p}}{= d_{st} \quad \label{con:demand}} {\forall (s,t) \in D}
		\addConstraint{\sum_{p \in P_a} \textcolor{black}{y_p} - \sum_{l \in L_a} \textcolor{black}{f_l} K}{\leq 0 \label{con:capacity} \quad}{\forall a \in A}
		\addConstraint{\sum_{l \in L_a} \textcolor{black}{f_l}} {\leq \Lambda  \label{con:arcCapacity}}{\forall a \in A}
		\addConstraint{\textcolor{black}{f_l}}{\in \mathbb{N}}{\forall l \in L \label{con:frequencies}}
		\addConstraint{\textcolor{black}{y_p} }{\geq 0 \label{con:paths}}{\forall p \in P}
	\end{mini!}
\end{definition}
The \emph{passenger flow conditions} \eqref{con:demand} in combination with the non-negativity constraints \eqref{con:paths} ensure that passengers get routed and demand is met. The vehicle capacity is denoted by the constant $K>0.$\label{ass:K} 
The \emph{capacity constraints} \eqref{con:capacity} guarantee that there is sufficient vehicle space to transport all passengers on arc $a\in A.$ Finally, the \emph{street capacity constraints} \eqref{con:arcCapacity} prevent overcrowded streets by ensuring that the accumulated frequency of an arc is not larger than a parameter $\Lambda > 0.$\label{ass:Lambda}
The objective is a combination of operator and user costs that are weighed by parameter $\mu \in [0,1]$~\label{ass:mu}; this is standard in most line planning approaches.
The running and travel times are equated with the total length of a line or path, i.e., $\tau_l = \sum_{a \in l} \tau_a$~\label{ass:tauL} and $\tau_p = \sum_{a \in p} \tau_a,$~\label{ass:tauP}\label{ass:in} where $\tau_a$ is the length of arc $a \in A$ with respect to the Parametric City model, i.e.,
\begin{equation} \label{ass:tau2}
    \tau_{a} =\begin{cases}
    T \quad & a \in \{(CD, SC_i), (SC_i,CD)\; \mid \; i \in \ZnZ\},\\
    gT \quad & a \in \{(P_i, SC_i), (SC_i, P_i) \; \mid  \; i \in \ZnZ\}, \\
    r_n T \quad & a \in \{(SC_i, SC_{i \pm 1}) \; \mid \; i \in \ZnZ \}; 
    \end{cases}
\end{equation}
here and elsewhere, we write $i\pm q$ for $i, q \in \ZnZ$ instead of $(i \pm q)  \bmod n$.



\subsection{An Arc-Based Model}
The main computational difficulty with the line planning model $LPP$ is the large number of integer line frequency variables, which is exponential if the line pool $L$ is completely unrestricted (and still quadratic in the number of nodes if $L$ is restricted to simple cycles). However, the line variables are always aggregated over arcs $F_a := \sum_{l\in L_a} f_l$; in particular, for the objective holds
\begin{equation}
\sum_{l\in L} \tau_l f_l = \sum_{l \in L} \sum_{a \in l} \tau_a f_l = \sum_{a \in A} \sum_{l \in L_a} \tau_a f_l = \sum_{a \in A} \tau_a F_a. 
\label{eq:lineToArc}
\end{equation}
The aggregation $F$\label{ass:F} induces a circulation, and conversely, any integer circulation can be decomposed into an equivalent set of lines. 
The resulting arc-based line planning model $ALPP$ has only $6n$ integer variables and reads as follows:

\begin{definition}[Arc-Based Line Planning Problem]
 \label{MILP:A} 
\begin{mini!}|s|[2]<b>
	{F,y}{\mu \sum_{a\in A} \tau_a \textcolor{black}{F_a} + (1-\mu) \sum_{p \in P} \tau_p \textcolor{black}{y_p} =: cost_A(F,y)}
	{}
	{\textcolor{black} {(ALPP)(K,\Lambda,\mu)\quad} \nonumber} 
	\addConstraint{\sum_{p \in P_{s \to t}} \textcolor{black}{y_p}}{= d_{s,t} \quad \label{con:demand_A}} {\forall (s,t) \in V\times V}
	\addConstraint{\sum_{p \in P_a} \textcolor{black}{y_p} - \textcolor{black}{F_a} K}{\leq 0 \label{con:capacity_A} \quad}{\forall a \in A}
	\addConstraint{\textcolor{black}{F_a}}{\leq \Lambda \label{con:arcCapacity_A}}{\forall a \in A}
	\addConstraint{\sum_{a \in \delta^+(v)} \textcolor{black}{F_{a}} - \sum_{a \in \delta^-(v)}  \textcolor{black}{F_{a} }} {= 0\quad \label{con:flow_A}}    {\forall v \in V} 
	\addConstraint{\textcolor{black}{F_a}}{\in \mathbb{N}}{\forall a \in A \label{con:frequencies_A}}
	\addConstraint{\textcolor{black}{y_p} }{\geq 0 \label{con:paths_A}}{\forall p \in P}
\end{mini!}
\end{definition}
\label{ass:delta}

The arc-based formulation is not only smaller, it will also turn out that it is easier to analyze than the line-based one. We note the following properties of feasible solutions of $ALPP:$

\begin{observation}
\label{lem:properties}
If $(F,y)$ is feasible for $ALPP$, then 
\begin{enumerate}
    \item $F_{P_i, SC_i} = F_{SC_i, P_i} \geq \left \lceil \frac{Y a }{ n K} \right \rceil$ for all $i \in \ZnZ,$ with equality if $(F,y)$ is optimal and $\mu \neq 0$. 
    \label{property:Fps=Fsp}
    \item $F_{(SC_k, CD)} > 0$ for some $k \in \ZnZ$. \label{prop:sc}
\item $F_{(CD,SC_k)} > 0$ for some $k \in \ZnZ$.
\end{enumerate}
\end{observation}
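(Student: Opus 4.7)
The plan is to prove the three parts separately, leveraging the simple pendant structure at peripheries for (1) and flow balance at $CD$ for (2) and (3). None of the arguments requires more than applying a single constraint at the right vertex or arc set.

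For part (1), I would first invoke flow conservation \eqref{con:flow_A} at $P_i$: since the only arcs incident to $P_i$ in $\mathcal{G}$ are $(P_i, SC_i)$ and $(SC_i, P_i)$, this constraint immediately forces $F_{(P_i, SC_i)} = F_{(SC_i, P_i)}$. Next I would read off \Cref{tab:OD} that $P_i$ is a pure trip generator whose outgoing demand totals $\tfrac{aY}{n}$, so by \eqref{con:demand_A} every passenger originating at $P_i$ must cross arc $(P_i, SC_i)$. The capacity constraint \eqref{con:capacity_A} combined with integrality \eqref{con:frequencies_A} then yields the claimed ceiling bound.

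The equality claim under optimality with $\mu \neq 0$ is a short local improvement argument: if $F_{(P_i, SC_i)} > \lceil \tfrac{aY}{nK}\rceil$, then simultaneously decreasing $F_{(P_i, SC_i)}$ and $F_{(SC_i, P_i)}$ by one preserves flow balance at $P_i$ and at $SC_i$ (the same amount is subtracted from $\delta^+$ and $\delta^-$ in both cases), keeps \eqref{con:capacity_A} satisfied (strict inequality remains by integrality), and trivially respects \eqref{con:arcCapacity_A}, while strictly decreasing the operator cost by $2\mu g T > 0$, contradicting optimality. This is the only step that requires any care.

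For part (2), I would sum the capacity constraints \eqref{con:capacity_A} over the arcs entering $CD$: every $CD$-destined passenger must use exactly one such arc, and the total demand into $CD$ computed from \Cref{tab:OD} is $aY\alpha + (1-a)Y\tilde{\alpha} > 0$, so at least one $F_{(SC_k, CD)}$ must be strictly positive. Part (3) then follows immediately from flow conservation \eqref{con:flow_A} at $CD$, which forces $\sum_k F_{(CD, SC_k)} = \sum_k F_{(SC_k, CD)} > 0$, so at least one summand on the left must be positive as well.
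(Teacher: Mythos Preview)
Your proposal is correct in all three parts; the arguments via flow conservation at the pendant vertices $P_i$, the capacity constraint on $(P_i,SC_i)$, the local improvement for the equality case, and flow balance at $CD$ are exactly the right ones. The paper itself states this result as an \emph{Observation} and gives no proof, so there is nothing to compare your approach against; your write-up simply fills in the details the authors left implicit.
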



\section{Symmetry}
\label{sec:symmetry}

While the graph $G = (V, A)$ of the Parametric City is clearly rotation symmetric, it is not so clear what a symmetric demand, passenger flow, or line plan is, and in fact, different notions can be considered. 

\begin{definition}[Rotation]
    We identify a vertex $v \in V$ with its coordinates in the plane and define its rotation $\rho_z$ by the angle $ z \cdot 2 \pi/n $ around the origin, $z\in\ZnZ$, as 
	\begin{align*}
	\rho_z: \ &V \rightarrow V, \quad
	 v = \begin{bsmallmatrix} v_1 \\ v_2 \end{bsmallmatrix} \mapsto  R_z v = 
	 \begin{bsmallmatrix} cos(\frac{2\pi} {n} \, z ) & -sin(\frac{2\pi} {n} \, z )\\
	sin(\frac{2\pi} {n} \, z ) & \hphantom{-}cos(\frac{2\pi} {n} \, z ) 
	\end{bsmallmatrix}\begin{bsmallmatrix} v_1 \\v_2 \end{bsmallmatrix}
	\end{align*}
	Rotations can be extended to arcs and arbitrary vertex tuples componentwise:\label{ass:rotation}
	\begin{align*}
	\rho_z:\ & V^m \rightarrow V^m, 
	\quad (v_0,v_1,\dots, v_{m-1})  \mapsto  
	 (R_z v_0, R_z v_1, \dots, R_z v_{m-1}) .
	\end{align*}
	\end{definition}

Rotating the vertices of the Parametric City by $\rho_z$ results in
	\begin{alignat*}{3}
	&\rho_z( CD)\: &=& R_z \begin{bsmallmatrix} 0 \\0 \end{bsmallmatrix} = CD,\\
	&\rho_z( SC_j)\: &=& R_z SC_j = T \begin{bsmallmatrix} cos(\frac{(j+z) 2\pi}{n})  \\ sin(\frac{(j+z) 2\pi}{n}) \end{bsmallmatrix} = SC_{j+z},\\
	&\rho_z(P_j) &=& R_z P_j = T(1+g) \begin{bsmallmatrix} cos(\frac{(j+z) 2\pi}{n})  \\ sin(\frac{(j+z) 2\pi}{n}) \end{bsmallmatrix} = P_{j+z}.
\end{alignat*} Consequently,  subcenters get rotated onto subcenters, peripheries onto peripheries, and the central business district remains fixed.
We regard a property of the Parametric City to be symmetric if it has the same value for all rotations $\rho_z$.  
The demand is then rotation symmetric in this sense.
In the same manner, we define symmetric solutions:

\begin{definition}[Symmetric Solution]
	Consider a solution $(f,y)$ to $LPP$ and the equivalent solution $(F,y)$ to $ALPP.$ 
	\begin{enumerate}
		\item  The line plan $f$ is \emph{line-symmetric} if for all $l \in L:$
		\; $f_l = f_{\rho_z(l) }$ for all   $z \in \ZnZ.$ 
		\label{it:lineplan}
		\item The passenger flow $y$ is \emph{path-symmetric} if for all $p \in P:$
		\; $y_p = y_{\rho_z(p)}$ for all $z \in \ZnZ.$ 
		 \label{it:pathplan}
		\item The line plan $f$ is \emph{arc-symmetric} if for all $a \in A:$
		\; $\sum_{l \in L_a} f_l = \sum_{l \in L_{\rho_z ( a) }} f_l$ for all $ z \in \ZnZ.$ 
		\item The frequency plan $F$ is \emph{arc-symmetric} if 
		$F_{a} = F_{\rho_z ( a )} $ for all $ z \in \ZnZ.$ \label{it:arcsymmetry} 
	\end{enumerate}
	 The solution $(f,y)$ is called \emph{symmetric} if Conditions~\ref{it:lineplan} and~\ref{it:pathplan} hold, while $(F,y)$ is \emph{symmetric} if Conditions~\ref{it:pathplan} and~\ref{it:arcsymmetry} hold.
\end{definition}


These definitions give rise to the following sequence of results. 

\begin{lemma}
Any symmetric line plan $f$ is is arc-symmetric.
\end{lemma}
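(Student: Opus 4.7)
The plan is to unpack the definitions and use that the rotation $\rho_z$ acts as a bijection on the line pool $L$ that is compatible with arc-incidence.

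First I would fix an arbitrary arc $a \in A$ and an arbitrary rotation index $z \in \ZnZ$, and observe that the map $\rho_z \colon L \to L$ is a bijection: indeed, $\rho_z$ is an automorphism of the infrastructure graph $\mathcal{G}$ (it permutes $\{CD\}$, $\{SC_i\}$, $\{P_i\}$ respectively and preserves adjacency), so it sends simple directed cycles to simple directed cycles, and its inverse $\rho_{-z}$ provides a two-sided inverse on $L$. I would then note the compatibility with arc-incidence: for every line $l \in L$ and every arc $a \in A$, $a \in l$ if and only if $\rho_z(a) \in \rho_z(l)$, which is precisely the statement $l \in L_a \iff \rho_z(l) \in L_{\rho_z(a)}$.

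Next I would chain these two facts with line-symmetry. Starting from the left-hand side of arc-symmetry,
\begin{equation*}
\sum_{l \in L_a} f_l \;=\; \sum_{l \in L_a} f_{\rho_z(l)} \;=\; \sum_{l' \in L_{\rho_z(a)}} f_{l'},
\end{equation*}
where the first equality uses that $f$ is line-symmetric and the second is the substitution $l' = \rho_z(l)$, valid because $\rho_z$ restricts to a bijection $L_a \to L_{\rho_z(a)}$ by the compatibility above. Since $a$ and $z$ were arbitrary, this is exactly the defining condition of arc-symmetry.

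There is no real obstacle here; the only step that requires a moment of care is verifying that $\rho_z$ maps $L_a$ onto $L_{\rho_z(a)}$ (both that it lands inside $L_{\rho_z(a)}$ and that it is surjective onto it), but this follows immediately from $\rho_z$ being a graph automorphism with inverse $\rho_{-z}$.
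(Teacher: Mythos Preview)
Your argument is correct and is exactly the natural proof: the paper itself omits the proof of this lemma, and your reindexing via the bijection $\rho_z\colon L_a \to L_{\rho_z(a)}$ together with line-symmetry $f_l = f_{\rho_z(l)}$ is the standard (and essentially only) way to establish it.
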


\begin{lemma}
\label{lem:fourvalues}
An arc-symmetric frequency plan of a feasible solution $(F,y)$ has the following properties: 
\begin{itemize}
    \item $F_{(P_0, SC_0)}= F_{(P_z, SC_z)} = F_{(SC_z, P_z)} \in \mathbb{N}_{{>0}} $ for all $z \in \ZnZ$,
    \item $F_{(SC_0, SC_1)} = F_{(SC_z, SC_{z+1})}$ for all $z \in \ZnZ$,
    \item $F_{(SC_0, SC_{n-1})} = F_{(SC_z, SC_{z-1})}$ for all $z \in \ZnZ$,
    \item $F_{(CD, SC_0)}= F_{(CD, SC_z)} = F_{(SC_z, CD)} \in \mathbb{N}_{{>0}} $ for all $z \in \ZnZ$.
\end{itemize}
We denote the corresponding frequency values by $  \ F_P,\  F_{S+}, \ F_{S-}$, and $F_{C},$ i.e.,  ${F_P := F_{(P_0, SC_0)},}$ \linebreak ${F_{S+} := F_{(SC_0, SC_1)},} $ $F_{S-} := F_{(SC_1, SC_0)}$ and $F_{C} := F_{(SC_0, CD)}$~\label{ass:FCSP},  see Figure \ref{fig:symmetricLP}.
\end{lemma}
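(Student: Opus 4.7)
The plan is to verify each of the four bullets essentially by unwinding the definition of arc-symmetry and combining it with flow conservation \eqref{con:flow_A} (and, where positivity is needed, Observation \ref{lem:properties}).

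First I would handle the three ``direct'' equalities produced by rotation. Since $\rho_z(P_0, SC_0) = (P_z, SC_z)$, $\rho_z(SC_0, SC_1) = (SC_z, SC_{z+1})$, $\rho_z(SC_0, SC_{n-1}) = (SC_z, SC_{z-1})$, and $\rho_z(CD, SC_0) = (CD, SC_z)$, arc-symmetry $F_a = F_{\rho_z(a)}$ applied to each of these four arcs immediately gives one of the equalities in every bullet (the one ``along the orbit'').

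Next I would establish the remaining equalities $F_{(P_z, SC_z)} = F_{(SC_z, P_z)}$ and $F_{(CD, SC_z)} = F_{(SC_z, CD)}$ using flow conservation. For the periphery, the node $P_z$ in $\mathcal G$ has only two incident arcs, $(P_z, SC_z) \in \delta^+(P_z)$ and $(SC_z, P_z) \in \delta^-(P_z)$, so \eqref{con:flow_A} at $v = P_z$ yields equality of the two frequencies directly. For the central business district, I would sum the already-established identity $F_{(CD, SC_z)} = F_{(CD, SC_0)}$ over all $z \in \ZnZ$, and likewise $F_{(SC_z, CD)} = F_{(SC_0, CD)}$ (again by arc-symmetry, rotating $(SC_0, CD)$), and invoke \eqref{con:flow_A} at $v = CD$ to get $n F_{(CD, SC_0)} = n F_{(SC_0, CD)}$, hence equality after dividing by $n$.

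Finally, for positivity, I would cite Observation \ref{lem:properties}: item \ref{property:Fps=Fsp} gives $F_{(P_i, SC_i)} \geq \lceil Ya/(nK) \rceil \geq 1$, so $F_P \in \mathbb{N}_{>0}$; items \ref{prop:sc} and the third item give $F_{(SC_k, CD)} > 0$ and $F_{(CD, SC_k)} > 0$ for some $k$, and arc-symmetry then propagates positivity to every $z$, so $F_C \in \mathbb{N}_{>0}$. The integrality comes from \eqref{con:frequencies_A}. No nontrivial obstacle is expected here; the whole lemma is a bookkeeping consequence of the three ingredients (arc-symmetry, flow conservation at the degree-$2$ periphery nodes and at $CD$, and the earlier observation). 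The only subtlety worth stating carefully is the aggregation step at $CD$, since flow conservation alone does not equate individual in- and out-arcs, but combined with arc-symmetry it does.
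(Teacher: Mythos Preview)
Your proposal is correct. The paper actually states this lemma without proof, treating it as an immediate consequence of the definition of arc-symmetry together with flow conservation \eqref{con:flow_A} and Observation~\ref{lem:properties}; your write-up supplies precisely those details, including the only point that deserves a word of care, namely that flow conservation at $CD$ equates the common in- and out-frequencies only after arc-symmetry has collapsed each side to $n$ equal summands.
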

			


\begin{lemma}
\label{lem:symmF_implies_symmf}
For any arc-symmetric solution $(F,y)$ to $ALPP,$ there exists a line-symmetric solution $(f,y)$ to $LPP$ with the same objective value.
\end{lemma}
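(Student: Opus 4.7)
The plan is to construct $f$ explicitly as a collection of line frequencies that is invariant under rotation and whose arc-aggregation recovers $F$. Once this is done, equation~\eqref{eq:lineToArc} immediately gives equality of objective values, and feasibility of $(f,y)$ for $LPP$ follows from feasibility of $(F,y)$ for $ALPP$ by substituting $\sum_{l\in L_a} f_l = F_a$ into constraints~\eqref{con:capacity} and~\eqref{con:arcCapacity}; the passenger constraints~\eqref{con:demand},~\eqref{con:paths} are unaffected because $y$ is reused verbatim.

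By Lemma~\ref{lem:fourvalues}, the arc-symmetric $F$ is described by just four values $F_P$, $F_C$, $F_{S+}$, $F_{S-}$. I would decompose $F$ into four rotation-closed families of lines: for each $i \in \ZnZ$, take $F_P$ copies of the pendant $2$-cycle $SC_i \to P_i \to SC_i$; $F_C$ copies of the radial $2$-cycle $CD \to SC_i \to CD$; and $\min(F_{S+}, F_{S-})$ copies of the ring pendulum $SC_i \to SC_{i+1} \to SC_i$. To account for the net ring circulation when $F_{S+} \neq F_{S-}$, I would add $|F_{S+} - F_{S-}|$ copies of the unique length-$n$ unidirectional ring line oriented in the direction attaining $\max(F_{S+}, F_{S-})$. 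Each $2$-cycle family forms a full rotation orbit assigned a common frequency, and the long ring line is mapped to itself under every $\rho_z$, so the resulting $f$ is line-symmetric by construction and $f_l \in \dN$ is automatic.

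A direct arc-by-arc check then confirms that the aggregation of this $f$ equals $F$: on the pendant and radial orbits the $2$-cycles contribute exactly $F_P$ and $F_C$, respectively; and on the ring orbit the pendulums contribute $\min(F_{S+}, F_{S-})$ in both directions, while the long ring line contributes the remainder in the majority direction only, producing $F_{S+}$ and $F_{S-}$ as required. I anticipate no genuine obstacle; the only step requiring care is the ring orbit when $F_{S+} \neq F_{S-}$, since no purely pendulum decomposition can realise an asymmetric circulation on the ring, and the unidirectional ring line is essential for simultaneously matching both $F_{S+}$ and $F_{S-}$ while preserving line-symmetry.
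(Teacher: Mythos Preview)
Your proposal is correct and takes essentially the same approach as the paper: invoke Lemma~\ref{lem:fourvalues} to reduce to the four frequency values $F_P, F_C, F_{S+}, F_{S-}$ and then read off a canonical line-symmetric decomposition. The paper's proof is a one-line sketch that simply asserts ``these frequencies induce a canonical symmetric line plan'', whereas you spell out the four line families explicitly and handle the case $F_{S+} \neq F_{S-}$ via a unidirectional ring line---this extra care is exactly what the paper leaves implicit.
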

\begin{proof}
As the previous lemma states, an arc-symmetric solution has at most four different frequencies ($F_P$ on the peripheral arcs, $F_{C}$ on the axes incident to $CD$, and $F_{S-}$ and $F_{S+}$ on the arcs between subcenters), see again Figure \ref{fig:symmetricLP}. These frequencies induce a canonical symmetric line plan. 
\end{proof}

\begin{figure}[!ht]
\centering
	\begin{tikzpicture}[myn/.style={draw,circle,fill=none, font=\footnotesize, outer sep=2pt, inner sep=0pt, minimum size=0.4cm},
		dot/.style={circle, draw, fill=black, inner sep=0pt, minimum width=3pt, outer sep=5pt}, scale= 0.8]
		\tikzmath{\n = 8; \nm1 = \n-1; \nm2 = \n-2; \T = 2; \gT = \T*(1+1/2);}
		\node[myn]
		(CD) at (0,0) {\tiny{$CD$}};
		\foreach \x in {0,...,\nm1}{
			\node[myn]
			(SC\x) at ({\x*360/\n}: \T cm) {\tiny{$SC_{\x}$}};
		}
		
		\foreach \x in {0,...,\nm1}{%
			\node[myn]
			(P\x) at ({\x*360/\n}: \gT cm) {\tiny{$P_{\x}$}};
		}
		\foreach \x in {0,...,\nm1}{%
			
			\DoubleLine{SC\x}{P\x}{<-,blue}{}{-> ,blue}{};
			
		}
		
		\foreach \x in {0,...,\nm1}{%
			\pgfmathtruncatemacro {\y}{mod(round (1+\x),\n)}
			\DoubleLine{SC\x}{SC\y}{<-,lightpurple}{}{->, orange}{};
		}
		
		\foreach \x in {0,...,\nm1}{
			\DoubleLine{SC\x}{CD}{<-,darkgreen}{}{->, darkgreen}{};
		};
		
		\node[align=center, font=\footnotesize,rectangle,draw] (legend) at (0 , -\gT -1) {
        Frequencies: \\$\textcolor{blue}{\rightarrow} F_P$ \quad $\textcolor{darkgreen}{\rightarrow} F_C$  \quad $\textcolor{orange}{\rightarrow} F_{S+}$ \quad $\textcolor{lightpurple}{\rightarrow} F_{S-} $};
	\end{tikzpicture}
    \caption{Symmetric Frequency/Line Plan}
    \label{fig:symmetricLP}
	\end{figure}
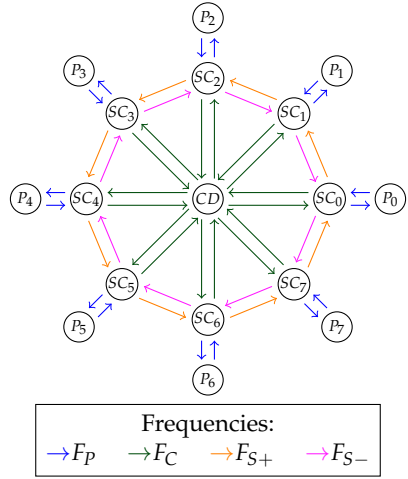	

\begin{lemma}[Construction of a Symmetric Solution]
\label{lem:ConstructionSymmetricSolution}
For any feasible solution $(F, y)$ of $ALPP,$ a symmetric solution $(F^s, y^s)$ can be constructed as follows: 
$$F^s_a:=\left \lceil \frac{1}{n}  \sum_{z = 0}^{n-1} F_{\rho_z(a)} \right \rceil \quad \text{for } a \in A, \quad \quad y^s_p := \frac{1}{n} \sum_{z = 0}^{n-1} y_{\rho_z(p)} \quad \text{for }  p \in P.$$
The user costs $\sum_{p \in P} \tau_p y_p$ remain constant under this symmetrization.
\end{lemma}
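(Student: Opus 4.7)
The plan is to verify four things: (i) path-symmetry of $y^s$ and arc-symmetry of $F^s$; (ii) feasibility of $(F^s, y^s)$ for $ALPP$; (iii) that the user cost is unchanged. The only nontrivial point will be (ii), and within it, flow conservation \eqref{con:flow_A}, because the ceiling in the definition of $F^s$ could a priori destroy balance at each vertex.

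First, both symmetries follow from a reindexing of the sum over the cyclic group: for every $z'\in\ZnZ$,
\[
 y^s_{\rho_{z'}(p)} = \tfrac{1}{n}\sum_{z\in\ZnZ} y_{\rho_{z+z'}(p)} = \tfrac{1}{n}\sum_{z\in\ZnZ} y_{\rho_{z}(p)} = y^s_p,
\]
and the analogous computation inside the ceiling gives $F^s_{\rho_{z'}(a)} = F^s_a$. The user cost invariance is then immediate by exchanging the order of summation and using the bijective substitution $q=\rho_z(p)$, together with the fact that rotations preserve arc lengths, hence path lengths: $\sum_p \tau_p y^s_p = \tfrac{1}{n}\sum_z\sum_q \tau_{\rho_{-z}(q)} y_q = \sum_q \tau_q y_q$.

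For feasibility, the demand constraint \eqref{con:demand_A} follows from the fact that $p\mapsto\rho_z(p)$ bijects $P_{s\to t}$ with $P_{\rho_z(s)\to\rho_z(t)}$, combined with rotation-invariance of the demand matrix in \Cref{tab:OD}, which yields $\sum_{p\in P_{s\to t}} y^s_p = \tfrac{1}{n}\sum_z d_{\rho_z(s),\rho_z(t)} = d_{s,t}$. For the capacity constraint \eqref{con:capacity_A}, I average the inequality $\sum_{p\in P_{\rho_z(a)}} y_p \leq K F_{\rho_z(a)}$ over $z$ and use $F^s_a \geq \tfrac{1}{n}\sum_z F_{\rho_z(a)}$. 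The street capacity \eqref{con:arcCapacity_A} holds because $F_a\in\mathbb{N}$ and $F_a\leq\Lambda$ imply $F_{\rho_z(a)}\leq\lfloor\Lambda\rfloor$, hence the averaged quantity and its ceiling are at most $\lfloor\Lambda\rfloor\leq\Lambda$. Integrality and non-negativity are built into the definitions.

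The main obstacle, flow conservation, is handled by combining arc-symmetry of $F^s$ with Observation \ref{lem:properties}\ref{property:Fps=Fsp} applied to the original $F$: since $F_{(P_j,SC_j)}=F_{(SC_j,P_j)}$ for every $j$, the averaged values on the orbits $\{(P_j,SC_j)\}_j$ and $\{(SC_j,P_j)\}_j$ coincide, and therefore so do the ceilings. Consequently \Cref{lem:fourvalues} applies, and $F^s$ takes at most the four values $F^s_P, F^s_C, F^s_{S+}, F^s_{S-}$. Balance at each vertex then follows by direct inspection: at $P_i$ both the single in- and out-arc carry $F^s_P$; at $CD$ both $\sum_{a\in\delta^+(CD)} F^s_a$ and $\sum_{a\in\delta^-(CD)} F^s_a$ equal $nF^s_C$; and at each $SC_i$ the four in-arcs contribute $F^s_C+F^s_{S+}+F^s_{S-}+F^s_P$, matching the sum over the four out-arcs. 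This completes the verification.
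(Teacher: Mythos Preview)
Your argument is essentially the paper's, and almost everything checks out. There is one small circularity, however: you invoke \Cref{lem:fourvalues} to conclude that $F^s$ takes only the four values $F^s_P, F^s_C, F^s_{S+}, F^s_{S-}$, but that lemma is stated for the frequency plan of a \emph{feasible} solution, and feasibility of $(F^s,y^s)$---specifically, flow conservation---is precisely what you are in the middle of proving. Arc-symmetry alone gives six orbit values, not four; collapsing to four requires, besides the periphery equality you established via Observation~\ref{lem:properties}, also the equality $F^s_{(SC_i,CD)}=F^s_{(CD,SC_i)}$. Without it your symbol $F^s_C$ is not yet well-defined, and your balance check ``both sums at $CD$ equal $nF^s_C$'' (and likewise at $SC_i$) does not go through.

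The fix is the one-line argument the paper supplies directly: flow conservation of the \emph{original} $F$ at $CD$ reads $\sum_{z} F_{(SC_z,CD)}=\sum_{z} F_{(CD,SC_z)}$, so the two orbit sums coincide, hence so do their averages and their ceilings, giving $F^s_{(SC_0,CD)}=F^s_{(CD,SC_0)}$. With this in hand (together with your periphery argument), the four-value structure is established without circularity, and your vertex-by-vertex balance check is valid. Replace the appeal to \Cref{lem:fourvalues} by this explicit use of flow conservation at $CD$ for $F$, and the proof is complete and matches the paper's.
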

\begin{proof}
 For a feasible solution $(F,y)$ of $ALPP(K,\Lambda,\mu)$, let $\rho_z(F)$ and $ \rho_z(y)$ denote the rotated frequency plan and passenger flow, respectively. Clearly, if $(F,y)$ is feasible, then any rotation $(\rho_z(F), \rho_z(y)$ is feasible as well. Consequently $\sum_{z = 0}^{n-1} (\rho_z(F),\rho_z(y))/n$ is feasible for the LP relaxation of $ALPP(K,\Lambda,\mu)$. 
 
 
 Rounding up $\sum_{z = 0}^{n-1} \rho_z(F)/n$ to $F^s$ is no problem in the capacity constraints $\sum_{p\in P_a} y_p - F_a K \leq 0$ (\ref{con:capacity_A}). This is also true for the street capacity constraints $F_a \leq \Lambda$ (\ref{con:arcCapacity_A}). Indeed, for any arc $a$, they hold in particular for $F_{a_{max}} := \max\{ F_{a'} \mid a' = \rho_z(a), z \in \ZnZ\}\geq \frac{1}{n} \sum_{z=0}^{n-1} F_{\rho_z(a)}$, and as $F_{a_{max}}$ is integer, they hold for $F^s_a$. The flow conservation constraints (\ref{con:flow_A}) are also fulfilled: At node $CD$ we have $$\sum_{a \in \delta^+(CD) } F_a = \sum_{a \in \delta^-(CD)} F_a.$$
	This implies that $F^s_a = F^s_{\bar{a}}$ for all $a \in \cup_{z =0}^{n-1} (SC_z, CD), \bar{a} \in \cup_{z=0}^{n-1} (CD, SC_z). $
	The same holds at each node $P_j, z \in \ZnZ.$ Because $F^s$ is symmetric, we have $F^s_{SC_0, SC_{1}} = F^s_{\rho_z(SC_0, SC_1)},$ in particular $F^s_{SC_j, SC_{j+1}} = F^s_{SC_{j-1}, SC_j};$ as well as $F^s_{SC_{j+1}, SC_{j}} = F^s_{SC_{j}, SC_{j-1}}.$ Consequently, flow is preserved in each node $SC_j$ as well. Finally, the user costs remain the same, since they depend only
on path lengths, which are identical for every rotation.
\end{proof}

\begin{corollary}
\label{cor:infeasibilityCond}
$ALPP$ is feasible if and only if a symmetric solution exists.
\end{corollary}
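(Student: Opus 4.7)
The corollary is essentially a direct consequence of Lemma~\ref{lem:ConstructionSymmetricSolution}, so my plan is to split into the two easy directions rather than do any new work.

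For the backward implication, the argument is immediate: if a symmetric solution $(F^s, y^s)$ exists, then by definition it is a feasible solution of $ALPP$, hence $ALPP$ is feasible. No further verification is needed.

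For the forward implication, the plan is to take an arbitrary feasible solution $(F,y)$ of $ALPP$ and feed it into the symmetrization construction from Lemma~\ref{lem:ConstructionSymmetricSolution}. That lemma already proves the key fact, namely that the resulting pair $(F^s, y^s)$ defined by $F^s_a = \lceil \tfrac{1}{n} \sum_{z=0}^{n-1} F_{\rho_z(a)} \rceil$ and $y^s_p = \tfrac{1}{n} \sum_{z=0}^{n-1} y_{\rho_z(p)}$ is feasible (capacity, street-capacity, flow conservation, demand, integrality and non-negativity all being checked there) and is arc-symmetric in $F^s$ and path-symmetric in $y^s$ by construction. Hence a symmetric solution exists whenever any feasible solution does.

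There is no genuine obstacle here; the only thing to be careful about is phrasing, to make clear that we only need the mere existence part of Lemma~\ref{lem:ConstructionSymmetricSolution} and not any statement about objective values or approximation quality. In particular, since $ALPP$'s feasibility does not depend on $\mu$, the construction works uniformly.
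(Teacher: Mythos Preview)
Your proposal is correct and matches the paper's intent: the corollary is stated immediately after Lemma~\ref{lem:ConstructionSymmetricSolution} without its own proof, precisely because the forward direction follows from that lemma's symmetrization construction and the backward direction is trivial. Your write-up makes both implications explicit, which is fine, and your remark that only the feasibility part of the lemma (not the cost statement) is needed is accurate.
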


\begin{lemma}
	\label{lem:symFtoSymY}
	If $(F,y)$ is a feasible arc-symmetric solution to $ALPP,$ there exists a symmetric passenger flow $\tilde{y}$ such that the solution $(F, \tilde{y})$ is feasible and $cost_A(F,y) = cost_A(F,\tilde{y}).$
	
	The same holds for a feasible symmetric solution $(f,y)$ of $LPP.$
	\begin{proof}
		Construct $(F,\tilde{y}):=(F^s,y^s)$ according to Lemma~\ref{lem:ConstructionSymmetricSolution}; as 
		$F$ is arc-symmetric, $F^s=F$ and the operator costs do not change.
		For a solution $(f,y)$ of $LPP$ proceed analogously considering aggregated frequency values $F_a := \sum_{l\in L_a} f_l$  for all $a \in A.$		
	\end{proof}
\end{lemma}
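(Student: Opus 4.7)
The plan is to reduce everything to Lemma~\ref{lem:ConstructionSymmetricSolution}. Given the arc-symmetric input $(F,y)$, I would apply that construction to obtain $(F^s,y^s)$ and then argue that $F^s = F$, so that only the passenger flow effectively gets symmetrized. Concretely, arc-symmetry gives $F_{\rho_z(a)} = F_a$ for every $z \in \ZnZ$ and $a \in A$, so
$$\frac{1}{n}\sum_{z=0}^{n-1} F_{\rho_z(a)} \; = \; F_a \; \in \; \mathbb{N},$$
and the ceiling operation in the definition of $F^s_a$ is vacuous. Hence $F^s = F$, and setting $\tilde y := y^s$ delivers a candidate that is path-symmetric by construction.

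Next I would check that $cost_A(F,\tilde y) = cost_A(F,y)$. The operator term $\mu\sum_a \tau_a F_a$ is preserved trivially because $F^s = F$. For the user term, I would exploit that path lengths are rotation-invariant ($\tau_{\rho_z(p)} = \tau_p$, since the Parametric City assigns equal length to every orbit of arcs under $\rho_z$), reindex the sum $\sum_p \tau_p \cdot \frac{1}{n}\sum_z y_{\rho_z(p)}$ over the rotation orbit, and collapse it back to $\sum_p \tau_p y_p$. Feasibility of $(F,\tilde y)$ is then inherited directly from Lemma~\ref{lem:ConstructionSymmetricSolution}, since that lemma produces a feasible pair and I have only verified that the $F$-component is unchanged.

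For the $LPP$ statement I would pass to the aggregated frequencies $F_a := \sum_{l \in L_a} f_l$. A line-symmetric $f$ induces an arc-symmetric $F$ (by the first lemma of this section), so the previous argument yields a path-symmetric $\tilde y$ with $(F,\tilde y)$ feasible for $ALPP$ at unchanged cost. Since $f$ itself is not modified, the line-based operator cost $\sum_l \tau_l f_l$ equals the arc-based one by \eqref{eq:lineToArc}, and the $LPP$ capacity constraints \eqref{con:capacity} and \eqref{con:arcCapacity} follow from the corresponding $ALPP$ constraints for $F$.

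I do not expect a serious obstacle: the argument is a short book-keeping check on top of Lemma~\ref{lem:ConstructionSymmetricSolution}. The only point that genuinely requires attention is that the ceiling in $F^s$ must be shown to have no effect, which relies on $F$ being both integer-valued and arc-symmetric; once that observation is in place, the user cost computation and the LPP reduction are routine.
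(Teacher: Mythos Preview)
Your proposal is correct and follows essentially the same route as the paper: apply Lemma~\ref{lem:ConstructionSymmetricSolution}, observe that arc-symmetry together with integrality of $F$ makes the ceiling in $F^s_a$ vacuous so $F^s=F$, and then invoke the user-cost invariance already established there; for $LPP$ you likewise pass to the aggregated frequencies $F_a=\sum_{l\in L_a}f_l$. Your write-up merely spells out a few details (the explicit computation of $F^s_a$, the reindexing of the user cost) that the paper leaves implicit.
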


\begin{lemma}
	If $(F,y)$ is an optimal solution of $ALPP$ with a symmetric passenger flow, then there exists a symmetric frequency plan $\tilde{F}$ such that $(\tilde{F},y)$ is feasible and optimal as well.
	
	The same holds for an optimal path-symmetric solution $(f,y)$ of $LPP.$
\end{lemma}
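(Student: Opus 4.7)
My plan is to construct $\tilde F$ by prescribing four integer orbit-values directly from the passenger flow $y$. Path-symmetry of $y$ makes the arc load $\sum_{p\in P_a}y_p$ constant along each rotation orbit of $A$; writing $y^{P,\mathrm{in}},y^{P,\mathrm{out}},y^{C,\mathrm{in}},y^{C,\mathrm{out}},y^{S+},y^{S-}$ for the six orbit loads, I set
\[
\tilde F_P:=\max\bigl(\lceil y^{P,\mathrm{in}}/K\rceil,\lceil y^{P,\mathrm{out}}/K\rceil\bigr),\quad \tilde F_C:=\max\bigl(\lceil y^{C,\mathrm{in}}/K\rceil,\lceil y^{C,\mathrm{out}}/K\rceil\bigr),\quad \tilde F_{S\pm}:=\lceil y^{S\pm}/K\rceil,
\]
and extend these four values to all arcs orbit-wise. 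By construction $\tilde F$ is integer and arc-symmetric.

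Then I would check feasibility of $(\tilde F,y)$. Capacity \eqref{con:capacity_A} holds directly by construction. Flow conservation \eqref{con:flow_A} is automatic: matching in- and out-values $\tilde F_C$ at $CD$, $\tilde F_P$ at each $P_j$, and at every subcenter both sides of the balance equal $\tilde F_C+\tilde F_{S+}+\tilde F_{S-}+\tilde F_P$. The street-capacity bound $\tilde F_a\leq\Lambda$ follows from a short case split: whichever of the two $y$-loads attains the $\max$ defining $\tilde F_P$ (respectively $\tilde F_C$), the corresponding orbit of $F$-values is already bounded below by that $\lceil\cdot\rceil$, hence by $\tilde F_P$; Observation~\ref{lem:properties}.\ref{property:Fps=Fsp} closes the remaining direction of the $P$-case, and the $C$- and $S$-cases are analogous.

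The heart of the argument is the cost comparison. Feasibility and integrality of $(F,y)$ give $F_a\geq\lceil y^{\mathrm{orbit}(a)}/K\rceil$ for every arc, so summing over each orbit yields $\sum_i F_{(P_i,SC_i)}\geq n\tilde F_P$, $\sum_i F_{(SC_i,CD)}\geq n\lceil y^{C,\mathrm{in}}/K\rceil$, $\sum_i F_{(CD,SC_i)}\geq n\lceil y^{C,\mathrm{out}}/K\rceil$, and analogously for $S\pm$. Flow conservation at $CD$ forces these two $C$-orbit sums to be equal, hence each is at least $n\tilde F_C$; Observation~\ref{lem:properties}.\ref{property:Fps=Fsp} does the same for the two $P$-orbits. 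Plugging into $\sum_a\tau_aF_a$ gives $\sum_a\tau_aF_a\geq 2ngT\tilde F_P+2nT\tilde F_C+nr_nT(\tilde F_{S+}+\tilde F_{S-})=\sum_a\tau_a\tilde F_a$, so $cost_A(\tilde F,y)\leq cost_A(F,y)$; optimality of $(F,y)$ then forces equality, so $(\tilde F,y)$ is optimal.

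The main obstacle I anticipate is precisely the flow conservation at $CD$: a naive symmetrisation such as $\tilde F_a:=\min_z F_{\rho_z(a)}$ is arc-symmetric but the in- and out-orbits at $CD$ are distinct orbits, so flow conservation can fail, while rounding up the average generally breaks the cost equality. Defining $\tilde F_C$ from the orbit-loads of $y$ sidesteps this by producing matching in- and out-values automatically, and flow conservation of the original $F$ at $CD$ is exactly what upgrades the per-direction cost lower bound to the joint bound needed. For the $LPP$ statement I would set $F_a:=\sum_{l\in L_a}f_l$, apply the above to $(F,y)$ (which is optimal for $ALPP$ by \eqref{eq:lineToArc}), and then invoke Lemma~\ref{lem:symmF_implies_symmf} to convert the resulting arc-symmetric $\tilde F$ into a line-symmetric $\tilde f$ of equal cost, yielding the desired symmetric optimum $(\tilde f,y)$.
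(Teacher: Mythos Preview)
Your proof is correct and follows essentially the same route as the paper: define the symmetric $\tilde F$ from the orbit-constant arc loads of $y$, verify feasibility, and compare costs via $F_a\geq\lceil Y_a/K\rceil$ together with flow conservation at $CD$ (and at the $P_i$). The only cosmetic difference is that you set $\tilde F_C$ and $\tilde F_P$ as the maximum over the two orbit directions, whereas the paper takes $\tilde F_C:=\lceil Y_{(SC_0,CD)}/K\rceil$ directly (using that no trips originate in $CD$, so this direction dominates) and $\tilde F_P:=F_{(P_0,SC_0)}$; this does not change the substance of the argument.
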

\begin{proof}
	Denote the aggregated passenger flow per arc by $Y_a = \sum_{p \in P_a} y_p$ for every $a \in A$. As $y$  is symmetric, $Y_a = Y_{\rho_z(a)}$ for any arc $a\in A$ and any $z \in \ZnZ$. The integer and capacity requirements further imply $F_a \geq \lceil Y_{a}/K \rceil$ for all $a \in A$, and
	by flow conservation at $CD$ and $P_i$,
	$$\sum_{i=0}^{n-1} F_{(SC_i,CD)} = \sum_{i=0}^{n-1} F_{(CD,SC_i)}
	\quad \text{ and } \quad 
	F_{(P_i, SC_i)} = F_{(SC_i, P_i)} \quad \text{ for } i \in \ZnZ.$$
	We define a symmetric frequency plan $\tilde{F}$ by giving the four values $\tilde{F}_P, \tilde{F}_{S+}, \tilde{F}_{S-}, \tilde{F}_C$ according to \Cref{lem:fourvalues}:
	\begin{align*}
	    \tilde{F}_P &:= F_{(P_0, SC_0)} , &
	    \tilde{F}_{S+} &:= \lceil Y_{(SC_0, SC_1)}/K \rceil , &
	    \tilde{F}_{S-} &:= \lceil Y_{(SC_0, SC_{n-1})}/K \rceil , &
	    \tilde{F}_{C} &:= \lceil Y_{(SC_0, CD)}/K \rceil.
	\end{align*}
	Then $\tilde{F}$ satisfies all capacity constraints on periphery arcs and on arcs connecting subcenters, and also on central district arcs as no trips originate in $CD$ and hence $Y_{(CD,SC_i)}\leq Y_{(SC_i,CD)}$.
	This shows that $(\tilde{F}, y)$ is feasible. For the optimality of $(\tilde{F}, y)$, we see that by definition of $\tilde{F}$, we have $\tilde{F}_a \leq F_a$ for all arcs except possibly for $a \in \{(CD,SC_i) \mid i \in \ZnZ\}$. However, since these arcs and their anti-parallel counterparts have the same lengths, flow conservation at $CD$ implies
	$$ \sum_{i=0}^{n-1} \tau_{(CD,SC_i)} F_{(CD,SC_i)} = \sum_{i=0}^{n-1} \tau_{(SC_i,CD)} F_{(SC_i, CD)} \geq \sum_{i=0}^{n-1} \tau_{(SC_i, CD)} \tilde{F}_C.$$
	Therefore, the cost of $(\tilde{F}, y)$ is at most the cost of $(F, y)$, but the latter was already optimal.
	
	For $LPP,$ consider aggregated frequencies $F_a := \sum_{l\in L_a} f_l$ to obtain arc-frequencies, symmetrize, and construct a symmetric line plan as described in Lemma~\ref{lem:symmF_implies_symmf}. 
\end{proof}

Our discussion can be summarized as follows.  
\begin{proposition}[Sufficient Condition for Symmetry]
\label{prop:sufficient}
An arc-symmetric, line-symmetric, or path-symmetric optimal solution is sufficient for the existence of a symmetric optimal solution.
\end{proposition}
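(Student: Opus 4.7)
My plan is to combine the three preceding lemmas into a short case distinction; in each case the appropriate lemma symmetrizes exactly the component of the solution that is not yet symmetric, so no new construction is needed.

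First, I would observe that the line-symmetric case reduces to the arc-symmetric one: by the first lemma in this section, every line-symmetric line plan $f$ is automatically arc-symmetric. Hence it suffices to treat the arc-symmetric case and the path-symmetric case separately.

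In the arc-symmetric case, I would take an optimal $(F,y)$ (or equivalently a line-symmetric optimal $(f,y)$ via the reduction above) and apply \Cref{lem:symFtoSymY}. This produces a path-symmetric passenger flow $\tilde{y}$ for which $(F,\tilde{y})$ is feasible and $cost_A(F,\tilde{y}) = cost_A(F,y)$; hence $(F,\tilde{y})$ is still optimal. Since $F$ is arc-symmetric and $\tilde{y}$ is path-symmetric, $(F,\tilde{y})$ is a symmetric optimal solution of $ALPP$ by definition. For $LPP$, the same lemma delivers a symmetric optimal $(f,\tilde{y})$ directly.

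In the path-symmetric case, I would start from an optimal $(F,y)$ (respectively $(f,y)$) whose passenger flow is already symmetric and invoke the immediately preceding lemma, which produces an arc-symmetric frequency plan $\tilde{F}$ (respectively a line-symmetric $\tilde{f}$ obtained via \Cref{lem:symmF_implies_symmf}) such that $(\tilde{F},y)$ (respectively $(\tilde{f},y)$) is feasible and optimal. Combined with the already path-symmetric $y$, this is a symmetric optimal solution. No real obstacle arises: the content of the proposition is already encapsulated in the preceding lemmas, and the only care needed is to translate correctly between the $F$-based formulation of $ALPP$ and the $f$-based formulation of $LPP$, which is precisely what \Cref{lem:symmF_implies_symmf} enables.
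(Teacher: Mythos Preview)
Your proposal is correct and mirrors the paper's approach exactly: the proposition is stated as a summary of the preceding lemmas (``Our discussion can be summarized as follows''), and your case distinction---reducing line-symmetry to arc-symmetry via the first lemma, then invoking \Cref{lem:symFtoSymY} for the arc-symmetric case and the subsequent lemma for the path-symmetric case---is precisely the intended argument.
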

In other words, if a symmetric optimal solution exists, then it is enough to find an optimal solution with a symmetric line plan, \emph{or} frequency plan, \emph{or} passenger flow.
Instances of the Parametric City can be analyzed with respect to the existence of optimal symmetric solutions by comparing the line optimization model $ALPP$ with its restriction $ALPP$ to arc-symmetric solutions
\begin{alignat*}{5}
&ALPP_{\mathcal{S}} &&= &ALPP& && \\
& &&  &s.t. \quad F_a &= F_{\rho_z(a)} \quad &&\forall z \in \ZnZ, a \in A.
\end{alignat*}
If the optimal objective values coincide, there is an arc-symmetric solution to $ALPP$, otherwise, there is a gap.

\section{Symmetry Gap}
\label{sec:symmetryGap}

For a feasible (mixed integer) linear program $P$, we denote the objective value of an optimal solution by $\OptVal(P)$\label{ass:OptVal}.

\begin{definition}[Symmetry Gap]
    For an instance of the line planning problem in the Parametric City, define the \emph{absolute symmetry gap} as
    $$\Gamma_{abs} := \OptVal(ALPP_{\mathcal{S}}) - \OptVal(ALPP)$$
    and the \emph{(relative) symmetry gap} as
    $$\Gamma := \frac{\OptVal(ALPP_{\mathcal{S}})- \OptVal(ALPP)}{\OptVal(ALPP)}$$
    if $ALPP$ is feasible;  $\Gamma_{abs} = 0$ and $\Gamma =0$ otherwise.
\end{definition}
    The symmetry gap is well defined, since we require the patronage $Y$ as well as demand and arc-lengths in the graph to be positive. Thus, there must exist an arc $a$ with frequency $F_a >0$ as well as a path $p$ with $y_p > 0,$ ensuring that $\OptVal(ALPP)>0.$  Note that $\Gamma_{abs} = 0$ and $\Gamma = 0$ hold if and only if there exists a symmetric optimal solution $(F,y)$ or no solution exists. 

\subsection{Bounds}
\begin{lemma}
    \label{lem:AbsSymGap}
    The absolute symmetry gap in the Parametric City is bounded by $$\Gamma_{abs} \leq 2 \mu  \left(\tau_{(SC_0,SC_1)}  + \tau_{(CD,SC_0)} \right) \,   (n-1)  =  2 \mu T \left(1+r_n \right) (n-1).$$
   
    \begin{proof}
        Let $(F,y)$ be the optimal solution to the unrestricted $ALPP$ and $(F^s,y^s)$ the symmetrized solution as in the proof of \Cref{lem:ConstructionSymmetricSolution}.
        %
        As user costs are invariant with respect to symmetrization,  see Lemma~\ref{lem:ConstructionSymmetricSolution}, the absolute symmetry gap is bounded by the difference in operator costs: 
        \begin{align*}
        \Gamma_{abs} &\leq cost(F^s,y^s) - cost(F,y) = \mu \sum_{a \in A} \tau_a \left(F_a^s-F_a\right).
        \end{align*}
        By construction of $F^s,$ 
        $\sum_{z=0}^{n-1} \left( F^s_{\rho_z(a)} - F_{\rho_z(a)} \right) \leq n-1$ for all $a \in A.$
        On the peripheral arcs however, by \Cref{lem:properties}, the frequencies $F$ and hence $F^s$ are always the same. Therefore, by using the explicit arc costs $\tau_a$ (cf. \Cref{ass:tau2}), we obtain an upper bound on the absolute gap:
        \begin{align*}
        \Gamma_{abs}
        &\leq \mu \sum_{a \in A} \tau_a \left(F_a^s-F_a\right)\\
        &\leq \mu \left(  \sum_{z=0 }^{n-1} T \, r_n (F^s_{\rho_z(SC_0,SC_1)} -F_{\rho_z(SC_0,SC_1)} +  F^s_{\rho_z(SC_1,SC_0)} -F_{\rho_z(SC_1,SC_0)})    \right. \\
        & \left. \quad \quad \quad
        + \sum_{z=0 }^{n-1} T \, (F^s_{\rho_z(SC_0,CD)} -F_{\rho_z(SC_0,CD)}+F^s_{\rho_z(CD,SC_0)} -F_{\rho_z(CD,SC_0)})  \right)\\
        & \leq 2 \mu \left(T \, r_n \, (n-1) + T \, (n-1) \right)  =  2 \mu  T  \left(1+r_n \right) (n-1). \qedhere
        \end{align*}
    \end{proof}
\end{lemma}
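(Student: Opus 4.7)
The plan is to start from an optimal solution $(F,y)$ to the unrestricted $ALPP$, pass to the symmetrized solution $(F^s,y^s)$ constructed in \Cref{lem:ConstructionSymmetricSolution}, and bound the cost difference. Since $(F^s,y^s)$ is feasible for $ALPP_{\mathcal{S}}$, we have $\OptVal(ALPP_{\mathcal{S}}) \leq cost_A(F^s,y^s)$, so $\Gamma_{abs} \leq cost_A(F^s,y^s) - cost_A(F,y)$. The user-cost term is preserved under the averaging construction (this was observed in \Cref{lem:ConstructionSymmetricSolution}), hence only the operator part contributes, giving
\[
\Gamma_{abs} \;\leq\; \mu \sum_{a \in A} \tau_a \bigl(F^s_a - F_a\bigr).
\]

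The next step is to control $F^s_a - F_a$ arc by arc. By construction, $F^s_a = \lceil \tfrac{1}{n}\sum_{z=0}^{n-1} F_{\rho_z(a)} \rceil$ is constant on every rotation orbit, so for any fixed arc $a$,
\[
\sum_{z=0}^{n-1} \bigl(F^s_{\rho_z(a)} - F_{\rho_z(a)}\bigr) \;=\; n F^s_a - \sum_{z=0}^{n-1} F_{\rho_z(a)} \;<\; n,
\]
and since the left-hand side is integer it is at most $n-1$. This uniformly bounds the per-orbit contribution. I would then apply this separately on each rotation orbit of arcs.

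The key simplification is that on the peripheral arcs, \Cref{lem:properties}\ref{property:Fps=Fsp} pins $F_{(P_i,SC_i)} = F_{(SC_i,P_i)} = \lceil Ya/(nK) \rceil$ in any optimal solution (for $\mu \neq 0$; the case $\mu = 0$ makes the bound trivial), and the same value is forced on $F^s$ by feasibility and integrality. So the peripheral orbits contribute $0$ to the sum. The remaining four orbits are $(SC_0,SC_1)$, $(SC_1,SC_0)$, $(SC_0,CD)$, $(CD,SC_0)$, with lengths $r_n T$, $r_n T$, $T$, $T$. Using the per-orbit bound $n-1$ on each, I obtain
\[
\Gamma_{abs} \;\leq\; \mu \bigl( 2 r_n T (n-1) + 2 T (n-1) \bigr) \;=\; 2\mu T (1 + r_n)(n-1),
\]
which is the claimed inequality.

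The only subtlety I foresee is handling the peripheral arcs cleanly: one must invoke \Cref{lem:properties} to say that $F$ already takes the forced ceiling value there, and verify that the averaged-and-rounded $F^s$ equals the same value, so that the corresponding summands vanish rather than merely being bounded by $n-1$. Apart from that, the argument is an orbit-wise accounting with the elementary rounding estimate above.
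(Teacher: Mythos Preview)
Your proposal is correct and follows essentially the same argument as the paper: symmetrize an optimal $(F,y)$ via \Cref{lem:ConstructionSymmetricSolution}, observe that user costs are preserved, bound each rotation orbit's operator-cost contribution by $n-1$ using the rounding estimate, and drop the peripheral orbits via \Cref{lem:properties}. Your treatment is slightly more explicit about the rounding bound and the $\mu=0$ edge case, but otherwise mirrors the paper's proof.
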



\begin{corollary}
    If $\mu = 0$ and $ALPP$ is feasible, then there exists a symmetric, optimal solution to the line planning problem for any instance of the Parametric City. 
\end{corollary}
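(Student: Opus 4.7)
The plan is to derive this corollary almost immediately from \Cref{lem:AbsSymGap} together with \Cref{lem:ConstructionSymmetricSolution}. The key observation is that when $\mu = 0$ the objective of $ALPP$ reduces to pure user cost $\sum_{p \in P} \tau_p y_p$, and \Cref{lem:ConstructionSymmetricSolution} tells us that the user cost is invariant under the symmetrization procedure there.

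Concretely, I would argue as follows. Assume $ALPP$ is feasible and let $(F,y)$ be any optimal solution. Apply \Cref{lem:ConstructionSymmetricSolution} to obtain a feasible solution $(F^s,y^s)$; by construction $F^s$ is arc-symmetric and $y^s$ is path-symmetric, so $(F^s,y^s)$ is symmetric. Since the user costs are preserved, $\sum_{p \in P} \tau_p y^s_p = \sum_{p \in P} \tau_p y_p$, and because $\mu = 0$ the operator cost term drops out of the objective entirely. Hence
\begin{equation*}
cost_A(F^s,y^s) = (1-\mu)\sum_{p \in P} \tau_p y^s_p = (1-\mu)\sum_{p \in P} \tau_p y_p = cost_A(F,y) = \OptVal(ALPP),
\end{equation*}
so $(F^s,y^s)$ is a symmetric optimal solution.

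Equivalently, one can read the conclusion directly off \Cref{lem:AbsSymGap}: setting $\mu = 0$ in the bound yields $\Gamma_{abs} \le 0$, and since $ALPP_{\mathcal{S}}$ is a restriction of $ALPP$ we always have $\Gamma_{abs} \ge 0$, giving $\Gamma_{abs} = 0$. Combined with the remark right after the definition of the symmetry gap that $\Gamma_{abs} = 0$ implies the existence of a symmetric optimal solution (when $ALPP$ is feasible), this finishes the proof.

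There is no real obstacle here: all the work has been done in \Cref{lem:ConstructionSymmetricSolution} (feasibility of the symmetrized solution and invariance of user cost) and \Cref{lem:AbsSymGap} (controlling the operator-cost contribution). The only thing to check is that nothing further is needed for the case $\mu = 0$, in particular that feasibility of $ALPP$ is indeed equivalent to feasibility of $ALPP_{\mathcal{S}}$, which is exactly \Cref{cor:infeasibilityCond}. I would therefore present the proof as a two-line application of these prior results.
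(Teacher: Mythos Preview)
Your proposal is correct and matches the paper's approach: the corollary is placed immediately after \Cref{lem:AbsSymGap} with no separate proof, indicating it is meant as a direct consequence of the bound $\Gamma_{abs} \leq 2\mu T(1+r_n)(n-1)$ vanishing at $\mu = 0$. Your alternative derivation via \Cref{lem:ConstructionSymmetricSolution} is equally valid and in fact just unpacks the same mechanism that underlies the proof of \Cref{lem:AbsSymGap}.
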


To find an upper bound on the relative symmetry gap, we first need to determine a lower bound on the objective of $ALPP$: Such a lower bound, actually to the LP relaxation of $ALPP$, is provided by a certain minimum-cost flow  problem. Its optimal objective value can in turn be bounded from below by a term in parameters of the Parametric City that is independent from the number of zones and from the demand pattern.

 \begin{definition}[Uncapacitated Minimum-Cost Flow Problem]
     \label{UMCFP}
     \begin{mini!}|s|[2]<b>
         {y}{ \sum_{p \in P} \bar{c}_p \textcolor{black}{y_p} = cost_{UMCFP}(y)}
         {}
         {\textcolor{black} {(UMCFP)\quad} \nonumber}
         \addConstraint{\sum_{p \in P_{st}} \textcolor{black}{y_p}}{= d_{s,t} \quad \label{con:demand_sp}} {\forall (s,t) \in D}
         \addConstraint{\textcolor{black}{y_p} }{\geq 0 \label{con:paths_lb}}{\forall p \in P}
     \end{mini!}
     with $\bar{c}_p := \sum_{a \in p } \bar{c}_a$ and
     $$\bar{c}_a := \begin{cases}
     (2\mu/K +(1-\mu)) T \, g &\text{ for } a=(P_i,SC_i),\\
     (1-\mu) T \, g & \text{ for } a=(SC_i,P_i),\\
     (\mu/K + (1-\mu)) T \, r_n & \text { for } a = (SC_i, SC_{i\pm1}),\\
     (2\mu/K + (1-\mu)) T & \text{ for } a = (SC_i, CD),\\
     (1-\mu) T & \text{ for } a =(CD,SC_i).
     \end{cases}$$
 \end{definition}
 
 \begin{proposition}
     
     \label{prop:explicitSolutionToSP}
     The optimal objective value of the minimum-cost flow problem $UMCFP$ 
     is given by
     $$  \OptVal(UMCFP) = T{Y} \lambda(\alpha, \gamma),$$
     where
     \begin{align*}
     \lambda(\alpha,\gamma) &= \frac{ k_n(k_n+1)r_n+ 2 (n-2k_n+1)}{n-1} \left(\frac{\mu}{K} + (1-\mu)\right)  \, \left({a\gamma+(1-a) \tilde{\gamma}}\right) \\
     &\hspace{.5cm} + \left(\frac{2\mu}{K} + (1-\mu)\right) \left(a\alpha+(1-a)\tilde{\alpha} +g a \right),
     \end{align*}
     and $k_n:=\left\lfloor \frac{2}{r_n} \right\rfloor$.
    Furthermore, independent of the number of zones and the demand pattern, 
    $$(1+ga-a) \left( \left(2-\frac{2}{\pi} \right)\frac{\mu}{K} + 1-\mu\right) \; \leq \; 
    \lambda(\alpha,\gamma)
    \;\leq \; 4 (1+ga) \left(\frac{2\mu}{K} + 1-\mu\right).$$

 \end{proposition}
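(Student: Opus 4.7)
My plan is to observe that $UMCFP$ decouples into independent single-commodity shortest-path problems. Since $UMCFP$ has no capacity or cross-commodity coupling constraints, the LP optimum routes every unit of demand $d_{s,t}$ along a shortest $s$-$t$-path with respect to the arc costs $\bar c$, giving
$$\OptVal(UMCFP) \;=\; \sum_{(s,t) \in V \times V} d_{s,t}\,\operatorname{dist}_{\bar c}(s,t).$$
Writing $\bar\kappa := T(2\mu/K + 1-\mu)$ and $\kappa := T(\mu/K + 1-\mu)$, the shortest-path distances are straightforward: $\operatorname{dist}_{\bar c}(P_i, SC_i) = g\bar\kappa$, $\operatorname{dist}_{\bar c}(SC_i, CD) = \bar\kappa$, and $\operatorname{dist}_{\bar c}(P_i, CD) = (g+1)\bar\kappa$. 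For an $SC_i \to SC_j$ trip with cyclic distance $d$, I would compare the ring path (cost $d\, r_n \kappa$) with the $CD$-detour $SC_i \to CD \to SC_j$ (cost $\bar\kappa + T(1-\mu) = 2\kappa$); the ring wins exactly when $d\, r_n \leq 2$, i.e.\ when $d \leq k_n = \lfloor 2/r_n \rfloor$. Trips $P_i \to SC_j$ with $j \neq i$ prepend the spur cost $g\bar\kappa$ to this minimum, and the return-direction arcs carry no flow.

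The explicit value of $\lambda$ then follows by summing demand $\times$ distance. Rotational symmetry makes every inner sum over $j \neq i$ independent of $i$, so it reduces to evaluating $\sum_{j \neq i} \min(d_{ij} r_n, 2)$; counting subcenters at each cyclic distance (two for every $d < n/2$, plus one at $d = n/2$ when $n$ is even) yields the closed form appearing in the first term of $\lambda$. Plugging in the demand entries from Table~\ref{tab:OD} and simplifying with $\alpha+\beta+\gamma = 1$, the contributions group into a tangential term proportional to $a\gamma + (1-a)\tilde\gamma$ and a radial/central term proportional to $a\alpha + (1-a)\tilde\alpha + ag$, matching the stated formula.

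For the bounds, I would introduce the abbreviations $A := 2\mu/K + 1-\mu$ and $B := (\mu/K + 1-\mu)\,C_n$, where $C_n$ denotes the purely combinatorial prefactor of the first term. Substituting $\tilde\alpha = \alpha/(\alpha+\gamma)$ and $\tilde\gamma = 1-\tilde\alpha$ and setting $s := \alpha+\gamma \in (0,1)$, $t := \tilde\alpha \in (0,1)$, one can rewrite
$$\lambda \;=\; \bigl[(A-B)\,t + B\bigr]\,(a s + 1 - a) \;+\; A\,a\,g.$$
Since $(A-B)t + B \in [\min(A,B), \max(A,B)]$ and $a s + 1 - a \leq 1$, the upper bound reduces to showing $\max(A,B) + A a g \leq 4A(1+ag)$, which follows from the elementary estimate $C_n < 4$ obtained by using $k_n r_n \leq 2$ in the numerator of $C_n$. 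The lower bound reduces to $\min(A,B) \geq M := (2 - 2/\pi)\mu/K + (1-\mu)$; the inequality $A \geq M$ is immediate, while $B \geq M$ is equivalent to $C_n \geq 2 - 2/\pi$. This last step is the main obstacle: because $C_n \to 2 - 2/\pi$ only as $n \to \infty$, the inequality is asymptotically tight and has to be proved by carefully combining the defining sandwich $k_n r_n \leq 2 < (k_n+1) r_n$ with sharp estimates for $r_n = 2\sin(\pi/n)$, possibly supplemented by direct verification for the handful of small $n$ where $k_n$ takes very small integer values.
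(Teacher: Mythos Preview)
Your proposal is correct and follows essentially the same route as the paper: decouple $UMCFP$ into shortest-path problems, compare the ring path of length $d\,r_n\kappa$ against the $CD$-detour of length $2\kappa$ to obtain the threshold $k_n=\lfloor 2/r_n\rfloor$, and then sum demand times distance using rotational symmetry; for the bounds, both you and the paper reduce everything to showing $2-\tfrac{2}{\pi}\le C_n\le 4$, with the upper bound coming immediately from $k_n r_n\le 2$ and the lower bound proved via sharp estimates on $r_n$ (the paper uses the inscribed-circle inequality $n r_n>2\pi\cos(\pi/n)$ and a monotonicity argument on $[8,\infty)$) together with direct verification for $n\in\{4,5,6,7\}$. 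Your repackaging via $A,B,s,t$ and the affine identity $\lambda=[(A-B)t+B](as+1-a)+Aag$ is a clean way to organize the final inequalities, but the substance is the same; one small slip is that $B\ge M$ is implied by, not equivalent to, $C_n\ge 2-2/\pi$, though only the implication is needed.
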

 \begin{proof} We can solve $UMCFP$ by determining the shortest path for each origin-destination pair. For each zone, these are the paths as depicted in Figure~\ref{fig:shortestpaths}. The passenger flow is determined by the demand. The details can be found in the appendix.
 \end{proof}
 
   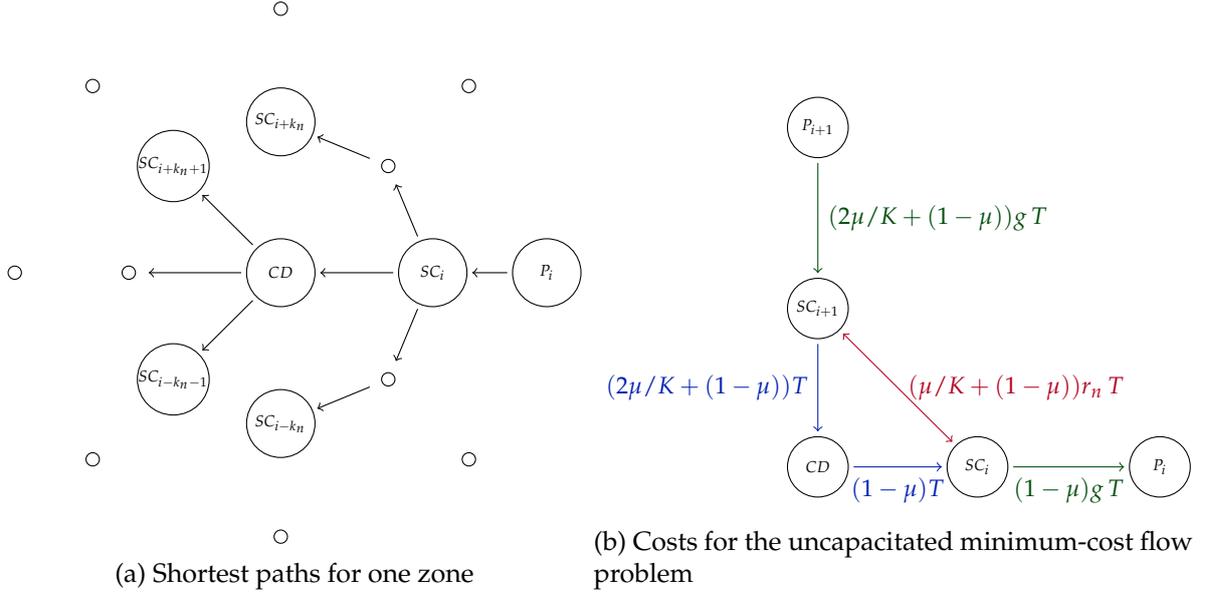
\begin{figure}
    \centering
    \begin{subfigure}[b]{0.495\textwidth}
    \centering
    \begin{tikzpicture}[myn/.style={draw,circle,fill=none, font=\footnotesize, outer sep=2pt, inner sep=0pt,minimum size=0.9cm},
dot/.style={circle, draw, fill=none, inner sep=0pt, minimum width=5pt, outer sep=5pt},]
\tikzmath{\n = 8; \nm1 = \n-1; \nm2 = \n-2; \T = 2; \gT = \T*(1+0.75);}
	\node[myn]
	(CD) at (0,0) {\tiny{$CD$}};
	
	\node[myn]
		(SC0) at ({0*360/\n}: \T cm) {\tiny{$SC_{i}$}};
					\node[dot]
		(SC1) at ({1*360/\n}: \T cm) {};
					\node[myn]
		(SC2) at ({2*360/\n}: \T cm) {\tiny{$SC_{i+k_n}$}};
					\node[myn]
		(SC3) at ({3*360/\n}: \T cm) {\tiny{$SC_{i+k_n+1}$}};
					\node[dot]
		(SC4) at ({4*360/\n}: \T cm) {};
					\node[myn]
		(SC5) at ({5*360/\n}: \T cm) {\tiny{$SC_{i-k_n-1}$}};
					\node[myn]
		(SC6) at ({6*360/\n}: \T cm) {\tiny{$SC_{i-k_n}$}};
		\node[dot]
		(SC7) at ({7*360/\n}: \T cm) {};

	\foreach \x in {1,...,\nm1}{%
		\node[dot]
		(P\x) at ({\x*360/\n}: \gT cm) {};
	}
	\node[myn]
		(P0) at ({0/\n}: \gT cm) {\tiny{$P_{i}$}};
	\draw[->] (P0) -- (SC0);
	\draw[->] (SC0) -- (CD);
	\draw[->] (CD) -- (SC3);
	\draw[->] (CD) -- (SC4);
	\draw[->] (CD) -- (SC5);

	\draw[->] (SC0) -- (SC1);
	\draw[->] (SC0) -- (SC7);
	\draw[->] (SC7) -- (SC6); 
	\draw[->] (SC1) -- (SC2);
\end{tikzpicture}
     \caption{Shortest paths for one zone}
    \label{fig:shortestpaths}
\end{subfigure}%
\begin{subfigure}[b]{0.495\textwidth}
\centering
\begin{tikzpicture}[myn/.style={draw,circle,fill=none, font=\footnotesize, outer sep=2pt, inner sep=0pt,minimum size=0.8cm}]
\tikzmath{\n = 8; \nm1 = \n-1; \nm2 = \n-2; \T = 2; \gT = \T*(1+0.75);}
			\node[myn]
			(CD) at (0,0) {\tiny{$CD$}};
            
            \node[myn]
            (SC0) at (0,2.1) {\tiny{$SC_{i+1}$}};
            \node[myn] 
            (P0) at (0,4.5) {\tiny{$P_{i+1}$}};
            \node[myn]
            (SC1) at (2.1,0) {\tiny{$SC_{i}$}};
             \node[myn] 
            (P1) at (4.5,0) {\tiny{$P_{i}$}};
            
            \path[->,draw,darkblue]
            (CD) edge node[below, font=\footnotesize, darkblue] {$(1-\mu)T$} (SC1);
            \path[->,draw,darkblue]
            (SC0) edge node[left,font=\footnotesize, darkblue] {$(2\mu/K + (1-\mu))T$} (CD);
            \path[<->,draw, darkred]
            (SC1) edge node[right,font=\footnotesize, darkred] {$(\mu/K + (1-\mu))r_n\,T$} (SC0);
              \path[->,draw, darkgreen]
            (SC1) edge node[below,font=\footnotesize, darkgreen] {$ (1-\mu)g\,T$} (P1);

              \path[->,draw,darkgreen]
            (P0) edge node[right,font=\footnotesize, darkgreen] {$(2\mu/K + (1-\mu))g\,T$} (SC0);

\end{tikzpicture}
    \caption{Costs for the uncapacitated minimum-cost flow problem}
     \label{fig:shortestpathvalues}
\end{subfigure}
\caption{Shortest path-related parameters}
\label{fig:shortestPathParameters}
\end{figure}


 
 \begin{lemma}
     \label{lem:lowerBound}
     For any instance of the Parametric City, the optimal objective value of the uncapacitated minimum-cost flow problem $(UMCFP)$ provides a lower bound for the line planning problem, i.e.,
     $$\OptVal(UMCFP) \leq \OptVal(ALPP).$$
    
 \end{lemma}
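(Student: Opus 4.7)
The plan is to take an arbitrary feasible $(F, y)$ of $ALPP$ and show $cost_A(F, y) \geq cost_{UMCFP}(y)$. Since $y$ satisfies the demand equations~\eqref{con:demand_A} and the non-negativity~\eqref{con:paths_A}, it is automatically feasible for $UMCFP$, so applying the inequality to an optimal $(F, y)$ of $ALPP$ yields $\OptVal(UMCFP) \leq \OptVal(ALPP)$.

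Aggregating passenger flow by $Y_a := \sum_{p \in P_a} y_p$, the user-cost contribution rewrites as $(1-\mu) \sum_p \tau_p y_p = (1-\mu) \sum_a \tau_a Y_a$. Inspecting the definition of $\bar{c}_a$ reveals that the residual $\bar{c}_a - (1-\mu)\tau_a$ equals $2\mu\tau_a/K$ on the inbound peripheral arcs $(P_i, SC_i)$ and on the inbound central arcs $(SC_i, CD)$, equals $\mu\tau_a/K$ on the subcenter-subcenter arcs, and vanishes on the reverse periphery arcs $(SC_i, P_i)$ and outbound central arcs $(CD, SC_i)$. The desired cost inequality thereby reduces to
$$\mu \sum_{a \in A} \tau_a F_a \;\geq\; \frac{2\mu Tg}{K} \sum_i Y_{(P_i, SC_i)} \;+\; \frac{\mu Tr_n}{K} \sum_{a \in A_{SS}} Y_a \;+\; \frac{2\mu T}{K} \sum_i Y_{(SC_i, CD)},$$
where $A_{SS}$ denotes the set of subcenter-subcenter arcs.

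I would establish this bound one arc-class at a time. On each subcenter-subcenter arc, the capacity constraint~\eqref{con:capacity_A} directly yields $Tr_n F_a \geq Tr_n Y_a/K$. On peripheral arcs, Observation~\ref{lem:properties} ensures $F_{(P_i, SC_i)} = F_{(SC_i, P_i)}$, so the combined operator contribution per zone is $2Tg\, F_{(P_i, SC_i)} \geq 2Tg\, Y_{(P_i, SC_i)}/K$. On arcs incident to $CD$, flow conservation~\eqref{con:flow_A} at $CD$ gives $\sum_i F_{(CD, SC_i)} = \sum_i F_{(SC_i, CD)}$, and summing capacity over inbound arcs yields $\sum_i T(F_{(SC_i, CD)} + F_{(CD, SC_i)}) \geq (2T/K) \sum_i Y_{(SC_i, CD)}$. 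Adding the three contributions produces the required bound.

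The main obstacle is the factor-of-two bookkeeping on inward arcs: $\bar{c}_a$ concentrates the full $2\mu/K$ operator coefficient on the inbound direction, which can only be matched by combining the frequencies of both directions. That is precisely where the Parametric City structure enters, namely through Observation~\ref{lem:properties} for the periphery pairs and flow conservation at $CD$ for the central arcs collectively. The reverse-direction flow $Y_{(CD, SC_i)}$ may well be positive when passengers are routed through $CD$, but this is harmless because $\bar{c}_{(CD, SC_i)} = (1-\mu)\tau_{(CD, SC_i)}$ already matches the corresponding user-cost term verbatim.
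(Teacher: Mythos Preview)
Your argument is correct and follows essentially the same route as the paper's proof: both bound the operator cost $\mu\sum_a \tau_a F_a$ arc-class by arc-class, pairing the two peripheral arcs via $F_{(P_i,SC_i)}=F_{(SC_i,P_i)}$ (flow conservation at $P_i$, i.e., Observation~\ref{lem:properties}), pairing the central arcs via flow conservation at $CD$, and handling subcenter--subcenter arcs directly through the capacity constraint. Your presentation via the residual $\bar c_a-(1-\mu)\tau_a$ is slightly more transparent than the paper's detour through an explicit LP relaxation $RLPP$, but the underlying mechanics are identical.
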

 \begin{proof}
     $UMCFP$ is equivalent to the following relaxation of $ALPP$: Relax the integrality constraints to non-negativity constraints, as well as the street-capacity constraints $(\Lambda \to \infty).$
     There are no travelers towards peripheries, the effective cost of a flow being directed via arc $(P_i, SC_i)$ is consequently twice as high, to compensate for the ``empty seats" in the opposite direction. Similarly 
     at the central node: Since there are no travelers originating from the central node, any flow using an outward arc from $CD$ must travelalong some arc $(SC_i, CD),$  any traveler remaining in the center ``blocks" a seat on the outgoing arcs. Consequently, one can assign twice the costs for the incoming arcs at $CD$ and set costs along the outgoing arcs to zero. 
     A detailed proof can be found in the appendix.
 \end{proof}

\begin{corollary}
    
	\label{cor:tightestBound}
For any instance of the Parametric City with fixed parameters $K \in \mathbb{N}_{>0}$ and $\mu \in [0,1],$ 
and $\lambda(\alpha, \gamma)$ as defined in Proposition~\ref{prop:explicitSolutionToSP}, the symmetry gap is at most 
$$\Gamma \leq \mu \frac{2(1+r_n)(n-1)}{Y \lambda(\alpha, \gamma)}.$$
\end{corollary}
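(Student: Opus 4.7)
The corollary is essentially a direct consequence of the two preceding bounds, so the plan is to simply chain them together. First I would observe that by definition
$$\Gamma = \frac{\Gamma_{abs}}{\OptVal(ALPP)},$$
provided $ALPP$ is feasible (if not, $\Gamma = 0$ and the inequality holds trivially since the right-hand side is nonnegative).

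Next I would substitute the two bounds already established. From Lemma~\ref{lem:AbsSymGap} we have
$$\Gamma_{abs} \leq 2\mu T (1+r_n)(n-1),$$
and from Lemma~\ref{lem:lowerBound} combined with Proposition~\ref{prop:explicitSolutionToSP},
$$\OptVal(ALPP) \geq \OptVal(UMCFP) = T Y \lambda(\alpha,\gamma).$$
Dividing the first bound by the second yields
$$\Gamma \leq \frac{2\mu T (1+r_n)(n-1)}{T Y \lambda(\alpha,\gamma)} = \mu \frac{2(1+r_n)(n-1)}{Y \lambda(\alpha,\gamma)},$$
after cancelling the common factor of $T$.

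There is no real obstacle here: all the substantive work has already been done in Lemma~\ref{lem:AbsSymGap} (bounding the numerator by a symmetrization argument that exploits $F_P^s = F_P$) and in Proposition~\ref{prop:explicitSolutionToSP} together with Lemma~\ref{lem:lowerBound} (identifying $UMCFP$ as a relaxation and computing its optimum explicitly). The only minor point worth mentioning is the boundary case $Y \lambda(\alpha,\gamma) = 0$: this does not occur under the standing assumptions $Y>0$, $0<\alpha,\gamma<1$, $g>0$, which ensure $\lambda(\alpha,\gamma) > 0$ (for instance via the lower bound in Proposition~\ref{prop:explicitSolutionToSP} when $\mu < 1$, or directly from the explicit formula when $\mu=1$), so the denominator is strictly positive whenever $ALPP$ is feasible.
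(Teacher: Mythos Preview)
Your proposal is correct and matches the paper's intended argument exactly: the paper states this as a corollary without an explicit proof precisely because it follows immediately by dividing the bound of Lemma~\ref{lem:AbsSymGap} by the lower bound $\OptVal(ALPP)\geq\OptVal(UMCFP)=TY\lambda(\alpha,\gamma)$ from Lemma~\ref{lem:lowerBound} and Proposition~\ref{prop:explicitSolutionToSP}, and then cancelling $T$.
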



\begin{corollary}
\label{cor:boundSP}
	The symmetry gap in the Parametric City is bounded by 
	$$\Gamma \leq \mu \frac{ 2 (1+r_n)(n-1)}{Y\left(\left(2-\frac{2}{\pi}\right) \frac{\mu}{K} + 1-\mu\right)(1+ga-a)},$$
	for fixed parameters $K \in \mathbb{N}_{>0}, \mu \in [0,1],$ and $\Lambda \geq 1,$ independent of the demand parameters $\alpha$ and $\gamma.$
\end{corollary}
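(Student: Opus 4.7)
The plan is to derive the stated bound by combining two previously established results: Corollary~\ref{cor:tightestBound}, which gives the tightest bound on $\Gamma$ in terms of $\lambda(\alpha,\gamma)$, and the universal lower bound on $\lambda(\alpha,\gamma)$ stated in Proposition~\ref{prop:explicitSolutionToSP}. The entire argument amounts to a single substitution, so the proof should be very short.

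First I would invoke Corollary~\ref{cor:tightestBound} to write
\[
\Gamma \;\leq\; \mu\,\frac{2(1+r_n)(n-1)}{Y\,\lambda(\alpha,\gamma)}.
\]
Then I would apply the lower bound
\[
\lambda(\alpha,\gamma) \;\geq\; (1+ga-a)\left(\left(2-\tfrac{2}{\pi}\right)\tfrac{\mu}{K}+1-\mu\right)
\]
from Proposition~\ref{prop:explicitSolutionToSP}. Since $\lambda(\alpha,\gamma)$ appears in the denominator of the bound on $\Gamma$ and the right-hand side is positive (because $a\in{]0,1[}$, $g>0$, $K>0$, and $\mu\in[0,1]$ imply both factors are strictly positive, noting that $2-2/\pi>0$ so even the extreme case $\mu=1$ is fine), replacing $\lambda(\alpha,\gamma)$ by its lower bound can only increase the right-hand side. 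This yields the claim.

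The only thing worth double-checking is positivity of the denominator so the inequality direction is preserved; this is immediate from the parameter ranges assumed in Section~\ref{sec:parametricCity}. The resulting bound no longer depends on $\alpha$ and $\gamma$, as required. Since $\Lambda \geq 1$ is merely what is needed to ensure $ALPP$ is feasible and $\Gamma$ is well-defined via Corollary~\ref{cor:tightestBound}, no further argument is needed. Thus
\[
\Gamma \;\leq\; \mu\,\frac{2(1+r_n)(n-1)}{Y\left(\left(2-\tfrac{2}{\pi}\right)\tfrac{\mu}{K}+1-\mu\right)(1+ga-a)},
\]
which completes the proof.
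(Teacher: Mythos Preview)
Your argument is correct: combining Corollary~\ref{cor:tightestBound} with the lower bound on $\lambda(\alpha,\gamma)$ stated in Proposition~\ref{prop:explicitSolutionToSP} immediately yields the claim, and your positivity check for the denominator is the only thing that needs verifying.

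The one thing to be aware of is organizational rather than mathematical. In the paper, the inequality
\[
\lambda(\alpha,\gamma)\;\geq\;(1+ga-a)\left(\left(2-\tfrac{2}{\pi}\right)\tfrac{\mu}{K}+1-\mu\right)
\]
is \emph{stated} in Proposition~\ref{prop:explicitSolutionToSP}, but its actual derivation is deferred to the appendix under the heading ``Proof of Corollary~\ref{cor:boundSP}'': that is where the authors carry out the analytic estimates on $\tfrac{k_n(k_n+1)r_n+2(n-2k_n+1)}{n-1}$ (showing it lies in $[2-2/\pi,4]$) and then bound $\lambda(\alpha,\gamma)$ from below. So the paper's proof of this corollary is long and technical, whereas yours is a one-line substitution. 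Both are valid; you are simply citing a bound that the paper chooses to prove at this point rather than earlier. If you were reproducing the paper's structure you would need to supply that analytic work here; if Proposition~\ref{prop:explicitSolutionToSP} is taken as fully proven, your short proof is exactly right.
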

	
For a realistic choice of parameters, the two terms $\left(\left(2-\frac{2}{\pi}\right) \frac{\mu}{K} + 1-\mu\right)$ and $\left(1+ga-a\right)$ carry little weight,
and the symmetry gap is dominated by the ratio $(n-1)/Y,$ which will be very small in real-life applications. 
%
%
In general, a better bound can be derived by establishing a lower bound on $\OptVal(MILP_A)$ which takes effect when the lower bound derived from the uncapacitated minimum-cost flow problem goes to zero. 


\begin{lemma}[Lower Bound on Operator Costs]
	\label{lem:operatorCostBound}
	For any instance of the Parametric City, the operator costs $\mu \sum_{a \in A} \tau_a F_a$ are at least $\mu T(2ng+2+(n-1)r_n).$
	\end{lemma}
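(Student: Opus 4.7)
The plan is to split $\sum_{a \in A} \tau_a F_a$ into its peripheral, radial, and circumferential contributions and to lower bound each separately. For the peripheral arcs, Observation~\ref{lem:properties}(\ref{property:Fps=Fsp}) gives $F_{(P_i, SC_i)} = F_{(SC_i, P_i)} \geq 1$ for every $i \in \ZnZ$, so their combined contribution to the operator cost is at least $2n \cdot gT = 2ngT$.

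For the central arcs (the radial arcs incident to $CD$ and the circumferential arcs between neighbouring subcenters) the key step is to examine the support graph $H$ of $F$ restricted to those arcs and argue that $H$ is strongly connected on $\{CD, SC_0, \dots, SC_{n-1}\}$. Positive subcenter-to-subcenter demand ($\tilde\gamma > 0$) combined with the capacity constraint \eqref{con:capacity_A} forces a directed $SC_i$-to-$SC_j$ path in the support for every $i \neq j$; positive periphery-to-$CD$ demand yields an $SC_i$-to-$CD$ path for every $i$; and Observation~\ref{lem:properties} supplies some outgoing arc $(CD, SC_{k})$ with $F>0$, which together with strong connectivity among the subcenters gives $CD$-to-$SC_j$ reachability for all $j$.

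Next I lower bound $\sum_{a \in H}\tau_a$ via a block decomposition. Let $H^c$ be the subgraph of $H$ consisting only of circumferential arcs, and let $k$ be the number of weakly connected components that $H^c$ induces on the subcenters. Because circumferential arcs join only consecutive subcenters, these components are intervals of the cycle $SC_0\text{-}SC_1\text{-}\cdots\text{-}SC_{n-1}\text{-}SC_0$, so a spanning-forest count gives at least $n - k$ circumferential arcs in $H$. Any directed path in $H$ between two different intervals must pass through $CD$, so $CD$ must be reachable from, and reach, each interval via a distinct radial in-arc and out-arc, contributing at least $2k$ radial arcs. Since radial arcs have length $T$ and circumferential arcs have length $r_n T$,
\begin{equation*}
    \sum_{a \in H}\tau_a \;\geq\; 2kT + (n-k) r_n T \;=\; \bigl(2T + (n-1) r_n T\bigr) + (k-1)(2 - r_n)T \;\geq\; 2T + (n-1) r_n T,
\end{equation*}
using $k \geq 1$ and $r_n = 2\sin(\pi/n) \leq \sqrt{2} < 2$ for $n \geq 4$. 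Because $F_a \geq 1$ on $H$, the central arcs contribute at least this much, and adding the peripheral $2ngT$ and multiplying by $\mu$ yields the desired $\mu T(2ng + 2 + (n-1) r_n)$.

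The main obstacle is the block/cut argument in the third paragraph: one must simultaneously account for the circumferential arcs used for weak connectivity inside each block and the radial arcs used to patch different blocks together through $CD$. The trade-off is captured by $(k-1)(2 - r_n) \geq 0$, which shows that the extremal configuration $k = 1$---a Hamilton-path chain $SC_0 \to \cdots \to SC_{n-1}$ closed by one radial out of and one radial into $CD$---is tight and attains the target $2T + (n-1) r_n T$ on the central part.
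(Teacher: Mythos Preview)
Your argument is correct. Both you and the paper start identically---peripheral arcs contribute at least $2ngT$ by Observation~\ref{lem:properties}, and the support of $F$ on $\{CD,SC_0,\dots,SC_{n-1}\}$ is strongly connected---but diverge in how strong connectivity is turned into a length bound. The paper combines the crude arc count (a strongly connected digraph on $n+1$ vertices has at least $n+1$ arcs) with the flow-conservation identity $\sum_z F_{(SC_z,CD)}=\sum_z F_{(CD,SC_z)}$ to obtain a single linear inequality relating the total circumferential and radial frequency, and then bounds the cost by taking the minimum coefficient. Your block decomposition is more structural: you partition the subcenters into $k$ circumferential intervals, count at least $n-k$ circumferential arcs via a spanning-forest argument, and at least $2k$ radial arcs because the quotient by the intervals is a strongly connected star centered at $CD$; optimizing over $k\geq 1$ then yields $2T+(n-1)r_nT$ directly. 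The payoff of your route is that it never invokes the flow identity at $CD$ (only the support graph matters) and the trade-off term $(k-1)(2-r_n)\geq 0$ exhibits the extremal configuration explicitly; in particular it delivers the stated bound uniformly for all $n\geq 4$, including $n\in\{4,5\}$ where $r_n>1$ and the paper's minimum-coefficient step is delicate.
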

	\begin{proof}	
		Suppose $F$ is a feasible solution. From Lemma~\ref{lem:properties} we know that $F_{a} = F_{a'} \geq 1$  for all arcs $a, a'$ incident with some periphery $P_i$. This means that 
		\begin{equation}
		\sum_{z = 0}^{n-1} F_{\rho_z(SC_0, P_0)} +F_{\rho_z(P_0, SC_0)}
		\geq 2n.
		\end{equation}Furthermore, 
		at least one of the incoming arcs as well as one of the outgoing arcs at $CD$ needs to be used and these numbers coincide by flow conservation, i.e.,  
		\begin{equation}
		\sum_{z = 0}^{n-1} F_{\rho_z(SC_0,CD)}= \sum_{z = 0}^{n-1} F_{\rho_z(CD, SC_0)} \geq 1
		\label{eq:c}
		\end{equation}
		Now consider the subgraph $W = (\bar{V}, \bar{A}), $ with $\bar{V} = \{CD, SC_0, SC_1,\dots, SC_{n-1}\}$ and $\bar{A} = \{a \in G[\bar{V}]: F_a >0 \}.$  Due to the positive demand and 
		flow conservation for $F,$ $W$ must be a strongly connected and there must be at least $n+1$ arcs in $W,$ so that 
		$$\sum_{z = 0}^{n-1} \left( F_{\rho_z(SC_0,SC_1)} +F_{\rho_z(SC_1,SC_0)} +F_{\rho_z(SC_0,CD)} +F_{\rho_z(CD,SC_0)} \right) \geq n+1.
		$$
		Equation~\eqref{eq:c} and the existence of an arc $(SC_j, CD)$ with $F_{(SC_j,CD)} \geq 1$ 
		imply
		$$\sum_{z = 0}^{n-1} \left(F_{\rho_z(SC_0,SC_1)} +F_{\rho_z(SC_1,SC_0)} \right) + 2\sum_{\substack{z=0\\z \neq j}}^{n-1} F_{\rho_z(SC_0,CD)}  \geq n-1.
		$$
		The operator costs therefore satisfy
		\begin{align*}
		\mu \sum_{a\in A} \tau_a F_a &= \mu \left(
		2\sum_{z=0}^{n-1} Tg F_{\rho_z(SC_0, P_0)}
		+ \sum_{z=0}^{n-1} Tr_n F_{\rho_z(SC_0, SC_1)}
		+ \sum_{z=0}^{n-1} Tr_n F_{\rho_z(SC_1, SC_0)} \right. \\ &\quad\quad \left. 
		+ 2 \sum_{\substack{z=0\\z \neq j}}^{n-1} T F_{\rho_z(SC_0,CD)}
		+ 2 T F_{(SC_j, CD)} \right) \\
		&\geq \mu ( 2n Tg  + (n-1) \min \{2T , Tr_n\} + 2T).
		\end{align*}
		Since $r_n = 2 \sin ( \pi/n) < 2 $ for all $n \geq 4$ the claim follows. 
		\end{proof}
\begin{corollary}[Upper Bound on $\Gamma$]

	For any instance of the Parametric City, and independent on the choice of parameters, except for $g,$ the symmetry gap can be bounded by the constant
	$$ \Gamma \leq  \frac{(1+\sqrt{2})}{g}.$$
	\begin{proof} Lemma~\ref{lem:operatorCostBound} and the fact that the objective value of $ALPP$ is a sum of operator and user costs imply $\OptVal(ALPP) \geq \mu \sum_{a \in A} \tau_a F_a .$ Using this and the fact that $r_n\leq \sqrt{2} <2$ for all $n\in \mathbb{N}_{\geq 4}$ gives rise to 
	the bound 
	
\begin{equation*}
\Gamma \leq \mu \frac{2T(1+r_n)(n-1)}{\OptVal(ALPP)} 
\leq \frac{2(1+r_n)}{ 2g+r_n} \leq \frac{(1+\sqrt{2})}{g}. \qedhere
\end{equation*}
	\end{proof}
\end{corollary}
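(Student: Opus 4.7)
The plan is to bound $\Gamma$ by dividing the bound on the absolute symmetry gap from Lemma~\ref{lem:AbsSymGap} by a lower bound on $\OptVal(ALPP)$ coming from Lemma~\ref{lem:operatorCostBound}. Since the objective of $ALPP$ is the sum of operator and user costs, and the latter are nonnegative, I may replace $\OptVal(ALPP)$ by the operator cost lower bound $\mu T(2ng + 2 + (n-1)r_n)$. The factors of $\mu$ and $T$ then cancel with the numerator, leaving a purely geometric expression that depends only on $n$ and $g$.

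Concretely, I would first write
\[
\Gamma \;=\; \frac{\Gamma_{abs}}{\OptVal(ALPP)} \;\leq\; \frac{2\mu T(1+r_n)(n-1)}{\mu T(2ng + 2 + (n-1)r_n)} \;=\; \frac{2(1+r_n)(n-1)}{2ng + 2 + (n-1)r_n}.
\]
The next step is to eliminate the $(n-1)$ factor. The trivial bound $2ng + 2 \geq 2(n-1)g$ (which holds because $2g + 2 \geq 0$) lets me estimate the denominator from below by $(n-1)(2g + r_n)$, collapsing the ratio to $\frac{2(1+r_n)}{2g + r_n}$. This isolates $g$ in the denominator, which is exactly what the target bound requires.

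The last ingredient is the observation that $r_n = 2\sin(\pi/n)$ is maximized over $n \geq 4$ at $n = 4$, giving $r_n \leq \sqrt{2}$. Cross-multiplying the desired inequality $\frac{2(1+r_n)}{2g+r_n} \leq \frac{1+\sqrt{2}}{g}$ reduces it to $2gr_n \leq 2g\sqrt{2} + (1+\sqrt{2})r_n$, which holds immediately from $r_n \leq \sqrt{2}$ and $r_n \geq 0$.

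The main obstacle is really only the algebraic bookkeeping in the second step: one has to pick a lower bound on the denominator that clears the $(n-1)$ factor without over-weakening, and it is tempting to try to use the $2ng$ term alone (which would fail because the numerator grows linearly in $n$). Once this step is handled, the final inequality follows from a routine application of $r_n \leq \sqrt{2}$ for $n \geq 4$.
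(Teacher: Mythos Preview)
Your proof is correct and follows essentially the same route as the paper: combine the absolute gap bound of Lemma~\ref{lem:AbsSymGap} with the operator-cost lower bound of Lemma~\ref{lem:operatorCostBound}, simplify via $2ng+2\ge 2(n-1)g$ to reach $\frac{2(1+r_n)}{2g+r_n}$, and finish with $r_n\le\sqrt{2}$ for $n\ge4$. The only cosmetic point is that the cancellation of $\mu$ tacitly assumes $\mu>0$; the case $\mu=0$ is trivial since then $\Gamma_{abs}=0$.
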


 \begin{proposition}
 	\label{prop:relSymmetryGapBounds}
 	The relative symmetry gap $\Gamma$ in the Parametric City is bounded as follows:
 	 \begin{align*}
 	\Gamma \leq C_n(\alpha,\gamma) \leq C_n \leq \frac{(1+\sqrt{2})}{g}
 	\end{align*}
 	where 
 	\begin{align*}
 	C_n(\alpha,  \gamma) &:= \min \begin{cases}
 \frac{\mu (n-1)\, 2\, (1+r_n)}{ Y \rho(\alpha, \gamma)},\\
 	\frac{2(1+r_n)}{2g + r_n},
 	\end{cases}
 	\quad\text{\rm and}\quad
 	C_n &:= \min \begin{cases}
 	 \frac{\mu (n-1)\, 2 (1+r_n)}{Y\left(\left(2-\frac{2}{\pi}\right) \frac{\mu}{K} + 1-\mu\right)(1+ga-a)},\\
 	\frac{2(1+r_n)}{2g + r_n}.
 	\end{cases}
 	\end{align*}
 \end{proposition}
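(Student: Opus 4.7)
The plan is to recognize that this proposition is essentially a consolidation of the bounds already established in the preceding corollaries, so the proof amounts to stringing them together and verifying the chain of inequalities term-by-term.

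First I would establish the outer bound $\Gamma \leq C_n(\alpha,\gamma)$. This is a minimum of two expressions, so I need to show that each separately bounds $\Gamma$. The first expression $\mu(n-1)\,2(1+r_n)/(Y\lambda(\alpha,\gamma))$ is precisely the content of Corollary~\ref{cor:tightestBound} (writing $\rho=\lambda$ as in Proposition~\ref{prop:explicitSolutionToSP}). The second expression $2(1+r_n)/(2g+r_n)$ appears as an intermediate step in the proof of the last corollary (Upper Bound on $\Gamma$), obtained by combining Lemma~\ref{lem:AbsSymGap} with Lemma~\ref{lem:operatorCostBound}; I would reproduce that single line, namely
\[
\Gamma \leq \frac{2\mu T(1+r_n)(n-1)}{\OptVal(ALPP)} \leq \frac{2\mu T(1+r_n)(n-1)}{\mu T\bigl(2ng + 2 + (n-1)r_n\bigr)} \leq \frac{2(1+r_n)}{2g+r_n},
\]
where the last step uses that $2ng+2+(n-1)r_n \geq (n-1)(2g+r_n)$. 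Since both upper bounds are valid, so is their minimum, giving $\Gamma \leq C_n(\alpha,\gamma)$.

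Next I would show $C_n(\alpha,\gamma) \leq C_n$, which requires comparing the two minima componentwise. The second term is literally identical in both definitions, so it suffices to handle the first term. Here the lower bound on $\lambda(\alpha,\gamma)$ from Proposition~\ref{prop:explicitSolutionToSP}, namely $\lambda(\alpha,\gamma) \geq (1+ga-a)\bigl((2-2/\pi)\mu/K + 1-\mu\bigr)$, inverts to give
\[
\frac{\mu(n-1)\,2(1+r_n)}{Y\lambda(\alpha,\gamma)} \leq \frac{\mu(n-1)\,2(1+r_n)}{Y\bigl((2-2/\pi)\mu/K + 1-\mu\bigr)(1+ga-a)},
\]
and the claim follows since the minimum is monotone in each argument.

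Finally, the rightmost inequality $C_n \leq (1+\sqrt{2})/g$ is a direct consequence of the second component: since $r_n = 2\sin(\pi/n) \leq \sqrt{2}$ for all $n \geq 4$,
\[
C_n \leq \frac{2(1+r_n)}{2g+r_n} \leq \frac{2(1+\sqrt{2})}{2g} = \frac{1+\sqrt{2}}{g},
\]
exactly as in the proof of the preceding corollary. No step is really an obstacle here; the whole proposition is bookkeeping over the chain Corollary~\ref{cor:tightestBound} $\to$ Corollary~\ref{cor:boundSP} $\to$ Corollary (Upper Bound on $\Gamma$), with the only subtlety being to make sure the two branches of the minimum in $C_n(\alpha,\gamma)$ are sourced correctly.
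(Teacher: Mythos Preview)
Your proposal is correct and matches the paper's approach: Proposition~\ref{prop:relSymmetryGapBounds} is stated in the paper without its own proof, precisely because it is a consolidation of Corollary~\ref{cor:tightestBound}, Corollary~\ref{cor:boundSP}, and the corollary giving the upper bound $(1+\sqrt 2)/g$, and you have strung these together in the intended way. The only detail worth noting is that your final step, bounding $2(1+r_n)/(2g+r_n)$ by $2(1+\sqrt 2)/(2g)$ via $r_n\le\sqrt 2$ in the numerator and $r_n\ge 0$ in the denominator, is exactly how the paper handles this in the proof of the preceding corollary.
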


\begin{figure}[ht]
	\centering
	\begin{subfigure}{.5\textwidth}
		\centering
		\includegraphics[width=\linewidth]{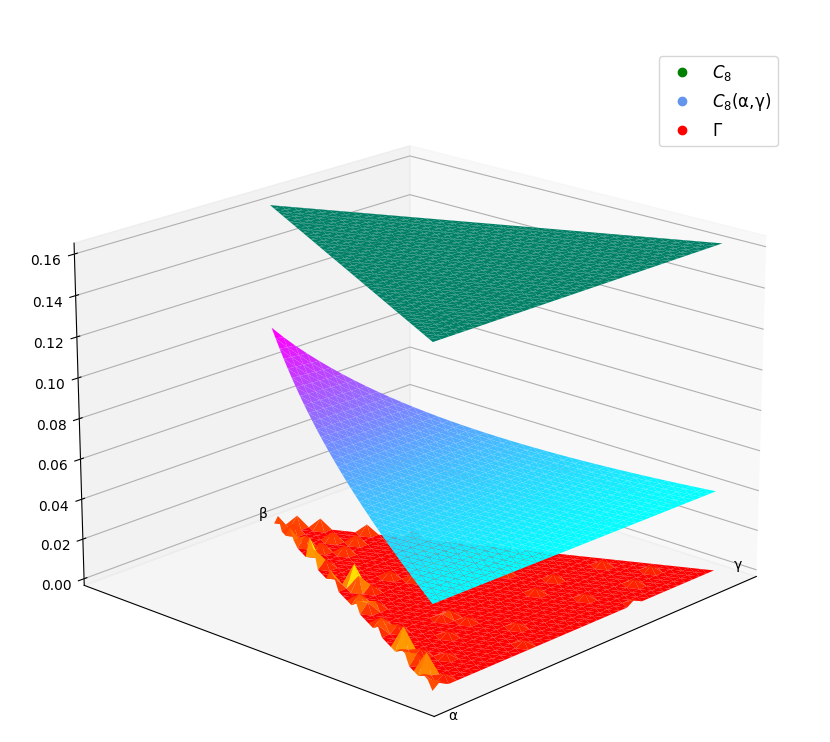}
	\end{subfigure}%
	\begin{subfigure}{.5\textwidth}
		\centering
		\includegraphics[width=\linewidth]{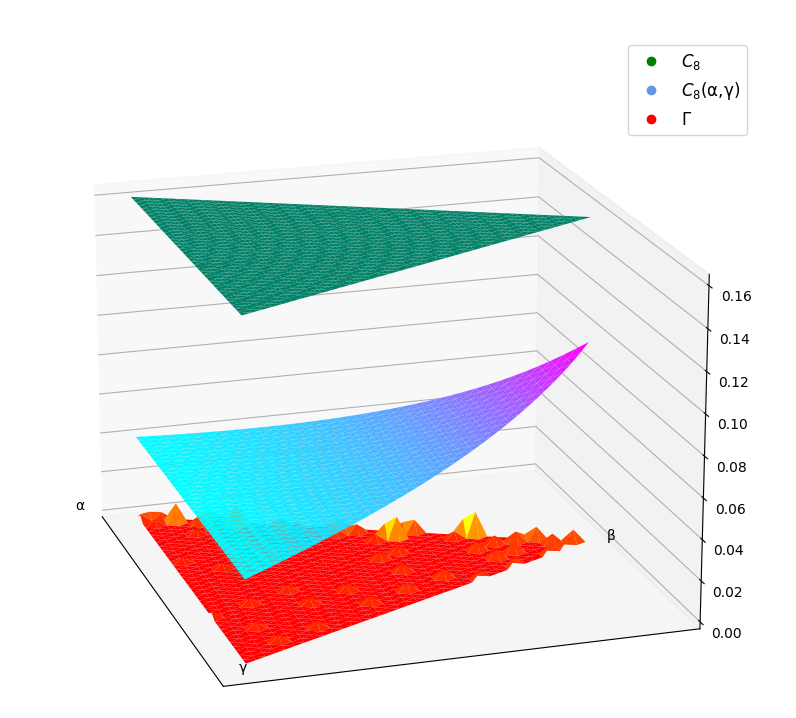}
	\end{subfigure}
	\caption[Symmetry gap -- theoretical bound vs. experimental results]{Theoretical symmetry gap for the Parametric City for $n=8, Y=24000, K=100, g=1/3, \mu = 1, a = 0.8$ in comparison to computational results (cf. \Cref{sec:computation})}
	\label{fig:symmetryGapExperiment}
\end{figure} 

For constant patronage $Y$, the bound $C_n$ increases roughly linearly with the number $n$ of zones, while for constant $n$, the influence of the term $1/Y$ leaves its mark for small $Y$.
Figure~\ref{fig:symmetryGapExperiment} plots the bounds $C_n(\alpha,  \gamma)$ and $C_n$ for (realistic) parameter choices in comparison to the actual gap $\Gamma$ obtained from computational
experiments. 
It is evident that $C_n(\alpha, \gamma)$ is considerably closer to the symmetry gap than $C_n.$ However, even $C_n$ is smaller than $0.16$ independent of the demand, less than $0.12$ as a function of $\alpha, \beta,$ and $\gamma,$ and less than $0.04$ for small $\beta.$ The real gap $\Gamma$ is considerably smaller -- in most cases, the symmetric solutions are optimal, and otherwise, the gap is below $1.22\%$ (see Figure~\ref{fig:3DErrorN8} and Table~\ref{tab:MaxError_n} for additional comparison).
This is good news for city planners.

\subsection{Approximation Algorithm}

%
We will show in this subsection that the symmetrization of an optimal solution of the line planning problem in the Parametric City actually gives rise to an approximation algorithm.
Using the observtion that symmetric solutions have at most four different arc-frequencies (\Cref{lem:fourvalues}) gives rise to an alternative formulation of $ALPP_S$ involving only three integer valued variables. Indeed, we can add the following restrictions to $ALPP_S$ w.l.o.g.:
\begin{align*}
F_a &= F_{\bar{a}}\quad &\text{ for all } a \in \bigcup_{z =0}^{n-1} \{(SC_z, CD)\} \text{ and } \bar{a} \in \bigcup_{z=0}^{n-1} \{(CD, SC_z)\},\\
F_a &= F_{\bar{a}} = \left\lceil \frac{Ya}{nK}\right\rceil \quad &\text{ for all } a \in \bigcup_{z=0}^{n-1} \{(SC_z, P_z)\} \text{ and } \bar{a} \in\bigcup_{z=0}^{n-1} \{(P_z, SC_z)\}.
\end{align*}
The latter is a simple assignment constraint that fixes the frequencies along the peripheral arcs to optimal vaules. 
The resulting model is as follows.

    \begin{definition} [3-Integer Symmetric Line Planning Problem]

    \label{MILP:Sym}
    \begin{mini!}|s|[2]
        {\kern-0.5cm F_{S+}, F_{S-}, F_C ,y\kern-0.5cm}{\mu \, (\kappa_{C} F_{C} + \kappa_S F_{S+} + \kappa_S F_{S-})   + (1-\mu) \sum_{p \in P} \tau_p \textcolor{black}{y_p} + \mu \, \kappa_P F_P}
        {}
        {\textcolor{black} {(ALPP3_{\mathcal{S}})\quad} \nonumber}
        \addConstraint{\sum_{p \in P_{s \to t}} \textcolor{black}{y_p}}{= d_{s,t} \quad \label{con:demand_s}} {\forall (s,t) \in D}
        \addConstraint{\sum_{p \in P_a} \textcolor{black}{y_p} - \textcolor{black}{F_{C}} K}{\leq 0 \label{con:capacity_s1} \quad}{\forall a \in  \{ (CD, SC_z), (SC_z, CD), z \in \ZnZ\}}
            \addConstraint{\sum_{p \in P_a} \textcolor{black}{y_p} - \textcolor{black}{F_{S+}} K}{\leq 0 \label{con:capacity_s2} \quad}{\forall a \in  \{ (SC_z, SC_{z+1}), z \in \ZnZ\}}
                \addConstraint{\sum_{p \in P_a} \textcolor{black}{y_p} - \textcolor{black}{F_{S-}} K}{\leq 0 \label{con:capacity_s3} \quad}{\forall a \in  \{ (SC_{z+1}, SC_z), , z \in \ZnZ\}}
        \addConstraint{\textcolor{black}{F_C, F_{S+}, F_{S-}}}{\leq \Lambda \label{con:arcCapacity_s}}{}
        \addConstraint{\textcolor{black}{F_C, F_{S+}, F_{S-}}}{\in \mathbb{N}\label{con:frequencies_s}}{}
        \addConstraint{\textcolor{black}{y_p} }{\geq 0 \label{con:paths_s}}{\forall p \in P}
    \end{mini!}
 with the constant $F_P = \left\lceil \frac{Ya}{nK} \right\rceil \leq \Lambda$ and cost parameters
    \begin{align*}
        \kappa_C &= 2 n \tau_{(SC_0, CD)} = 2n\,T\\
        \kappa_{S} &= n \tau_{(SC_0, SC_1)} = n \,r_n T\\
        \kappa_P &= 2 n \tau_{(SC_0, P_0)} = 2n\,Tg.\\
    \end{align*}

   \end{definition}

    \begin{proposition}
    \label{prop:polynomialTimeALPP}
    The symmetric line planning problem $ALPP_{\mathcal{S}}$ is solvable in polynomial time.
    \end{proposition}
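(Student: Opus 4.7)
The plan is to work with the compact reformulation $ALPP3_{\mathcal{S}}$, which by Lemma~\ref{lem:fourvalues} together with Observation~\ref{lem:properties} contains only three free integer variables $F_C, F_{S+}, F_{S-}$, since the periphery frequency is pinned to $F_P = \lceil Ya/(nK)\rceil$.

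First, I would replace the exponentially many path-variables $y_p$ by a standard arc-based multi-commodity flow formulation: for every origin-destination pair $(s,t)$ with $d_{s,t}>0$, introduce variables $y^{s,t}_a \geq 0$ for each $a\in A$ together with flow conservation at every intermediate vertex and sinks/sources of value $d_{s,t}$. The capacity constraints \eqref{con:capacity_s1}--\eqref{con:capacity_s3} then read $\sum_{(s,t)} y^{s,t}_a \leq F K$ for the appropriate frequency variable $F$, and the user-cost term $\sum_{p\in P} \tau_p y_p$ is replaced by $\sum_{(s,t)} \sum_{a\in A} \tau_a y^{s,t}_a$. This is an equivalent LP of size polynomial in $n$, and one recovers a path decomposition in polynomial time by standard flow decomposition.

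Second, I would observe that it suffices to search each of $F_C, F_{S+}, F_{S-}$ over $\{0,1,\dots,\bar F\}$ with $\bar F := \min\{\Lambda,\, \lceil Y/K\rceil\}$: since the aggregate passenger flow on any single arc is at most the total patronage $Y$, every capacity constraint is automatically satisfied once $F\geq \lceil Y/K\rceil$, and larger values only inflate the objective. For each of the $O(\bar F^3)$ admissible triples, the remaining problem in the continuous variables is a min-cost multi-commodity flow LP, solvable in polynomial time by the ellipsoid or an interior-point method. Returning the cheapest candidate yields an optimum of $ALPP3_{\mathcal{S}}$, and by construction also of $ALPP_{\mathcal{S}}$.

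The main concern is justifying that the enumeration is genuinely polynomial. Under the standard assumption in line planning that the frequency bound $\Lambda$ is polynomially bounded in the input (in practical instances it is a small constant), the enumeration is immediately polynomial in the encoding size. More generally, because the number of integer variables is the fixed constant three, one can alternatively invoke \emph{Lenstra}-type algorithms for mixed integer linear programming in fixed integer dimension to obtain polynomial runtime even under binary encoding of $\Lambda$ and $Y$. Either route delivers the claim.
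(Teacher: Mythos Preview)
Your proposal is correct, and your second route (Lenstra in fixed integer dimension) is exactly the paper's proof: the paper simply observes that $ALPP3_{\mathcal{S}}$ has a fixed number of integer variables and cites \citet{Lenstra}. Your enumeration route is a genuine alternative, though as you rightly flag it is only polynomial under an additional assumption on the encoding of $\Lambda$ (or $Y/K$), so the Lenstra argument is the one that actually carries the proposition in full generality.

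One point where your write-up is more careful than the paper: the formulation $ALPP3_{\mathcal{S}}$ in Definition~\ref{MILP:Sym} still uses the path variables $y_p$, of which there are exponentially many in $n$. The paper's one-line appeal to Lenstra glosses over this, whereas you explicitly pass to an arc-based multi-commodity flow reformulation of polynomial size before invoking fixed-dimension MILP solvability. That step is necessary for the argument to be complete, and it is good that you include it.
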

    \begin{proof}
        Mixed-integer programming problems with a fixed number of variables are solvable in polynomial time, as was proven by \citet{Lenstra}. 
    \end{proof}

\begin{definition}[Approximation Algorithm $(LPA)$]~
	
	\begin{tabular}{rl}
		Input: & Instance of a Parametric City \\
		Output: &Best symmetric solution $(f,y)$ to $LPP$ if feasible, $\emptyset$ otherwise.\\
		1. & Solve $ALPP_{\mathcal{S}}.$ If feasible, retrieve $(F,y)$, otherwise return $\emptyset$. \\
		2. & Use $(F, y)$ to receive $(f,y)$ as in Lemma~\ref{lem:symmF_implies_symmf}. Return $(f, y)$.
	\end{tabular}
\end{definition}
\begin{theorem}
	$LPA$ is a $\kappa$-factor approximation algorithm for the line planning problem in the Parametric City for fixed $g$, where $\kappa = 1+\frac{(1+\sqrt{2})}{g}.$
	\begin{proof}
	LPA is a polynomial time algorithm, as $ALPP_{\mathcal{S}}$ can be solved in polynomial time, cf. Proposition~\ref{prop:polynomialTimeALPP}, and we can assign frequencies to lines as described in Lemma~\ref{lem:symmF_implies_symmf} in $O(n)$. 
		Let $PC_g$ be the set of all instances of Parametric Cities for a given value of the parameter $g.$ Due to the the upper bound on the symmetry gap from Proposition~\ref{prop:relSymmetryGapBounds}, the performance ratio $\frac{\OptVal(ALPP_{\mathcal{S}})}{\OptVal(ALPP)}$ is bounded by
		
		\begin{equation*}
		\sup_{pc \in PC_g}\frac{\OptVal(ALPP_{\mathcal{S}})}{\OptVal(ALPP)} = \max_{pc \in PC_g} \; (1+\Gamma) \leq 1+\frac{1+\sqrt{2}}{g}.\qedhere
		\end{equation*}
		
	\end{proof}
\end{theorem}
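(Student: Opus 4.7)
The plan is to combine two facts already established earlier in the paper: (i) the symmetric restriction can be solved in polynomial time, and (ii) the relative symmetry gap is uniformly bounded in terms of $g$.

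First, I would verify that $LPA$ runs in polynomial time. Step~1 solves $ALPP_{\mathcal{S}}$, which by Proposition~\ref{prop:polynomialTimeALPP} (via the reformulation $ALPP3_{\mathcal{S}}$ with only three integer variables and \citet{Lenstra}) is polynomial. Step~2 converts the arc-symmetric frequency plan $F$ into a line-symmetric plan $f$ according to Lemma~\ref{lem:symmF_implies_symmf}, which takes $O(n)$ time since there are at most four distinct frequency values to distribute onto canonical symmetric lines. The path variables $y$ are returned unchanged.

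Second, for the approximation guarantee, I would distinguish two cases. If $ALPP$ is infeasible, then by Corollary~\ref{cor:infeasibilityCond} no symmetric solution exists either, so $LPA$ correctly returns $\emptyset$ and there is nothing to prove. Otherwise, let $(f^*,y^*)$ be any optimal solution to $LPP$. Since Step~2 of $LPA$ constructs a line-symmetric $(f,y)$ with the same objective value as the optimal arc-symmetric solution of $ALPP_\mathcal{S}$, the output of $LPA$ has cost exactly $\OptVal(ALPP_{\mathcal{S}})$. Using $\OptVal(ALPP)=\OptVal(LPP)$ and the definition of the relative symmetry gap,
\begin{equation*}
\frac{\text{cost}(f,y)}{\text{cost}(f^*,y^*)} = \frac{\OptVal(ALPP_{\mathcal{S}})}{\OptVal(ALPP)} = 1+\Gamma.
\end{equation*}

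Finally, I would invoke Proposition~\ref{prop:relSymmetryGapBounds}, which gives $\Gamma\leq (1+\sqrt{2})/g$ for every instance with the prescribed value of $g$, independently of all other parameters. Taking the supremum over $PC_g$ yields
\begin{equation*}
\sup_{pc\in PC_g}\frac{\OptVal(ALPP_{\mathcal{S}})}{\OptVal(ALPP)}\;\leq\;1+\frac{1+\sqrt{2}}{g}\;=\;\kappa,
\end{equation*}
which is the desired approximation ratio. There is no real obstacle here, as all technical work has been done in the previous lemmas; the only subtlety to double-check is the infeasibility case, which is cleanly handled by Corollary~\ref{cor:infeasibilityCond} so that $LPA$ returns $\emptyset$ exactly when $LPP$ has no solution.
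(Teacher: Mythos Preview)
Your proposal is correct and follows essentially the same approach as the paper's proof: polynomial runtime via Proposition~\ref{prop:polynomialTimeALPP} and Lemma~\ref{lem:symmF_implies_symmf}, then bounding the performance ratio by $1+\Gamma\leq 1+(1+\sqrt{2})/g$ via Proposition~\ref{prop:relSymmetryGapBounds}. You add a little extra care with the infeasibility case and the identification $\OptVal(ALPP)=\OptVal(LPP)$, but the argument is otherwise identical.
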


We now give a family of instances that shows that the symmetry gap is indeed unbounded in the worst case. 
\begin{proposition}
	Let $PC$ be the set of all instances of the Parametric City. Then the worst case performance ratio of the approximation algorithm $LPA$ is unbounded, i.e.,
	$$\sup_{pc \in PC} \frac{\OptVal(ALPP_{\mathcal{S}})}{\OptVal(ALPP)} = \infty.$$
	\end{proposition}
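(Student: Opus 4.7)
The plan is to exhibit, for each $n \geq 4$, a Parametric City instance in which the asymmetric optimum is essentially a single vehicle performing a Hamiltonian-like tour, while any symmetric plan is forced by symmetry to deploy at least one vehicle on every one of the $2n$ central spokes. Concretely, I would fix $\mu = 1$ (so only operator cost enters the objective), vehicle capacity $K \geq Y$ (a single vehicle suffices for all passengers), street capacity $\Lambda \geq 1$, any strictly positive demand pattern, and take the offset factor to shrink with $n$ as $g = g_n := 1/n$. Forcing $g \to 0$ is essential, since the preceding corollary shows $\Gamma \leq (1+\sqrt{2})/g$ whenever $g$ is fixed.

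For the upper bound on $\OptVal(ALPP)$, I would use the explicit line plan consisting of (i) the simple directed cycle $l^\ast := CD \to SC_1 \to SC_2 \to \dots \to SC_{n-1} \to SC_0 \to CD$ with frequency one, and (ii) for each $i \in \ZnZ$ the two-arc peripheral shuttle $SC_i \to P_i \to SC_i$ with frequency one. Passengers are routed by boarding at the origin (using the shuttles to collect periphery passengers) and riding along $l^\ast$ to the destination, taking the long way around when necessary. Because $K \geq Y$ bounds the total flow on every arc, the capacity constraints~\eqref{con:capacity_A} hold, and every arc carries frequency at most $1 \leq \Lambda$. The induced operator cost is
\[
\mu\bigl(\tau_{l^\ast} + \textstyle\sum_{i\in\ZnZ} 2\tau_{(SC_i,P_i)}\bigr) \;=\; T\bigl(2 + (n-1)r_n + 2ng\bigr),
\]
which, with $g_n = 1/n$, stays bounded as $n \to \infty$ since $(n-1)r_n \to 2\pi$.

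For the lower bound on $\OptVal(ALPP_{\mathcal S})$, I would invoke Observation~\ref{lem:properties}, which guarantees $F_{(SC_k,CD)} > 0$ for some $k$. In the arc-symmetric case, \Cref{lem:fourvalues} forces all $F_{(SC_j,CD)}$ to share one integer value $F_C$, so $F_C \geq 1$; the same holds for the anti-parallel arcs. Hence the $2n$ central spokes alone contribute at least $2nT$ to the operator cost, giving $\OptVal(ALPP_{\mathcal S}) \geq 2nT$. Combining the two bounds,
\[
\frac{\OptVal(ALPP_{\mathcal S})}{\OptVal(ALPP)} \;\geq\; \frac{2n}{2ng + (n-1)r_n + 2},
\]
and with $g_n = 1/n$ the denominator stays bounded (by $4 + 2\pi + o(1)$) while the numerator diverges, so the supremum over $PC$ is infinite. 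The only real obstacle is verifying feasibility of the ring-plus-shuttle plan in full detail, especially that every arc flow remains within capacity $K$ for passengers who must traverse $l^\ast$ the long way around; this ultimately reduces to the choice $K \geq Y$ dominating the total patronage.
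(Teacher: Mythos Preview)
Your proposal is correct and follows essentially the same construction as the paper: both take $\mu=1$, $K\geq Y$, $g=1/n$, use the ring-plus-peripheral-shuttle plan to bound $\OptVal(ALPP)$ above by $T(2+(n-1)r_n+2ng)$, and use $F_C\geq 1$ (forced by symmetry and Observation~\ref{lem:properties}) to bound $\OptVal(ALPP_{\mathcal S})$ below by $2nT$. The paper additionally verifies that both plans are in fact optimal for their respective problems (invoking the lower bound of Lemma~\ref{lem:operatorCostBound} for the asymmetric side and adding the peripheral contribution $2ngT$ on the symmetric side), but your one-sided bounds already suffice for the divergence of the ratio.
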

	\begin{proof}
		Consider a Parametric City with parameters $Y, \alpha, \gamma, a, T.$ Set the scalarization parameter to $\mu = 1$  such that only operator costs are considered. Further, choose any $n \in \mathbb{N}_{\geq 4}$ and a large $K,$ e.g., $K=Y,$ which means that the total patronage can fit into a single vehicle. Lastly, choose a small $g$, say $g=1/n.$ 
		
		As all passengers fit into one vehicle, the frequencies of an optimal solution of $ALPP$ and $ALPP_{\mathcal{S}}$ on any arc are either $0$ or $1.$ 
		Consider the following line plan: Set $F_{(SC_0,CD)} = 1$  and $F_{(CD, SC_1)} = 1,$ all other arcs incident to $CD$ are set to zero. Further, assign $F_{\rho_z(SC_0,SC_1)} = 1$ for all $z \in \ZnZ$, $z \neq 0$, i.e., use all inter-subcenter arcs in counter clockwise direction except for $(SC_0,SC_1).$ Lastly, use all peripheral arcs, that is $F_a = 1$ for all $a \in \{ (P_z,SC_z), (SC_z,P_z), z\in \ZnZ\}.$ Any other arcs set to zero. 
		This results in the representative line plan in Figure~\ref{fig:extrememilpa}.  
		%
		The cost function reduces to pure operator costs since $\mu = 1.$ The objective value is therefore $$cost(F,y) = 2ng+ (n-1)r_n +2$$ which is exactly the lower bound for operator costs of Lemma~\ref{lem:operatorCostBound} and must consequently be optimal. Up to rotation and reflection, this solution is the same for all $n\geq 4.$

\begin{figure}
\centering
\begin{subfigure}{.5\textwidth}
\centering
\begin{tikzpicture}[myn/.style={draw,circle,fill=none, font=\footnotesize, outer sep=2pt, inner sep=0pt,minimum size=0.4cm},
dot/.style={circle, draw, fill=black, inner sep=0pt, minimum width=3pt, outer sep=5pt},]
\tikzmath{\n = 8; \nm1 = \n-1; \nm2 = \n-2; \T = 2; \gT = \T*(1+1/2);}
			\node[myn]
			(CD) at (0,0) {\tiny{$CD$}};
			\foreach \x in {0,...,\nm1}{
				\node[myn]
				(SC\x) at ({\x*360/\n}: \T cm) {\tiny{$SC_{\x}$}};
			}
			
			\foreach \x in {0,...,\nm1}{%
				\node[myn]
				(P\x) at ({\x*360/\n}: \gT cm) {\tiny{$P_{\x}$}};
			}
			\foreach \x in {0,...,\nm1}{%
					
				\DoubleLine{SC\x}{P\x}{<-,darkblue}{}{-> ,darkblue}{};
				
				}
				
			\foreach \x in {1,...,\nm1}{%
			    \pgfmathtruncatemacro {\y}{mod(round (1+\x),\n)}
			    	\draw[darkblue,->] (SC\x) -- (SC\y);
			    	}

			\draw[darkblue,->] (SC0) -- (CD); 
			\draw[darkblue,->] (CD) to (SC1);
        
\end{tikzpicture}
    \caption{$ALPP$}
    \label{fig:extrememilpa}
\end{subfigure}%
\begin{subfigure}{.5\textwidth} 
\centering
\begin{tikzpicture}[myn/.style={draw,circle,fill=none, font=\footnotesize, outer sep=2pt, inner sep=0pt,minimum size=0.4cm},
dot/.style={circle, draw, fill=black, inner sep=0pt, minimum width=3pt, outer sep=5pt},]
\tikzmath{\n = 8; \nm1 = \n-1; \nm2 = \n-2; \T = 2; \gT = \T*(1+1/2);}
			\node[myn]
			(CD) at (0,0) {\tiny{$CD$}};
			\foreach \x in {0,...,\nm1}{
				\node[myn]
				(SC\x) at ({\x*360/\n}: \T cm) {\tiny{$SC_{\x}$}};
			}
			
			\foreach \x in {0,...,\nm1}{%
				\node[myn]
				(P\x) at ({\x*360/\n}: \gT cm) {\tiny{$P_{\x}$}};
			}
			\foreach \x in {0,...,\nm1}{%
					
				\DoubleLine{SC\x}{P\x}{<-,darkblue}{}{-> ,darkblue}{};
				\DoubleLine{SC\x}{CD} {<-,darkblue}{}{-> ,darkblue}{};
				}

\end{tikzpicture}
    \caption{$ALPP_{\mathcal{S}}$}
    \label{fig:extrememilps}
\end{subfigure}%
\caption{Optimal frequency plans for the extreme example: $n\geq4, K = Y, g=1/n$ for any choice of $Y, a, \alpha, \gamma, T$; blue arcs indicate Frequency $1.$}
\end{figure}
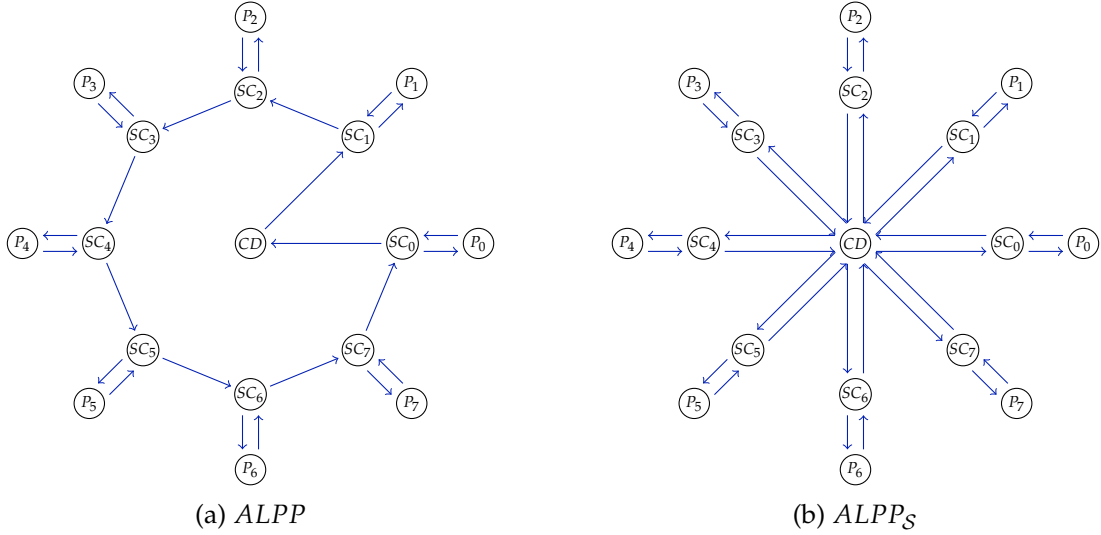	

		 From previous considerations we already know that in a solution to $ALPP_{\mathcal S}$, all arcs incident to the center must have positive and, in particular, equal frequency. As a minimal requirement for a feasible solution we thus have $F_P\geq1$ and $F_C \geq 1.$ Thus, assign frequency $1$ to all arcs incident to a periphery or the central business district, set all other arcs to zero. This line plan gives rise to a feasible solution -- see Figure~\ref{fig:extrememilps} for reference: Each node can be reached from any other node. Again, since the vehicle capacity is large enough to fit all passengers of the whole system, we can find admissible passenger paths. 
		 The associated objective value is $$\OptVal(ALPP_{\mathcal{S}}) = T(2ng+2n).$$
		Thus, for the chosen instance of the Parametric City, we have the ratio \begin{align*}
		\frac{\OptVal(ALPP_{\mathcal{S}})}{\OptVal(ALPP)} = \frac{T(2ng+2n)}{T(2ng+(n-1)r_n+2)} =\frac{2ng+2n}{2ng+(n-1)r_n+2}
		\end{align*}
		As we chose $g =1/n$ and using the fact that $r_n(n-1) \leq 2\pi,$ we find the lower bound 
		\begin{align*}
		\frac{2ng+2n}{2ng+(n-1)r_n+2} = \frac{2+2n}{2+(n-1)r_n+2} \geq \frac{2+2n}{2+2 \pi +2} = \frac{1+n}{2+\pi}. 
		\end{align*}
		
		As $n$ was chosen freely, this lower bound becomes arbitrarily large for increasing $n.$ The maximal performance ratio over \emph{all} instances of the Parametric City hence goes to infinity. 
	\end{proof}

\section{Computational Study}
\label{sec:computation}

How important are asymmetric solutions and how much better are they really? We study this question in this section computationally by considering a large number of instances of $ALPP$ with varying inputs. In all instances, we choose a total patronage of $Y=24000,$ from which $a=0.8$ originates in the peripheries. The distance between the subcenters and the central business district is $T= 30$ and the peripheries are at distance $gT$ from the subcenters, with $g=1/3.$ We vary the rest of the parameters: the vehicle capacity $K,$  the number of zones $n$, and the scalarization parameter $\mu.$ We then compare the results in maps dependent on $\alpha, \beta, \gamma$. 
%
We solve both $ALPP$ and $ALPP_{\mathcal{S}}$ to obtain the classification of each instance of the Parametric City for all $\alpha, \gamma \in [0.025,0.95]$ such that $\alpha+\beta +\gamma =1$ with a step size of $0.025.$ We choose the MIP solver Gurobi~9~\citep{gurobi} and consider a solution to be optimal, when the relative MIP optimality gap is below a tolerance of $10^{-4}.$ 


\subsection{Sensitivity Analysis}
\label{subsec:sensitivity}

\subsubsection{Influence of the Scalarization Parameter $\mu$}

The first set of computational experiments considers the influence of the parameter $\mu$. We evaluated the Parametric City for $n = 8$ for four different choices of $\mu,$ namely for the two extreme cases of $\mu = 0$ and $\mu=1,$ as well as $\mu = 0.5$ and $0.75,$ see Figure~\ref{fig:typeMaps_depMU}.

\begin{figure}[ht]
 \centering
 \begin{subfigure}{0.49\textwidth}
 	\includegraphics[width=\linewidth]{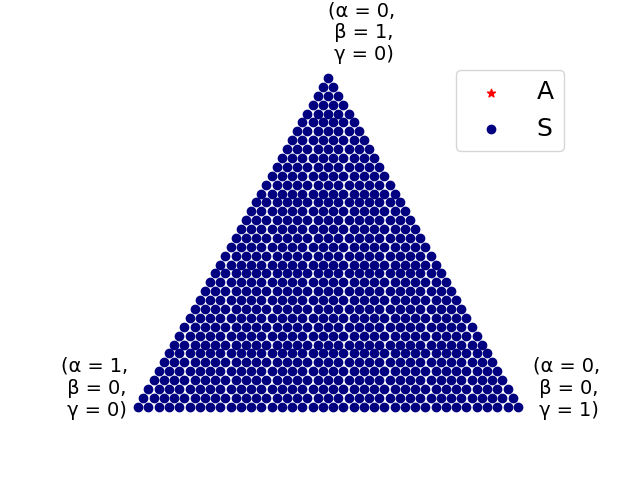}
 	\caption{$\mu = 0$}
 	\label{fig:mu0}
 \end{subfigure}%
 ~
 \begin{subfigure}{0.49\textwidth}
 	\includegraphics[width=\linewidth]{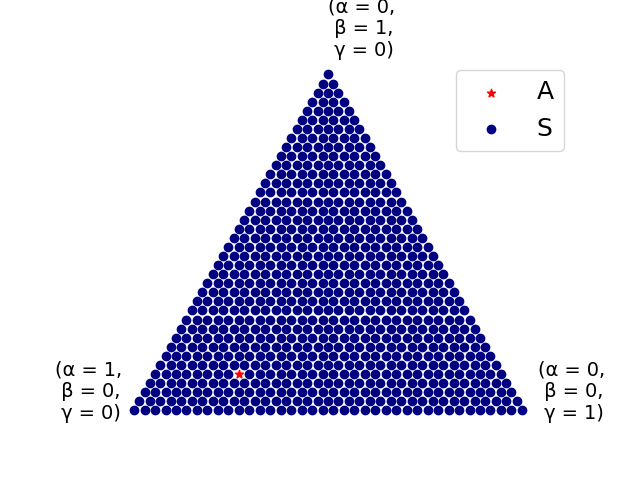}
 	\caption{$\mu = 0.5$}
 \end{subfigure}
 \begin{subfigure}{0.49\textwidth}
 	\includegraphics[width=\linewidth]{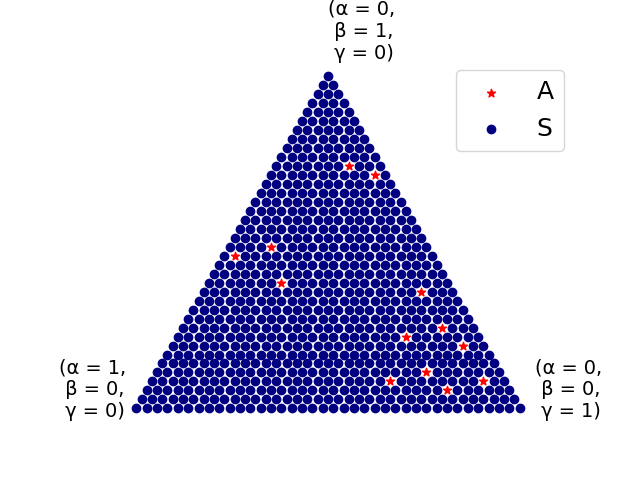}
 	\caption{$\mu =0.75$}
 \end{subfigure}%
 ~
 \begin{subfigure}{0.49\textwidth}
 	\includegraphics[width=\linewidth]{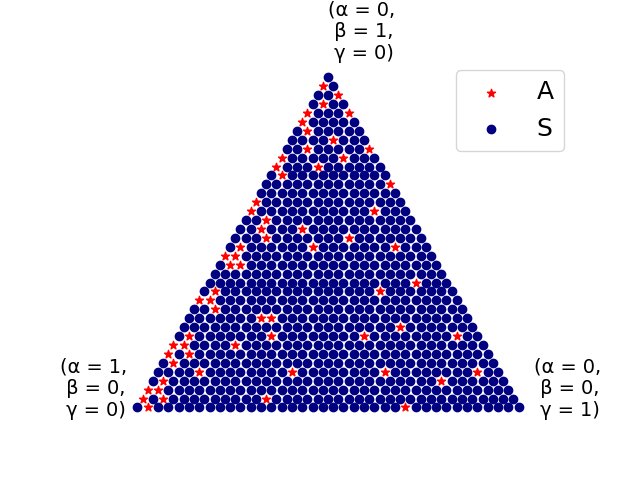}
 	\caption{$\mu = 1$}
 	\label{fig:mu1n8K100}
 \end{subfigure}
 \caption[Classification of optimal solutions with respect to $\mu$]{Optimal solutions (A/S: a-/symmetric) for $n=8, K=100$ with respect to $\mu$}
 \label{fig:typeMaps_depMU}
\end{figure}


The number of symmetric solutions greatly outweighs the number of asymmetric ones. For $\mu = 0$ there are none, for $\mu= 0.5$ there is only one, for $\mu = 0.75$ there $13$, and for $\mu = 1$ approximately $8.5\%$ are asymmetric solutions. Asymmetric optimal line plans are hence rare, but not singular or exceptional. Except for $\mu=0,$ i.e., when operator costs are ignored, one cannot assume that optimal solutions are symmetric. 
Indeed, the number of asymmetric cases increases with $\mu.$ This is not particularly surprising: The larger $\mu,$ the more focus is on the operators cost and the less expensive is it to reroute people from their preferred, shortest path to some longer detour in order to decrease the frequency along some arcs.

For the following computations, we fix the scalarization parameter to $\mu=1,$ because this choice produces  the most diverse results with respect to symmetry. This setting is also relevant from a practical point of view as well: More often than not, the main objective 
is the minimization of operating costs while providing enough service to cover demand.

\subsubsection{Influence of the Number of Zones $n$}

To examine how the number of zones affects the type of solutions, we compared them for $n=\{4,5,6,7,8\}.$ The results can be seen in Figure~\ref{fig:typeMaps_depN} and Figure~\ref{fig:mu1n8K100} for $n = 8$.
  \begin{figure}[ht]
	\centering
	\begin{subfigure}{0.49\textwidth}
		\includegraphics[width=\linewidth]{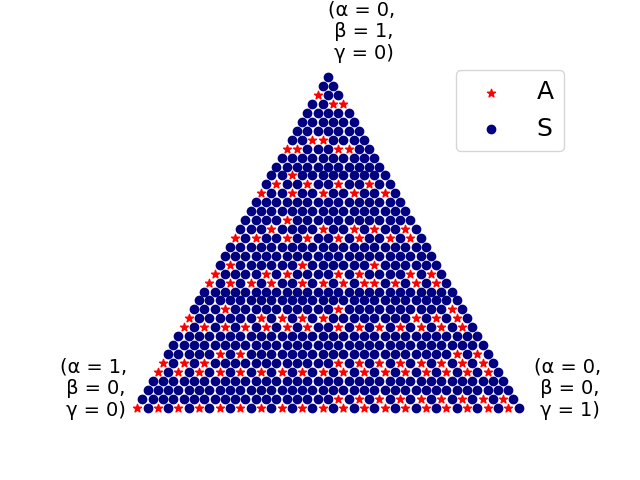}
		\caption{n=4}
		\label{fig:n4ref}
	\end{subfigure}%
	~
	\begin{subfigure}{0.49\textwidth}
		\includegraphics[width=\linewidth]{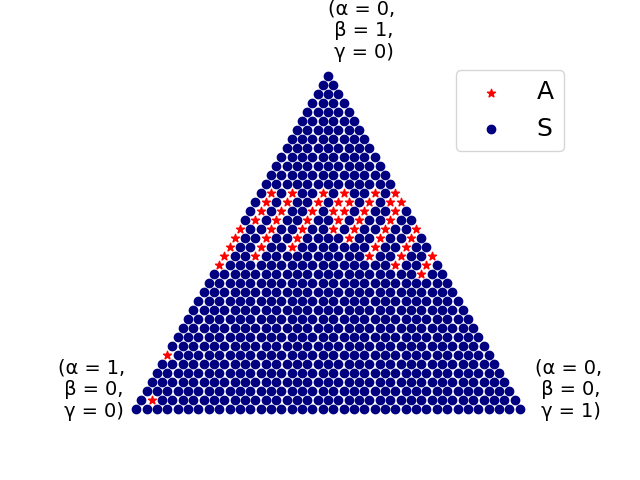}
		\caption{n=5}
		\label{fig:n5ref}
	\end{subfigure}
	\begin{subfigure}{0.49\textwidth}
		\includegraphics[width=\linewidth]{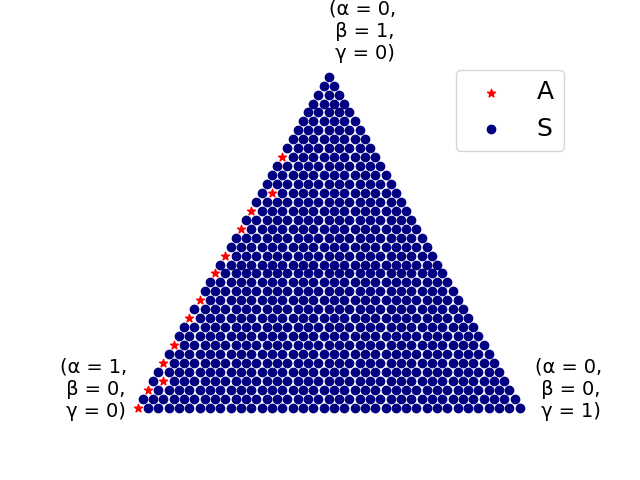}
		\caption{n=6}
		\label{fig:n6ref}
	\end{subfigure}%
	~
	\begin{subfigure}{0.49\textwidth}
		\includegraphics[width=\linewidth]{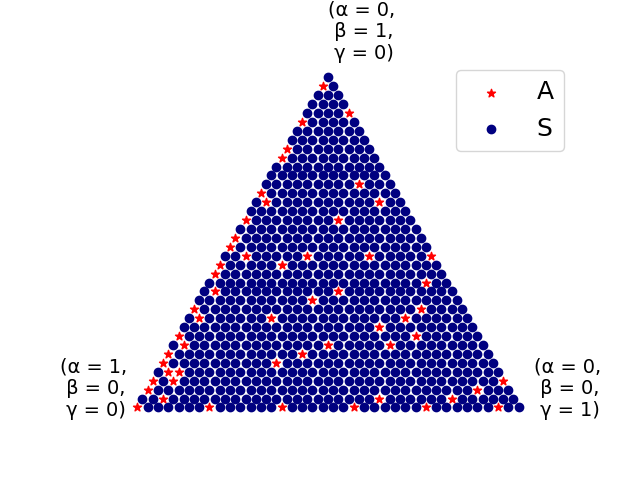}
		\caption{n=7}
		\label{fig:n7ref}
	\end{subfigure}
	\caption[Classification of optimal solutions in dependence of $n$]{Classification of optimal solutions for $K=100, \mu =1$ in dependence of $n$}
	\label{fig:typeMaps_depN}
\end{figure}

The results are somewhat surprising. There are asymmetric solutions for all choices of $n$, even for the ``maximally symmetric'' case $n=6$, in which the distance between two subcenters is the same as between a subcenter and the central business district. The Parametric City with six zones is the only one in which the shortest path between two vertices is not unique, which gives rise to more optimal solutions. One could expect more of them to be symmetric. And indeed, the number of asymmetric solutions is reduced in comparison to other choices of $n,$ but they do exist.

Another noticeable aspect is the accumulation of asymmetric cases on the left-hand side of the parameter triangle. This corresponds to low values of $\gamma.$, i.e., the fraction of total trips from a periphery to \emph{all} other subcenters. The demand between a periphery and a different subcenter is scaled by $\frac{1}{n-1}$ (cf. Table~\ref{tab:OD}), and is thus even smaller. As a consequence, we have more occurrences in which some of these few passengers can be rerouted to other arcs, which still have available capacity, such that the frequency along the direct path can be reduced. This correlates with an observation we will make when investigating  the influence of the vehicle capacity $K.$

\subsubsection{Influence of the Vehicle Capacity $K$}
We now study the impact of the vehicle capacity $K.$
For our sample size, it turns
out that the absolute number of asymmetric solutions increases with $K.$ For example, for $K = 50$ there are approximately 6.3\% asymmetric solutions,
while for $K = 100$ we find 8.5\%, and for $K = 150$ even 11.2\% are asymmetric. 
This can be explained in the following way: When the vehicle size is larger, it is more likely that there is still some capacity available to accommodate more passengers by detouring them via non-direct paths and thus reduce the frequency elsewhere. This is a similar phenomenon as for small values  of $\gamma$.

	\begin{figure}[ht]
 		\centering
  \begin{subfigure}[b]{0.49\textwidth}
 	\includegraphics[width=\linewidth]{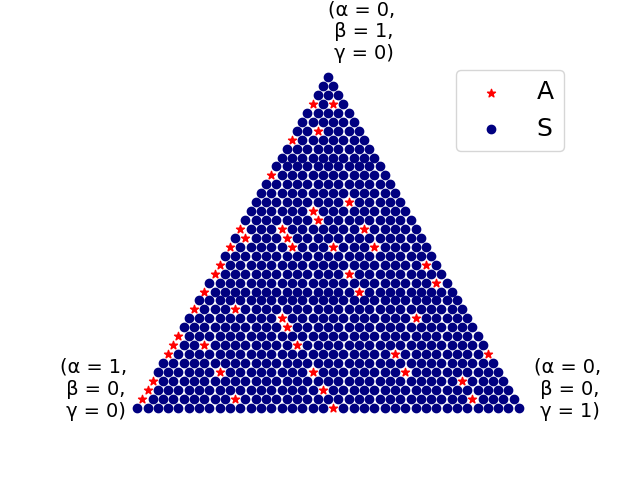}
 	\caption{K=50}
 	\label{fig:K50}
 \end{subfigure}%
~
 \begin{subfigure}[b]{0.49\textwidth}
 	\includegraphics[width=\linewidth]{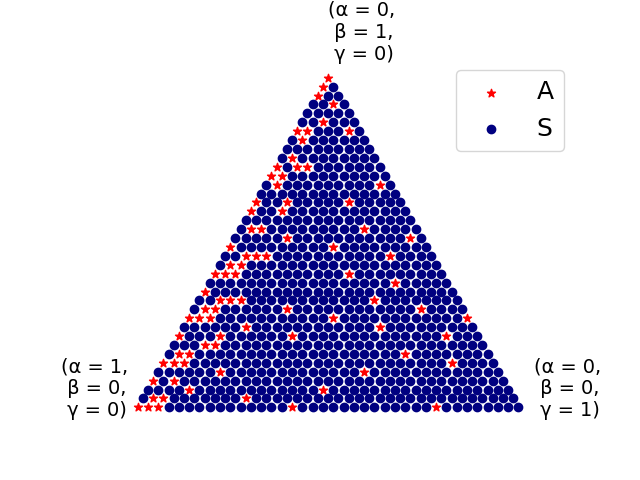}
 	\caption{K=150}
 	\label{fig:K150}
 \end{subfigure}
 \caption[Classification of optimal solutions in dependence of $K$]{Classification of optimal solutions for $n=8, \mu =1$ for in dependence of parameter $K$}
 \label{fig:typeMaps_depK}
\end{figure}

\subsection{Qualitative Analysis}
\label{subsec:qualitative}


This section studies the influence of the solution type on the overall costs. 
We plot the optimal values of $ALPP_{\mathcal{S}}$ and $ALPP$ in a $3D$-plot depending on the demand parameters $\alpha, \beta$ and $\gamma.$ The cost of $ALPP$ is marked with red dots, the cost of $ALPP_{\mathcal{S}}$ with blue ones. If the costs are equal, the red and blue dots are merged into a purple one for better readability. We include the markers $\alpha =1 , \beta =1$ and $\gamma =1$ to indicate the coordinates: At $\alpha = 1$ we have the coordinates $(\alpha, \beta, \gamma) =(1,0,0)$ etc.
It turns out the absolute symmetry gap is so small that the difference is hard to see with the bare eye.
We therefore rather plot relative symmetry gaps 
as in Figure~\ref{fig:3DErrorN8}\footnote{The red markers in the graph are included just for orientation: They are put at $0$ for depth perception such that the deviation of symmetric solutions from the optimum can be easier ascribed to the corresponding $(\alpha,\beta, \gamma)$ tuple.}; %
there, the relative gap is less than $0.012,$ i.e., symmetric solutions deviate from the optimal solutions by no more than $1.2\%.$


\begin{figure}[ht]
	\centering
	\includegraphics[width=.8\linewidth]{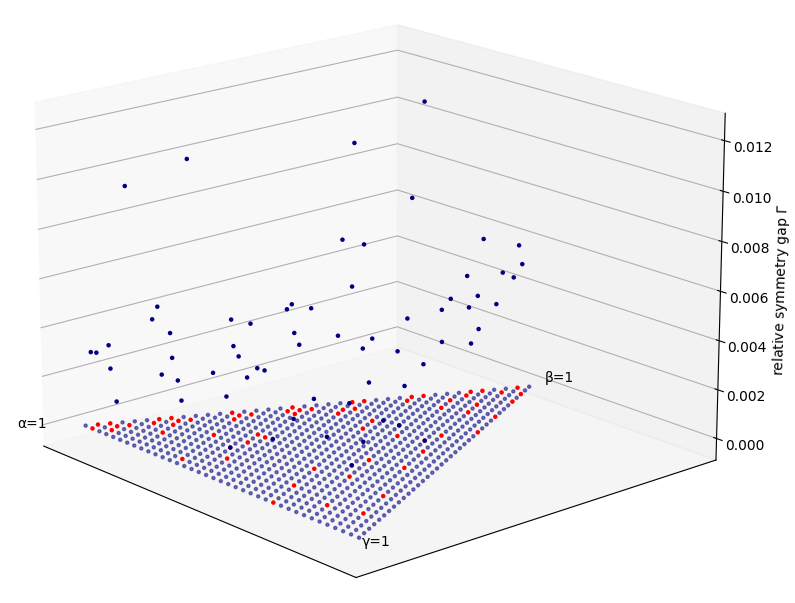}
	\caption[Relative gap of symmetric solutions]{Relative symmetry gap 
	of solutions for $n = 8, K = 100, \mu = 1$}
	\label{fig:3DErrorN8}
\end{figure}

The relative symmetry gaps look very similar for all instances of the previous section, such that we discuss Figure~\ref{fig:3DErrorN8} with $n=8, K=100, \mu = 1$ as a general representative. Varying one parameter and fixing all else, we make the following observations: 

\begin{itemize}
\item Total costs are lowest in the right corner for all data sets, for large $\beta$, i.e., when most people travel from the periphery only to their own subcenter. This is an expected result, since the frequency on any arc $(P_i, SC_i)$ is $F_{(P_i,  SC_i)} = \left \lceil \frac{Y a}{n K} \right \rceil$ by Lemma~\ref{lem:properties} -- independent of $\alpha, \beta, \gamma.$ Consequently, if $\beta$ is large, most of the passengers with origin in the peripheries will travel merely over one peripheral arc, while for smaller $\beta$ they will use other arcs as well, contributing to a larger total number of frequencies.  

\item The symmetry gap seems to increase with $K$: Table~\ref{tab:MaxError_K} records the maximal gap over all tested $\alpha, \beta, \gamma$ for each Parametric City with $\mu =1$ and for $n =6$ and $n = 8$ for different choices of $K.$
\item In comparison to $n$ we observe maximal gap values according to Table~\ref{tab:MaxError_n}: They increase with $n$ until $n=6$ and decrease afterwards.
	\item For increasing $\mu,$ the total costs decrease on average, as well as the minimum and maximum values. The latter correlates with the general behavior of the lower bound gained from the cost of the uncapacitated minimum-cost flow problem. In contrast, the maximum relative gap increases with $\mu$ in our experiments, as can be seen in Table~\ref{tab:MaxError_mu}. 

\end{itemize}
\begin{table}[ht]
	\begin{subtable}{\textwidth}
	\begin{center}
		\begin{tabular}{|l|ccc|ccc|}
			\hline
			&\multicolumn{3}{c|}{$n = 6$}&\multicolumn{3}{c|}{$n=8$}\\
			\hline
			$K$&50&100&150& 50&100&150\\
			\hline
			max. gap&0.82\%&1.66\% & 2.38\% & 0.51\% & 1.22\% & 3.21\% \\
			\hline
		\end{tabular}
	\caption[Maximal symmetry gap in dependence of $K$]{In dependence of $K$  for fixed $(\mu = 1)$}
	\label{tab:MaxError_K}	
	\end{center}
\end{subtable}\vspace{0.5cm}
\begin{subtable}{\textwidth}
	\begin{center}
		\begin{tabular}{|l| c c c c c|}
			\hline
			$n$& 4 & 5 & 6 & 7 & 8\\
			\hline
			max. gap & 0.27\% & 0.58\% & 1.65\% & 1.25\% & 1.22\%\\
			\hline
		\end{tabular}	
	\caption[Maximal experimental symmetry gap in dependence of $n$]{In dependence of $n$ with $(K=100,\mu =1)$}
	\label{tab:MaxError_n}
	\end{center}
\end{subtable}\vspace{0.5cm}
  \begin{subtable}{\textwidth}
 	\begin{center}
 		\begin{tabular}{|l| r r r r|}
 			\hline
 			&$\mu=0$ & $\mu = 0.5$ & $\mu\approx 0.878$ & $\mu=1$\\
 			\hline
 			average cost & 855 477.3& 434 727.5&116 055.4 &17 943.7 \\
 			min. cost &409 138.0 &   208 296.3&56 167.2 & 6 974.8\\
 			max. cost & 1 250 409.4&632 742.0 &165 634.4 & 13 181.0\\
 			\hline
 			max. gap & 0.0\% & 0.000611\% & 0.1\% & 1.22\%\\
 			\hline
 		\end{tabular}
 	\caption[Maximal experimental symmetry gap in dependence of $\mu$]{In dependence of $\mu$ $(K =100, n =8)$}
 	\label{tab:MaxError_mu}
 	\end{center}
 \end{subtable}
	\caption[Maximal symmetry gap w.r.t. symmetry in computational results]{Maximal gap of symmetric solutions of computational results dependent on different parameter choices}
	\label{tab:MaxError}
\end{table} 

\subsection{Comparison to Fielbaum et al.}
\label{subsec:fielbaum}

The next batch of computations compares our results to those of Fielbaum et al. In multiple publications \citep{FIELBAUM2016298, FielbaumTransitLineStructures,Fielbaum:Thesis}, they evaluate the performance of a transportation network with respect to the Value of the Resources Consumed (VRC), a socio-economic cost function first introduced and analyzed by \citet{Jansson1980} and later adjusted to include operator costs and vehicle sizes by \citet{JaraDiazGschwender}. The $VRC$ accounts mainly for distance-based travel times from both an operator's and a user's perspective, but it also measures delay due to boarding or alighting, as well as waiting times at vehicle arrivals. For a single line, the $VRC$ can be computed by a formula. The article by \citet{FIELBAUM2016298} generalizes the $VRC$ to a multiple-line model and applies it to the Parametric City by numerical minimization.

The transportation network is obtained by solving a variation of the model $LPP$ with an objective that is supposed to approximate VCR. In solving the line planning problem, Fielbaum et al. use heuristc approaches and make the general assumption that one can take ``advantage of the symmetry of the city" \cite[p. 12]{Fielbaum:Thesis}. They optimize over one zone only and replicate the optimal solution in the other zones.

The line planning problem that Fielbaum et al. consider 
differs slightly from ours. 
 Firstly, the line pool differs from ours: They use only ``lines that are shor\-test-path between their origins and destinations'' \citep[p. 14]{Fielbaum:Thesis} to reduce its size. As was described in \Cref{sec:linePlanning} our arc-based formulation of the line planning problem allows us to recreate any line corresponding to a directed cycle in the graph. We therefore consider a larger line pool.
Secondly,
they use fixed values to balance operator costs $c_o = 10.65\$/h$ against user costs $p_v =1.48 \$/h.$
In their notation the objective is 
 $$`` \sum_{R \in U} p_v \,t_R \, y_R + \sum_{l \in L} c_0 \, B_l  \text{''}  \quad \text{\cite[p. 87]{Fielbaum:Thesis}.}$$
 This objective can be directly translated into our notation:  $U$ is the set of paths, so it corresponds to our set $P$, $L$ is the set of lines, $t_R$ is the time spent traveling along path $R\in U,$ which means that it corresponds to our parameter $\tau_p$ for $p \in P.$ The parameter $B_l$  is the fleet size of line $l.$ As they define frequency as ``total fleet size divided by cycle time'' \cite[p. 6]{Fielbaum:Thesis}, this corresponds to $B_l = c_l f_l.$ Therefore, Fielbaum's cost function in our notation can be written as
 $$(c_0+p_v) \left(  \frac{p_v}{c_0+p_v} \sum_{p \in P} \tau_p y_p + \frac{c_0}{c_0+p_v} \sum_{l \in L} \tau_l f_l \right).$$
 
  By choosing $\mu = \frac{c_0}{c_0+p_v}$ this objective corresponds exactly to the one of $LPP$ up to the constant factor $c_0+p_v$. A constant factor in a linear objective function has no influence on the optimal solution $(f,y)$ of the problem. Therefore, by solving $ALPP$ with $\mu = \frac{c_0}{c_0+p_v} \approx 0.878$ we consider a comparable objective function.


 \begin{figure}[hbtp]
 \centering
 \begin{subfigure}[b]{0.46\textwidth}
 	\includegraphics[width=\linewidth]{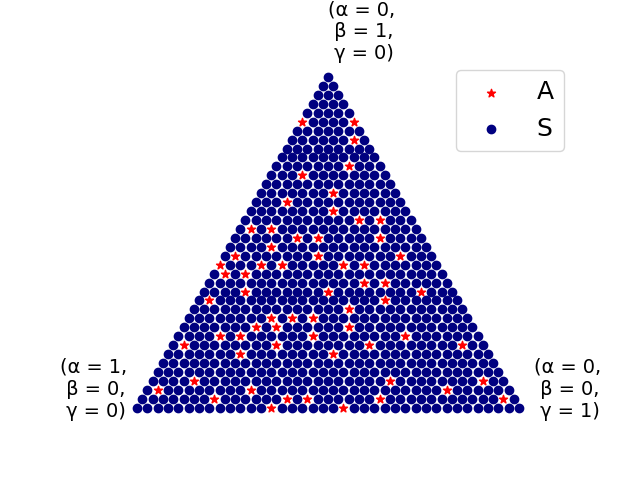}
 \caption[Classification of optimal solutions in comparison to Fielbaum]{Classification of optimal solutions for $n=8, Y=24000, g=1/3, a=0.8$ and $ \mu \approx 0.878$ corresponding to Fielbaum's results}
 \label{fig:typeMapFielbaum}
 \end{subfigure} \hspace{1em}
 \begin{subfigure}[b]{0.46\textwidth}
     	\includegraphics[width=.9\linewidth]{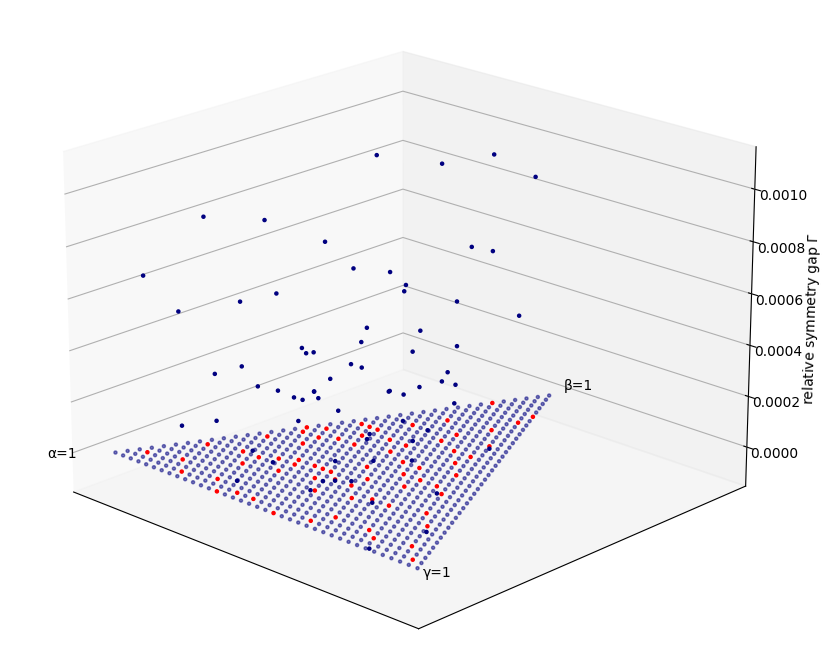}
 \caption[Error of Symmetric Solutions]{Relative gap for $n=8, Y=24000, g=1/3, a=0.8$ and $ \mu \approx 0.878$ corresponding to Fielbaum's results}
 \label{fig:ErrorFielbaum}
 \end{subfigure}
\end{figure}

For this choice of the parameter $\mu,$ asymmetric solutions occur relatively frequently, see, e.g.,  Figure~\ref{fig:typeMapFielbaum} which contains approximately $8.5\%$ of asymmetric optimal line plans. However, the relative symmetry gap is extremely small, the largest gap being less than $0.11\%.$ We conclude that even though symmetry should not be assumed by default, in this particular case, symmetric solutions are close to optimal, and the symmetry assumption of Fielbaum et al.\ is, in this sense, justified.  

\subsection{Computation Time}
\label{subsec:time}

The computation time needed to solve one instance of the Parametric City varies, depending on the input and the model. Table~\ref{tab:computationTimeMILPA} reports the average, minimal, and maximal time needed to compute one of the batches presented above, namely, for the $741$ instances corresponding to Figure~\ref{fig:mu1n8K100}. All computations were performed on a 3.50GHz Intel(R) Xeon(R) CPU E3-1245 v5 machine running Linux. 
All instances could be solved fairly quickly: On average, $ALPP$ was solved in approximately four seconds, but it could take up to $6.25$ minutes. However, this was significantly larger than the time needed to compute the symmetric model.  $ALPP_{\mathcal{S}}$  terminated for all instances in less than half a second. Even if  these computation times are small, the differences are striking: On average, solving $ALPP$ took approximately $145$ times longer than solving $ALPP_{\mathcal{S}}.$ 

\begin{table}[htbp]
	\centering
	\begin{tabular}{|l|  r r |}
		\hline
		& $ALPP$ & $ALPP_{\mathcal{S}} $ \\
		\hline
		Average & 4.022  & 0.028  \\
		Minimum & 0.027  & 0.022  \\
		Maximum & 374.686& 0.039 \\
		\hline
	\end{tabular}
\caption[Computation times of the symmetric problem in comparison to $ALPP$]{Computation times in seconds: symmetric model $ALPP_S$ in comparison to the general model $ALPP$ for $741$ instances of the Parametric City (cf. Figure~\ref{fig:mu1n8K100}) 
}
\label{tab:computationTimeMILPA}
\end{table}

\section{Conclusions}
\label{sec:conclusion}

We investigated the line planning problem in the Parametric City, which combines a versatile generic city planning paradigm with a flexible optimization model allowing the construction of arbitrary lines. Although the Parametric City is inherently rotation symmetric with respect to both shape and demand, it turns out that optimal line plans are not necessarily symmetric. However, the differences between symmetric and asymmetric optimal solutions are rather small in practice, while in theory an approximation algorithm can derived (fixing one of the parameters), building upon the polynomial-time solvability of the line planning problem in Parametric City when line frequencies have to obey rotation symmetry. From a planners' perspective, restricting to symmetric line plans is hence a preeminent starting point to design a transportation network.

\bibliography{references.bib}

\appendix
\section{Appendix}
\label{sec:appendix}

\begin{proof}[Proof of Proposition~\ref{prop:explicitSolutionToSP}]

The cost values correspond to those of the uncapacitated minimum-cost flow problem corresponding to Figure~\ref{fig:shortestpathvalues}. For better readability, let us summarize arc costs of the same type:
\begin{align*}
\bar{c}_{ps} &:= \bar{c}_{(P_i,SC_i)}   = (2\mu/K +(1-\mu)) T \, g,\\
\bar{c}_{sp} &:= \bar{c}_{(SC_i,P_i)}   = (1-\mu) T \, g,\\
\bar{c}_{ss} &:= \bar{c}_{(SC_i,SC_{i\pm 1})}   = (\mu/K + (1-\mu)) T \, r_n,\\
\bar{c}_{sc} &:= \bar{c}_{(SC_i,CD)}  = (2\mu/K + (1-\mu)) T,\\
\bar{c}_{cs} &:=\bar{c}_{(CD, SC_i)}   = (1-\mu) T
\end{align*}
for all  $i\in \ZnZ$.

It is obvious that for an OD pair  $(SC_i, SC_{i \pm j})$ the shortest path is either via the central business district $(SC_i, CD, SC_j)$, which means that it contributes $2 T (\mu/K+(1-\mu))$ to the objective value, or along the subcenter ring -- $(SC_i, SC_{i\pm 1}, \dots, SC_{i\pm j})$ -- with cost $j T r_n (\mu/K +(1-\mu)).$
Thus, for $j \in \{1,\dots, k_n\}$ with $k_n:=\left\lfloor \frac{2}{r_n} \right\rfloor$ the shortest path from $SC_i$ to $SC_{i+j}$ leads over neighboring subcenters, while the rest pass through $CD.$ The same holds for the OD-pairs $(P_i, SC_{i\pm j}).$
An overview of the shortest paths is depicted in Figure~\ref{fig:shortestpaths}, with corresponding costs $\bar{c}_{s \to t}$  in Table~\ref{tab:shortestPath}.

\begin{table}[ht]
    \begin{center}
        \begin{tabular}{|l|l| l|l|}
            \hline
            $s$ & $t$ & shortest path & cost $\bar{c}_{s\to t} $\\
            \hline
            $P_i$ & $SC_i$ & $(P_i,SC_i)$ & $\bar{c}_{ps}$ \\
            $P_i$ & $SC_{i\pm j}, j\in \{1,\dots, k_n\}$ & $(P_i, SC_i, SC_{i\pm 1}, \dots, SC_{i\pm j})$ & $\bar{c}_{ps} + j\, \bar{c}_{ss}$\\
            $P_i$ &  $SC_{i\pm j}, j\in \{k_n,\dots, \lfloor \frac{n}{2} \rfloor\}$ & $(P_i, SC_i, CD, SC_{i\pm j})$ & $\bar{c}_{ps} +  \bar{c}_{cs} + \bar{c}_{ss}$\\
            $P_i$ & $CD$ & $(P_i,SC_i, CD)$ & $\bar{c}_{ps}+\bar{c}_{cs}$ \\
            $SC_i$ & $SC_{i\pm j}, j\in \{1,\dots, k_n\}$ & $(SC_i, SC_{i\pm 1}, \dots, SC_{i\pm j})$ &  $j\, \bar{c}_{ss}$\\
            $SC_i$ &  $SC_{i\pm j}, j\in \{k_n,\dots, \lfloor \frac{n}{2} \rfloor\}$ & $(SC_i, CD, SC_{i\pm j})$ & $\bar{c}_{sc} + \bar{c}_{cs}$\\
            $SC_i$ & $CD$ & $(SC_i, CD)$ & $\bar{c}_{cs}$ \\
            \hline
        \end{tabular}
    \end{center}
    \caption{Shortest paths for each origin-destination pair $(s,t)\in D$}
    \label{tab:shortestPath}
\end{table}

First, consider only the cost of passengers traveling from $SC_0$ to any of the other subcenters:
\begin{align*}
\sum_{j=1}^n \bar{c}_{SC_0\to SC_j} \, d_{SC_0,SC_j}   &= 2 \sum_{j = 1}^{k_n} j \,  \bar{c}_{ss} \, d_{SC_0, SC_j} + \sum_{j = k_n+1}^{n-(k_n+1)} (\bar{c}_{sc}+ \bar{c}_{cs}) \, d_{SC_0, SC_j}  \\
&= 2\frac{k_n(k_n+1)}{2} \bar{c}_{ss} \, d_{SC_0, SC_j} + (n-2k_n+1)  (\bar{c}_{sc}+ \bar{c}_{cs}) \, d_{SC_0, SC_1} \\
& =\left( k_n(k_n+1) \bar{c}_{ss}+ (n-2k_n+1)  (\bar{c}_{sc}+ \bar{c}_{cs})\right) \, d_{SC_0, SC_1}.
\end{align*}

The shortest path costs for passengers starting at one periphery to all foreign subcenters can be calculated similarly. Since each path has to take arc $(P_0, SC_0),$ the term $\bar{c}_{ps}$ needs to be added for each of the $n-1$ paths. Thus we get
\begin{align*}
&\sum_{j=1}^n \bar{c}_{P_0\to SC_j} \, d_{P_0,SC_j} =    \left( k_n(k_n+1) \bar{c}_{ss}+ (n-2k_n+1)  (\bar{c}_{sc}+ \bar{c}_{cs}) + \bar{c}_{ps}(n-1) \right) \, d_{P_0,SC_1}.
\end{align*}
Passengers only start in either a periphery or a subcenter, none originate at the center. Thus one can calculate the explicit costs of a single zone simply by adding these two terms and inserting the values for the demand $d:$
\begin{align*}
\sum_{v \in V}& d_{P_0,v}  \bar{c}_{P_0\to v} + \sum_{v \in V} d_{S_0,v} \bar{c}_{SC_0 \to v}\\
& = T \, \frac{Y}{n} \left(\Big( k_n(k_n+1)r_n+ 2 (n-2k_n+1) \Big) \left(\frac{\mu}{K} + (1-\mu)\right)  \, \left(\frac{a\gamma+(1-a) \tilde{\gamma}}{(n-1)}\right) \right.\\
&\left. \hspace{.5cm} + \left(\frac{2\mu}{K} + (1-\mu)\right)   \left(a\alpha+(1-a)\tilde{\alpha}\right) + \left(\frac{2\mu}{K} + (1-\mu)\right)g \, a \right)\\
&= T\, \frac{Y}{n}\lambda(\alpha, \gamma).
\end{align*}
This implies that the cost for zone $0$ depends linearly on $Y/n$ -- the total amount of people from a single zone.
As the costs are the same for each zone, the explicit optimal value of the shortest path problem must be $n$ times the cost of a single zone, where each passenger takes the shortest path:
\begin{align*}
cost_{UMCFP}(y) &= \sum_{p \in P} \bar{c}_p y_p 
=  T \, Y\lambda(\alpha, \gamma). \qedhere
\end{align*}
\end{proof}

    \begin{proof}[Proof of Lemma \ref{lem:lowerBound}]
    Let $RLPP$ denote the described relaxation of $ALPP,$ i.e.,
   
        \begin{mini!}|s|[2]<b>
        {F,y}{ \sum_{a \in A} \tau_a F_a + \sum_{p \in P} {\tau}_p \textcolor{black}{y_p} = cost_{RLPP}(F,y)}
        {}
        {\textcolor{black} {(RLPP)\quad} \nonumber}
        \addConstraint{\sum_{p \in P_{st}} \textcolor{black}{y_p}}{= d_{s,t} \quad \label{con:demand_sp2}} {\forall (s,t) \in D}
            \addConstraint{\sum_{p \in P_a}{y_p} - K F_a }{\geq 0 \label{con:paths_lb1}\quad}{\forall a \in A }
        \addConstraint{\textcolor{black}{y_p} }{\geq 0 \label{con:paths_lb2}}{\forall p \in P}
        \addConstraint{\textcolor{black}{F_a} }{\geq 0 \label{con:paths_lb3}}{\forall a \in A}
    \end{mini!}

    Suppose we have a solution $(F,y)$ to $RLPP.$ Clearly, $y$ is feasible for $UMCFP$ as well. What is left to show is that $\OptVal(UMCFP) \leq \OptVal(RLPP).$
    We know that $F_{(P_i, SC_i)} = F_{(SC_i, P_i)};$ with Condition~\ref{con:arcCapacity_A} we get
    \begin{equation}
    \sum_{p\in P_{(P_i, SC_i)}}\frac{y_p}{K} \leq F_{(P_i, SC_i)} = F_{(SC_i, P_i)}
    \label{eq:Fps}
    \end{equation}
    for all $i\in \ZnZ.$
    Similarly,  $\sum_{i=0}^{n-1} F_{(SC_i, CD)} = \sum_{i=0}^{n-1} F_{(CD,SC_i)}$ has to hold. This results in
    \begin{equation}
    \max\left\{\sum_{i=0}^{n-1} \sum_{p\in P_{(SC_i, CD)}} \frac{y_p}{K}, \sum_{i=0}^{n-1} \sum_{p\in P_{(CD, SC_i)}} \frac{y_p}{K} \right\}\leq \sum_{i=0}^{n-1} F_{(SC_i, CD)} =  \sum_{i=0}^{n-1} F_{(CD, SC_i)}
    \label{eq:max}
    \end{equation}
    in order to fulfill the capacity constraints.
    
   
    For the subcenter arc-frequencies, we obtain analogously:
    \begin{equation}
    \sum_{p\in P_{(SC_{i}, SC_{i\pm 1})}}\frac{y_p}{K} \leq F_{(SC_i, SC_{i \pm 1})}. \label{eq:Fss}   
    \end{equation}
    Note, however, that both directions need to be considered separately.
   
    By applying inequalities~\eqref{eq:Fps}, \eqref{eq:max} and \eqref{eq:Fss} to each of the corresponding arc-frequencies, as well as inserting the values of the arc-costs, we can estimate $\sum_{a \in A} \tau_a F_a$ as
    \begin{align*}
    \sum_{a \in A} \tau_a  F_a
    &\geq
    2\,T\,g\sum_{i = 0}^{n-1} \sum_{p\in P_{(P_i, SC_i)}} \frac{y_p}{K} 
    + 2 \, T\sum_{i = 0}^{n-1}  \sum_{p\in P_{(SC_i, CD)}} \frac{y_p}{K} \\
    &+ T \, r_n \sum_{i = 0}^{n-1} \sum_{p \in P_{(SC_{(i+1)},SC_i)}} \frac{y_p}{K}
    + T \, r_n \sum_{i = 0}^{n-1} \sum_{p \in P_{(SC_{(i-1)},SC_i)}} \frac{y_p}{K} .
    \end{align*}
   
    By plugging this into the cost function of $RLPP$ and some simple rearrangement, we have
    {$$cost_{RLPP}(F,y) = \mu \sum_{a \in A} \tau_a F_a + (1-\mu) \sum_{p \in P} \tau_p y_p = \sum_{a \in A} \sum_{p\in P_a} \bar{c}_a y_p = \sum_{p\in P} \bar{c}_p y_p = cost_{UMCFP}(y),$$ \nopagebreak
        from which the claim follows.}
\end{proof}

    \begin{proof}[Proof of Corollary \ref{cor:boundSP}]
    Consider the formula for the optimal value of the uncapacitated mini\-mum-cost flow problem $\OptVal(UMCFP) = TY\lambda(\alpha,\gamma) $ from Proposition~\ref{prop:explicitSolutionToSP}.  We will analyze the term
    $$\frac{k_n(k_n+1)r_n +2(n-2k_n+1)}{(n-1)}$$ and
    show that $\frac{k_n(k_n+1)r_n}{(n-1)} \in [\frac{2}{\pi}, 2]$ and
    $\frac{n-2k_n+1}{n-1} \in [1-\frac{2}{\pi}, 1]$ for all $n\geq4:$
    As $r_n T$ is the distance of two subcenters and $k_n =\left \lfloor \frac{2}{r_n}\right\rfloor,$ i.e., the largest number such that the length of $k_n$ subcenter-arcs is shorter than two central arcs, we have $k_n r_n \leq 2$ and $(k_n+1) r_n >2.$ Further, we have the bound
    $r_n n \leq 2 \pi,$ because $r_n$ is the side length of an $n$-gon with vertices on the unit circle, thus the perimeter is less than $2 \pi.$  By a similar argument we can inscribe a circle of radius $\cos\left(\frac{\pi}{n}\right)$ within that regular $n$-gon such that the sides of the polygon touch the circle tangentially.
    The perimeter of the $n$-gon must therefore be larger than the circumference of the inscribed circle. As the radius of the inscribed circle increases with $n,$ its minimum is at $n=4,$ thus we have $$n r_n >  2 \pi \cos\left(\frac{\pi}{n}\right)\geq  2 \pi \cos\left(\frac{\pi}{4}\right) \geq \sqrt{2} \pi.$$
    Further, we use the fact that $n-2k_n +1 \leq n-1$ and $k_n +1 \leq n-1.$
    Therefore, for $n\geq 4$ we have the three estimates
    \begin{align*}
    \frac{k_n(k_n+1)r_n}{n-1} &\leq \frac{2(k_n+1)}{n-1} \leq{2},\\
    \frac{k_n(k_n+1)r_n}{n-1} &\geq \frac{2k_n}{n-1} \geq \frac{2k_n}{2\pi/r_{n-1}} \geq \frac{r_{n-1} k_n}{\pi} \geq \frac{r_{n-1} k_{n-1}}{\pi} \geq \frac{2}{\pi},\\
    \frac{n-2k_n+1}{n-1} &\leq 1.
    \end{align*}
    For the lower bound on $\frac{n-2k_n+1}{n-1}$ we need to examine the term in more detail. We can derive the following inequality by making use of the property $n r_n >  2 \pi \cos\left(\frac{\pi}{n}\right):$
    \begin{align*}
    \frac{n-2k_n+1}{n-1} &\geq \frac{n-2k_n+1}{n}
    = \frac{n - 2 \left\lfloor \frac{2}{r_n} \right\rfloor +1}{n} \geq 1 - \frac{4}{r_n \, n} + \frac{1}{n} \geq 1-\frac{2}{\pi \cos(\frac{\pi}{n})} +\frac{1}{n}.
    \end{align*}
    For the last term, define the help function $h:[4,\infty[ \rightarrow \mathbb{R}$ with $h(x) =  1-\frac{2}{\pi \cos(\frac{\pi}{x})} +\frac{1}{x}.$ It is possible to show that $h$ is monotonically decreasing on the interval $[8, \infty[:$ The function is differentiable with the first derivative
    \begin{align*}
    \dfrac{d}{dx} h(x) &= \dfrac{1}{x^2} \left( \dfrac{2\sin\left(\frac{{\pi}}{x}\right)}{\cos^2\left(\frac{{\pi}}{x}\right)}-1\right).
    \end{align*}
    On the interval $[8, \infty[,$ both $\sin\left(\frac{{\pi}}{x}\right)$ and $\cos\left(\frac{\pi}{x}\right)$ are positive; the former is monotonically decreasing, while the latter (and thus $\cos^2(\frac{\pi}{x})$ as well) is  monotonically increasing. Hence, the following holds true
    $$\dfrac{2\sin\left(\frac{{\pi}}{x}\right)}{\cos^2\left(\frac{{\pi}}{x}\right)}  < \dfrac{2\sin\left(\frac{{\pi}}{8}\right)}{\cos^2\left(\frac{{\pi}}{8}\right)} = \dfrac{2 \sqrt{2 - \sqrt{2}}}{2+\sqrt{2}} \approx 0.448 < 1$$
    for all $x\geq 8.$ This implies that on the interval $[8, \infty[,$   $h$ is monotonically decreasing since its derivative is negative.  It is clear that the limit exists with $$\lim_{x \to \infty} h(x) = 1-\dfrac{2}{\pi}.$$
    %
    %
    %
    Thus we have $\frac{n-2k_n+1}{n-1}  \geq 1-\frac{2}{\pi}$ for all $ n \in \mathbb{N}_{\geq8}.$
    For $n \in \{4,5,6,7\}$ the bound can be verified easily by plugging in the values $k_4=k_5 = 1$ and $k_6 = k_7 = 2.$
    Together, the following inequality holds for all $n \in\mathbb{N}_{\geq 4}:$
    $$2-\frac{2}{\pi} \leq \frac{k_n(k_n+1)r_n + 2(n-2k_n+1)}{n-1} \leq 4 .$$
    Finally, we can estimate $\lambda(\alpha, \gamma)$ by making use of the properties $\alpha+\beta+\gamma=1$ and $\tilde{\alpha} + \tilde{\gamma} = 1:$
    \begin{align*}
    \lambda(\alpha,\gamma) &\leq  4 \left(\frac{\mu}{K} + (1-\mu)\right)  \, \left({a\gamma+(1-a) \tilde{\gamma}}\right) \\
    &\hspace{.5cm} + \left(\frac{2\mu}{K} + (1-\mu)\right)   \left(a\alpha+(1-a)\tilde{\alpha}\right) + \left(\frac{2\mu}{K} + (1-\mu)\right)ga \\
    &\leq 4 \left(\frac{2\mu}{K} + (1-\mu)\right) (1+ga)\\
    \end{align*}and
    \begin{align*}
    \lambda(\alpha,\gamma)&\geq \left(2-\frac{2}{\pi} \right) \left(\frac{\mu}{K} + (1-\mu)\right)  \, \left({a\gamma+(1-a) \tilde{\gamma}}\right) \\
    &\hspace{.5cm} + \left(\frac{2\mu}{K} + (1-\mu)\right)   \left(a\alpha+(1-a)\tilde{\alpha}\right) + \left(\frac{2\mu}{K} + (1-\mu)\right)g a \\
    &\geq  \left( \left(2-\frac{2}{\pi} \right)\frac{\mu}{K} + (1-\mu)\right) (1+ga-a),\\
    \end{align*}
    from which the claim follows.
\end{proof}   

\newpage
\section{List of Symbols}
\label{sec:listOfSym}
 \addcontentsline{toc}{chapter}{List of Symbols}
\markboth{List of Symbols}{List of Symbols}
\begin{longtabu} to \linewidth {p{0.14\linewidth} p{0.7\linewidth} p{0.06\linewidth}}
	\textbf{Symbol} &  \textbf{Description} & \textbf{Page}  \\
	$A$ & set of arcs in the Parametric City & \pageref{ass:A} \\
	$a$ & fraction of people starting in the peripheries & \pageref{ass:a}\\
		$ALPP$ & arc-based line planning problem & \pageref{MILP:A} \\
	$CD$ & central business district  & \pageref{ass:CD} \\
	$\tau_a$, $\tau_l$, $\tau_p$ & cost of/time to travel along arc $a$, line $l$, or path $p$ & \pageref{ass:tau},\pageref{ass:tauL},\pageref{ass:tauP}\\
	$\bar{c}_a, \bar{c}_p$ & cost parameters of $UMCFP$ & \pageref{UMCFP}\\
	$d_{s,t}$ & demand from node $s$ to $t$ & \pageref{ass:d}\\
	$F$ & frequency variables of arcs of $ALPP$  & \pageref{ass:F}\\
	$F_{C}$ & frequency variable of $ALPP_{\mathcal{S}}$ corresp. to central arcs & \pageref{ass:FCSP} \\
	$F_{P}$ & frequency variable of $ALPP_{\mathcal{S}}$ corresp. to peripheral arcs & \pageref{ass:FCSP}\\
	$F_{S+}$ & frequency variable of $ALPP_{\mathcal{S}}$ corresp. to arcs $(SC_j, SC_{j+1})$& \pageref{ass:FCSP} \\
	$F_{S-}$ & frequency variable of $ALPP_{\mathcal{S}}$ corresp. to arcs $(SC_j, SC_{j-1})$ & \pageref{ass:FCSP}\\
	$f$ & frequency variables of lines of $LPP$ & \pageref{ass:f}\\
	$\mathcal{G} = (V,A)$ & graph of the Parametric City &  \pageref{ass:G} \\
	$g$ & factor for distance between $SC_j$ and $P_j$ & \pageref{ass:g} \\
	$K$ & vehicle capacity for number of passengers &  \pageref{ass:K} \\
	$k_n$ & largest value s.t. the shortest $SC_0$-$SC_{k_n}$-path is along subcenters& \pageref{prop:explicitSolutionToSP} \\
	$L$ & set of lines corresponding to directed circuits & \pageref{ass:L} \\
	$L_{a} $ & set of lines using arc $a$ &  \pageref{ass:La} \\
	$LPP$ & line-based problem & \pageref{MILP} \\
	$n$ & number of zones in the Parametric City & \pageref{ass:n} \\
	$\OptVal(P)$ & optimal value of problem $P$ & \pageref{ass:OptVal} \\
	$P$ & set of (passenger) paths corresponding to directed simple paths & \pageref{ass:P} \\
	$P_a$ & set of paths using arc $a$ & \pageref{ass:Pa} \\
	$P_{s\to t}$ & set of paths with origin $s$ and destination $t$ & \pageref{ass:Pst} \\
	$P_j$ & periphery of zone $j$ & \pageref{ass:Pi} \\
	$r_n$ & factor for distance between two subcenters & \pageref{ass:r} \\
	$SC_j$ & subcenter of zone $j$ & \pageref{ass:S}\\
	$T$ & distance between $CD$ and a subcenter & \pageref{ass:T} \\
	$UMCFP$ & uncapacitated minimum-cost flow problem as relaxation of $ALPP$ & \pageref{UMCFP} \\
	$V$ & set of nodes in the Parametric City & \pageref{ass:A} \\
	$Y$ & total patronage & \pageref{ass:Y}\\
	$y$ & passenger variables of $LPP, ALPP, ALPP_{\mathcal{S}}, UMCFP$ & \pageref{ass:y}\\
	$\alpha$ & fraction of people traveling from $P_j$ to $CD$ & \pageref{ass:alphaBetaGamma}\\
	$\tilde{\alpha}$ & fraction of people traveling from $SC_j$ to $CD$ & \pageref{ass:tildeAlpha}\\
	$\beta$ & fraction of people traveling from $P_j$ to $SC_j$& \pageref{ass:alphaBetaGamma}\\
	$\gamma$ & fraction of people traveling from $P_j$ to other subcenters& \pageref{ass:alphaBetaGamma}\\
	$\tilde{\gamma}$ & fraction of people traveling from $SC_j$ to other subcenters& \pageref{ass:tildeGamma}\\
	$\delta^{+}(v), \delta^-(v)$ & set of outgoing, incoming arcs at node $v$ & \pageref{ass:delta} \\
	$\Lambda$ & frequency capacity for arcs & \pageref{ass:Lambda}\\
	$\lambda(\alpha,\gamma)$ & scaled optimal value of $UMCFP$ &\pageref{prop:explicitSolutionToSP}\\
	$\mu$ & factor to weigh operator against user costs& \pageref{ass:mu}\\
	&  $\mu = 0$  means no operator costs, $\mu = 1$ no user costs&\\
	$\rho_z(x)$ & rotation of vertex tuple $x$ by $z \, 2 \pi/n $ & \pageref{ass:rotation}\\
	$a\in p, a\in l$ & path $p$, line $l$ uses arc $a$ & \pageref{ass:in} \\
\end{longtabu}


\end{document}